\newtheorem{theorem}{Theorem}[section]
\newtheorem{problem}[theorem]{Problem}
\newtheorem{lemma}[theorem]{Lemma}
\newtheorem{proposition}[theorem]{Proposition}
\newtheorem{corollary}[theorem]{Corollary}
\newtheorem{definition}[theorem]{Definition}
\theoremstyle{definition}
\newtheorem{remark}[theorem]{Remark}
\newtheorem{example}[theorem]{Example}
\newcommand{\CC}{\mathbb{C}}
\newcommand{\C}{\Phi}
\newcommand{\R}{\mathbb R}
\newcommand{\D}{\Psi}
\newcommand{\e}{\varepsilon}
\title[On Pseudo-Spectral Factorization
over Complex Numbers and Quaternions]{On Pseudo-Spectral Factorization
over the Complex Numbers and Quaternions}
\author[D. Alpay]{Daniel Alpay}
\address{(DA)
Faculty of Mathematics, Physics, and Computation\\
Schmid College of Science and Technology\\
Chapman University\\
One University Drive
Orange, California 92866\\
USA}
\email{alpay@chapman.edu}
\author[F. Colombo]{Fabrizio Colombo}
\address{(FC) Politecnico di
Milano\\Dipartimento di Matematica\\Via E. Bonardi, 9\\20133 Milano,
Italy}
\email{fabrizio.colombo@polimi.it}
\author[I. Lewkowicz]{Izchak Lewkowicz}
\address{(IL) School of electrical and computer engineering
Ben-Gurion University of the Negev\\ P.O.B. 653\\ Beer-Sheva, 84105\\
Israel}
\email{izchak@ee.bgu.ac.il}
\author[I. Sabadini]{Irene Sabadini}
\address{(IS) Politecnico di
Milano\\Dipartimento di Matematica\\Via E. Bonardi, 9\\20133 Milano\\Italy}
\email{irene.sabadini@polimi.it}
\begin{document}
\maketitle
\begin{abstract}
This paper is a continuation of the research of our previous work \cite{MR3904447} and considers quaternionic generalized Carath\'eodory functions and the related family of generalized positive
functions.  It is addressed to a wide audience which includes researchers in complex and hypercomplex analysis, in the theory of linear systems, but also electric engineers. For this reason it includes some results on generalized Carath\'eodory functions and their factorization in the classic complex case which might be of independent interest. An important new result is a pseudo-spectral factorization and we  also discuss some interpolation problems in the class of quaternionic generalized positive functions.

\end{abstract}

\noindent AMS Classification. Primary: 30G35, 32E30, 47A68. Secondary: 93B15, 93C05.

\noindent Keywords: generalized Carath\'eodory functions, generalized positive functions.

\noindent {\em }
\date{today}
\tableofcontents
\section{Introduction}
\setcounter{equation}{0}
This paper is a continuation of our previous work \cite{MR3904447}. We study in the quaternionic
setting generalized Carath\'eodory functions and the related (but slightly different) family of generalized positive
functions. The notion of rational function was recently extended to the case of slice-hyperholomorphic functions, see \cite{MR3568012,acs1,MR3127378,acs_trend_1,MR3192300,zbMATH06658818},
and part of the classical theory has already been considered in
this setting. In \cite{MR3904447} we considered the generalized positive real lemma, which gives a characterization of generalized positive functions in terms of their minimal realizations.
Building on \cite{MR3904447}, we here prove a factorization result in the class of even positive functions, denoted by $\mathcal{GPE}$, in the setting of rational slice-hyperholomorphic functions.
\smallskip

In scalar terminology, positive functions, analytic on the right half plane $\mathbb C_r$, analytically map it to its closure ($\mathbb C_r\bigcup i\mathbb R$).
Generalized positive functions are functions of bounded type in $\mathbb C_r$ and map (in the sense of boundary values) $i\mathbb R$ to $\mathbb C_r$:
\begin{equation}
\label{realpos}
{\rm Re}\,\C(iy)\ge 0
\end{equation}
for all real $y$ where the boundary value $\C(iy)$ is defined.
In electrical engineering and mathematical analysis these functions are analytically extended to
the open left half plane using an integral formula; see \cite{MR48:904,Ca1,Ca2,richards,MR0132753}. When the function we start with is scalar-valued and
real (in the sense that $\overline{\C(\overline{z})}=\C(z)$), the extension is then odd  as is usually the case in electrical engineering. In the present work, we consider rational functions and will not extend them in such a
way, but rather consider them as meromorphic functions in $\mathbb C$. We will denote by $\mathcal{GP}$ the space of (in general matrix-valued) generalized positive functions.\smallskip

We set

\begin{equation}
\C^\sharp(z)=(\C(-\overline{z}))^*.
\label{notthesharpest}
\end{equation}
It is useful to remark that
\begin{equation}
z+\overline{z}=0\,\,\,\Longrightarrow\,\,\, \C(z)^\sharp=\C(z)^*.
\end{equation}
Before presenting our result, we recall a number of definitions. A rational $\mathbb C^{n\times n}$-valued function $\C$ is called odd if
\begin{equation}
\label{oddd}
\C^\sharp(z)=-\C(z)
\end{equation}
and even if
\begin{equation}
\label{evennn}
\C^\sharp(z)=\C(z).
\end{equation}
In particular, in the scalar case, when the coefficients and the variable are real, we get to the usual definitions of odd and even functions.
When $\C$ is even, condition \eqref{realpos} becomes
\[
\C(iy)\ge 0
\]
i.e. $\C$ takes positive (and in particular self-adjoint) values on the imaginary line ($\C\in\mathcal{GPE}$).
Let now $L$ be a ${\mathbb C}^{n\times m}$-valued rational function (we allow $n\not =m$). Clearly
\begin{equation}
\label{rotoro}
\C(z)=L^\sharp(z)L(z)
\end{equation}
is a rational generalized positive function. It is natural to pose the converse question: Does every even generalized positive definite function admits a
factorization of the form \eqref{rotoro}? An answer to this question is given in \cite{MR0132753} using polynomial methods and
\cite[Theorem 10.2, p. 199]{MR2663312} using state space theory. We also refer to \cite{MR3207131}.
The result as presented in \cite{MR2663312} is recalled in Section \ref{sec2!!!!}, see Theorem
\ref{ot200}, and we only outline it in the present introduction. We refer to Section \ref{sec2!!!!} for the definition of a pseudo-spectral factorization.
Assuming the rational generalized positive function $\C$ analytic at infinity and such that $\C(\infty)>0$ (and in particular, $\det\C(z)\not\equiv 0$), $\C$ admits
uniquely defined right and left pseudo-spectral factorizations
\[
\C(z)=L_+^\sharp(z) L_+(z)=L_-^\sharp(z) L_-(z).
\]
Moreover explicit formulas are available for the pseudo-spectral factors $L_+$ and $L_-$ in terms of a minimal realization of $\C$. See
formulas \eqref{corona123}-\eqref{mainform2}. These formulas play a key role in our proof.\\

Before turning to the quaternionic setting we need to recall the following: A $\mathbb C^{n\times n}$-valued rational function $\C$ is a generalized positive
function if and only if the kernel
\begin{equation}
\label{KPHI}
K_\C(z,w)=\frac{\C(z)+\C(w)^*}{z+\overline{w}}
\end{equation}
has a finite number, say $\kappa$, of negative squares in the domain of analyticity of $\C$ in $\mathbb C_r$. See Section \ref{GCF} for a discussion and for
the definition of a kernel having a finite number of negative squares.\\

The purpose of this paper is on one hand to prove the counterpart of the above factorization and realization
result for even rational slice-hyperholomorphic functions, which are generalized positive in a suitable sense and on the other hand to prove some interpolation results.
In the sequel the symbol
$\star$ denotes the star product of (left) hyperholomorphic functions;
we send the reader to Section \ref{sec4} for more information on the terminology and the notation. In particular, $K_\Phi(p,q)$ defined by \eqref{CCCC} is now the
quaternionic counterpart of the kernel \eqref{KPHI}. Extending \eqref{notthesharpest} to the quaternionic setting we define:

\begin{definition}
Let $\C(p)=\sum_{k=0}^\infty p^k\C_k$. We set
\begin{equation}
\C(p)^\sharp=\sum_{k=0}^\infty (-p)^k\C_k^*.
\label{notsosharp}
\end{equation}
The function $\C$ is called even if
\begin{equation}
\label{PR1}
\C^\sharp(p)=\C(p).
\end{equation}
\end{definition}

We remark that, in opposition to the complex case, we will in general have $\C^\sharp(p)\not=\C(p)^*$ when $p+\overline{p}=0$.\smallskip

To define generalized positive functions in the quaternionic case we need to resort to kernels with a finite number of negative squares.

\begin{definition}
The $\mathbb H^{n\times n}$-valued slice-hyperholomorphic rational function is called generalized positive even if it is even and if the kernel $K_\C(p,q)$ defined by
\begin{equation}
\label{CCCC}
(\C(p)+\C(q)^*)\star(p+\overline{q})^{-\star}
\end{equation}
has a finite number of negative squares in the open half-space, from which are removed the spheres of poles of $\C$.
\end{definition}

We will use the notation $\C\in\mathcal {GPE}(\mathbb H)$ for quaternionic generalized positive even functions.

\begin{theorem}
Let $\C$ be a $\mathbb H^{n\times n}$-valued slice-hyperholomorphic rational function belonging to $\mathcal {GPE}(\mathbb H)$,
slice hyperpolomorphic at infinity with value $I_n$ there, i.e.
\[
\lim_{p\rightarrow\infty}\C(p)=I_n,
\]
and  with minimal realization
\begin{equation}
\C(p)=I_n+C\star(pI_N-A)^{-\star}\star B.
\label{finfin}
\end{equation}
Then, there exist $\mathbb H^{n\times n}$-valued slice-hyperholomorphic rational functions $L_+$ and $L_-$,  respectively, without poles and zeros in the open right
half-space and in the open left half-space, uniquely determined by the condition $L_\pm(\infty)=I_n$, and such that
\begin{equation}
\label{ronron}
\C(p)=L_+^\sharp (p)\star L_+(p)=L_-^\sharp (p)\star L_-(p),
\end{equation}
where the $\sharp$ is defined in \eqref{notsosharp}.
\label{even}
\end{theorem}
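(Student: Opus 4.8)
The strategy is to transfer the known complex pseudo-spectral factorization (Theorem \ref{ot200}) to the slice-hyperholomorphic setting by restricting everything to a slice $\mathbb{C}_I$ and then using the Representation Formula to patch the slice data back into a genuinely slice-hyperholomorphic function. First I would fix an imaginary unit $I\in\mathbb{S}$ and consider the restriction $\C_I(z)$ of $\C$ to the plane $\mathbb{C}_I\cong\mathbb{C}$. Because $\C$ is rational, slice-hyperholomorphic at infinity with $\C(\infty)=I_n$, and admits the minimal realization \eqref{finfin}, its restriction $\C_I$ is a classical rational $\mathbb{C}^{n\times n}$-valued (over $\mathbb{C}_I$) function analytic at infinity with value $I_n$, and the $\star$-products in \eqref{finfin} and \eqref{CCCC} become ordinary products on the slice. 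The even condition \eqref{PR1} restricts to \eqref{evennn}, and the negative-squares condition on $K_\C$ restricts to the finiteness of negative squares of the classical kernel \eqref{KPHI}; hence $\C_I\in\mathcal{GPE}$ in the complex sense, and $\det\C_I(z)\not\equiv 0$ since $\C_I(\infty)=I_n>0$. Then Theorem \ref{ot200} applies on the slice and produces unique complex pseudo-spectral factors $L_{+,I}$ and $L_{-,I}$, given explicitly by the realization formulas \eqref{corona123}--\eqref{mainform2} in terms of the restriction of $(A,B,C)$, normalized by $L_{\pm,I}(\infty)=I_n$, with $L_{+,I}$ having no poles/zeros in the open right half of $\mathbb{C}_I$ and $L_{-,I}$ none in the open left half, and $\C_I=L_{\pm,I}^\sharp L_{\pm,I}$ on $\mathbb{C}_I$.

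**Reconstruction step.** The next step is to show that $L_{\pm,I}$ is the restriction to $\mathbb{C}_I$ of a (necessarily unique) slice-hyperholomorphic rational function $L_\pm$ on $\mathbb{H}$. The clean way is to observe that the explicit formulas \eqref{corona123}--\eqref{mainform2} express $L_{+,I}$ through a state-space realization whose matrices are built from $A,B,C$ together with the solution of an associated Riccati/Lyapunov-type equation; since those matrices live in $\mathbb{C}^{N\times N}$ etc. and do not depend on the choice of slice (they come from the quaternionic realization of $\C$), I would define $L_+(p):=I_n+C_+\star(pI_{N}-A_+)^{-\star}\star B_+$ with exactly these matrices and check that it is slice-hyperholomorphic, rational, satisfies $L_+(\infty)=I_n$, and restricts to $L_{+,I}$ on $\mathbb{C}_I$. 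Alternatively, one invokes the Representation Formula for slice-hyperholomorphic functions: a function defined and holomorphic on one slice in the intrinsic/well-behaved way extends uniquely, and because $L_{+,I}$ is rational with real-structure compatible coefficients coming from a quaternionic realization, the extension is slice-hyperholomorphic. The location of poles and zeros of $L_\pm$ is governed by the eigenvalues of $A_\pm$ and of the associated matrix, which are the same as the axially symmetric completions of the poles/zeros of $L_{\pm,I}$; hence $L_+$ has no poles and no zeros in the open right half-space and $L_-$ none in the open left half-space.

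**Verifying the factorization and uniqueness.** Having $L_\pm$, I would verify \eqref{ronron}. On the slice $\mathbb{C}_I$ the identity $\C_I=L_{\pm,I}^\sharp L_{\pm,I}$ holds by Theorem \ref{ot200}; I must check that $\sharp$ and $\star$ on $\mathbb{H}$ restrict on $\mathbb{C}_I$ to $\sharp$ and ordinary product, which follows directly from the definition \eqref{notsosharp} (the coefficient-wise conjugation and sign alternation) and the fact that $\star$-products restrict to pointwise products on a slice. Thus $\C(p)$ and $L_\pm^\sharp(p)\star L_\pm(p)$ are two slice-hyperholomorphic rational functions agreeing on a whole slice $\mathbb{C}_I$, hence equal everywhere by the Identity Principle for slice-hyperholomorphic functions. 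Uniqueness subject to $L_\pm(\infty)=I_n$ follows the same way: any two candidates restrict to complex pseudo-spectral factors on each slice, the complex factorization is unique by Theorem \ref{ot200}, so the restrictions coincide on $\mathbb{C}_I$, and the Identity Principle gives equality on $\mathbb{H}$.

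**Main obstacle.** The delicate point is not the algebra of $\star$ and $\sharp$ on a slice, which is routine, but ensuring that the classical pseudo-spectral factors obtained slice-by-slice actually assemble into a single slice-hyperholomorphic function — equivalently, that the explicit realization matrices for $L_{\pm,I}$ produced by \eqref{corona123}--\eqref{mainform2} are independent of $I$ and define a legitimate slice-hyperholomorphic realization. This requires using the \emph{quaternionic} minimality of \eqref{finfin}: I need that the restriction $(A,B,C)$ on a slice is a minimal \emph{complex} realization of $\C_I$ (so that the uniqueness clause of Theorem \ref{ot200} pins down $L_{\pm,I}$ exactly through these matrices and not merely up to similarity), and that the auxiliary matrices (the stabilizing solution of the relevant Riccati equation, the spectral splitting into right/left-half-plane parts) are computed by formulas that are manifestly the restriction of quaternionic data. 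Handling the possible spheres of poles/zeros of $\C$ on the imaginary axis — which in the complex picture are the branch points of the pseudo-spectral factorization where one passes from "spectral" to "pseudo-spectral" — is the technical core, and I expect the bulk of the work to be verifying that these behave compatibly across slices and do not obstruct the slice-hyperholomorphic extension.
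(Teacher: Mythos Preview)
Your plan has a genuine gap at the very first move. Restricting $\C$ to a slice $\mathbb{C}_I$ does \emph{not} produce a $\mathbb{C}^{n\times n}$-valued function: the realization matrices $A,B,C$ in \eqref{finfin} are quaternionic, so for $z\in\mathbb{C}_I$ the value $\C(z)=I_n+C(zI_N-A)^{-1}B$ still lies in $\mathbb{H}^{n\times n}$, and Theorem~\ref{ot200} simply does not apply to it. Relatedly, your claim that ``$\star$-products restrict to pointwise products on a slice'' is false in general: for $f(p)=\sum p^kF_k$ and $g(p)=\sum p^kG_k$ with quaternionic $F_k,G_k$, one has $(f\star g)(z)=\sum_n z^n\sum_k F_kG_{n-k}$, whereas $f(z)g(z)=\sum_{k,l} z^kF_kz^lG_l$, and these differ because $F_k$ need not commute with $z^l$. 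The only place where $\star$ collapses to pointwise product is on the real axis. The same obstruction hits your handling of $\sharp$.

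The paper avoids exactly this by inserting the map $\chi$: one restricts to $x\in\mathbb{R}$ (where $\star$ is pointwise), applies $\chi$ to get a genuine $\mathbb{C}^{2n\times 2n}$-valued rational function $R(x)=\chi(\C(x))$, and only then extends analytically to $\mathbb{C}_i$. Theorem~\ref{ot200} is then applied to $R$. The substantive work---and what your outline misses entirely---is proving that the unique normalized pseudo-spectral factor $L_+$ of $R$ inherits the symmetry $E_n^{-1}\overline{L_+(\bar z)}E_n=L_+(z)$ that characterizes the range of $\chi$ (equation \eqref{Isym}). This is done by exhibiting a similarity $S$ with $S\overline{S}=-I_{2N}$ relating the two realizations of $R$, tracking how $S$ transforms the invariant subspaces $\mathcal{M}_\pm,\mathcal{M}_\pm^\times$ and the projection $\Pi_+$ in \eqref{corona123}, and invoking uniqueness of the factor. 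Only after $L_+$ is shown to lie in ${\rm ran}\,\chi$ can one invert $\chi$ and extend from $\mathbb{C}_i$ to $\mathbb{H}$. Your proposal attempts to bypass the $\chi$ mechanism and the symmetry argument, but without them there is no complex factorization to quote.
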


To prove this theorem we use the map $\chi$ (see Definition \ref{chi} below) which allows to consider the complex-valued setting, and use an analytic extension argument from \cite{MR3904447}
and the formulas from \cite[Theorem 10.2, p. 199]{MR2663312}.\\

We note that the family of generalized positive functions forms a convex invertible cone (CIC), both in the classical and quaternionic setting (one needs to consider
the $\star$-product in the latter case), i.e. a convex cone for which invertible elements are still in the cone; see \cite{CohenLew1997a,CohenLew1997b,LewRodYar2005}.

\begin{remark}
{\rm This paper is written for more than one audience, in particular researchers from the fields of theory of linear systems, hypercomplex analysis and electrical engineering. We did not try to be self-complete
(that would be impossible in the setting of a paper), but we have recalled a number of facts which may be well-known to one of the aimed audiences, and not to the other ones. These various groups may have different terminologies, for instance what we call in this paper positive is sometimes called semi-positive.
We hope it is clear to all potential readers.
The theory of rational slice-hyperholomorphic functions is relatively recent, and we review for the benefit of the readers from
hypercomplex analysis some known results in the classical case, such that Proposition \ref{WW*}.}
\end{remark}

The paper consists of 6 sections besides the introduction, and we now review its content.\\
In section 2 besides to provide some preliminary  notions on matrix valued rational functions, their realizations, we discuss various important facts like the generalized positive lemma and pseudo-spectral factorizations.
In section 3 we give a number of interpolation results which are consequences of the results in section 2 and we also show various examples. Section 4 deals with generalized Carath\'eodory functions and their factorization, also discussing some examples. Section 5 moves to the quaternionic case and contains some preliminary facts, whereas section 6 contains the proof of Theorem 1.3 in which we prove a factorization result in the class of even rational slice-hyperholomorphic functions and two corollaries. Finally, in section 7 we discuss some interpolation problems in the class of quaternionic generalized positive functions.

\section{The rational case}
\setcounter{equation}{0}
\label{sec2!!!!}
In the discussion, and also in later sections, we will use kernels with a finite number of negative squares, first introduced by Krein; see \cite{krein-1959}.
We refer to the paper \cite[\S 9]{stewart} for an historical survey  of the notion.

\begin{definition}
\label{co}
Let $K(z,w)$ be a $\mathbb C^{n\times n}$-valued function (also called {\sl kernel} defined for $z,w$ in some set $\Omega$. We say that $K(z,w)$ has a finite number,
say $\kappa$, of negative squares in $\Omega$ if it is Hermitian:
\begin{equation}
\label{krein}
K(z,w)=K(w,z)^*,\quad\forall z,w\in\Omega,
\end{equation}
for every choice of $N\in \mathbb N$, $z_1,\ldots, z_N\in\Omega$ and $c_1,\ldots, c_N\in\mathbb C^n$, the $N\times N$ matrix with $(j,k)$ entry
$c_j^*K(z_j,z_k)c_k$ (which is Hermitian in view of \eqref{krein}) has at most $\kappa$ strictly negative eigenvalues, and exactly $\kappa$ strictly negative
eigenvalues for some choice of $N,z_1,\ldots, z_N$ and $c_1,\ldots, c_N$.
\end{definition}

\begin{remark}{\rm
When $\kappa=0$ the notion reduces to the notion of positive definite function (or kernel). Since the spectral theorem holds for quaternionic
Hermitian matrices (see e.g. \cite{MR97h:15020}), the definition still makes sense in the quaternionic setting.}
\label{negsq}
\end{remark}

We begin by recalling the concept of state space realization. Let
$\Phi(z)$ be a $n\times n$-valued rational function analytic at
infinity, i.e.  $\lim\limits_{z~\rightarrow~\infty}\Phi(z)~$ exists.  Then, $\Phi(z)$ admits a state space realization
\begin{equation}
\label{minPhi}
\begin{matrix}
\Phi(z)=D+C(zI_N-A)^{-1}B&~&~&
R_{\Phi}:=\left(
\begin{array}{c|c}
A&~B\\
\hline
C&~D
\end{array}\right)\end{matrix}
\end{equation}
with $~A\in\CC^{N\times N}$,
\mbox{$B, C^*\in\CC^{N\times n}$} ~and $~D\in\CC^{n\times n}$, namely,
\mbox{$R_{\Phi}\in\CC^{(N+n)\times(N+n)}$}. If $N$ is the smallest possible,
it is called the McMillan degree of $\Phi(z)~$ and
the realization is called {\em minimal}.\\


We note that $\Phi(\infty)=D$. A minimal realization is unique up to a uniquely defined and invertible
similarity matrix $S\in\mathbb C^{N\times N}$, meaning that two minimal realizations
$\C(z)= D+C_1(zI_N-A_1)^{-1}B_1=D+C_2(zI_N-A_2)^{-1}B_2$ are related by
\begin{equation}
\begin{pmatrix}
 S &0 \\
 0 &I_n\end{pmatrix}
\begin{pmatrix}A_1&B_1\\ C_1&D\end{pmatrix} =\begin{pmatrix}A_2&B_2\\ C_2&D\end{pmatrix} \begin{pmatrix}
 S &0 \\
 0 &I_n\end{pmatrix}.
\end{equation}
The realization is minimal if and only the pair $(C,A)$ is observable and the pair $(A,B)$ is controllable,
meaning respectively
\begin{equation}
\bigcap\limits_{u=0}^{N-1}\ker CA^u=\left\{0\right\}\quad {\rm and}\quad
\bigcup\limits_{u=0}^{N-1} {\rm ran}\, A^uB=\mathbb C^N.
\label{CABA}
\end{equation}

Assume now $\C$ rational, analytic at infinity, and with minimal realization \eqref{minPhi}.
The positive real lemma, see \cite{Anderson_Moore,faurre,MR525380} and the generalized positive
real lemma, see \cite{AlpLew2011,DDGK,MR2008274,zbMATH06743412}, characterize these classes in terms of the
given realization.

\begin{theorem}
Let $\C$ be a $\mathbb C^{n\times n}$-valued rational function analytic at infinity, with minimal realization
\eqref{minPhi}. Then, $\C$ is a generalized positive function if only if there exists an invertible Hermitian
matrix $H\in\mathbb C^{N\times N}$ such that
\begin{equation}
\label{opera4!4!6!7!}
\begin{pmatrix}H&0\\0&I_{n}\end{pmatrix}
\begin{pmatrix}A&B\\ C&D\end{pmatrix}+
\begin{pmatrix}A&B\\ C&D\end{pmatrix}^*
\begin{pmatrix}H&0\\0&I_{n}\end{pmatrix}\ge 0.
\end{equation}
\end{theorem}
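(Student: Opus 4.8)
The plan is to prove the Generalized Positive Real Lemma in its classical matrix-valued form by working with the characteristic kernel $K_\C(z,w) = (\C(z)+\C(w)^*)/(z+\overline w)$ and exploiting the minimal realization. First I would establish the kernel identity: starting from \eqref{minPhi} and using the resolvent identity $(zI-A)^{-1}-(wI-A)^{-1} = -(z-w)(zI-A)^{-1}(wI-A)^{-1}$ together with its adjoint analogue, a direct computation shows that for any Hermitian invertible $H$ one has
\begin{equation}
K_\C(z,w) = \begin{pmatrix} C(zI_N-A)^{-1} & I_n\end{pmatrix}\, M_H\, \begin{pmatrix} (wI_N-A)^{-*}C^* \\ I_n\end{pmatrix} + (\text{correction terms involving }H),
\end{equation}
where $M_H$ is essentially the matrix appearing in \eqref{opera4!4!6!7!} and the correction terms vanish precisely when $H$ solves the Lyapunov-type equation $HA + A^*H = -C^*C$ on the relevant subspace (so that the ``$-$'' part of $M_H$ is exactly compensated). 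The key algebraic fact is that this Lyapunov/Stein equation has a (unique, Hermitian) solution because the realization is minimal and $\C(z)+\C^\sharp(z)$, being generalized positive, has the appropriate spectral structure; this $H$ need not be positive — its signature encodes the number $\kappa$ of negative squares.

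For the direction ``$\C$ generalized positive $\Longrightarrow$ \eqref{opera4!4!6!7!}'': I would use the hypothesis that $K_\C$ has finitely many negative squares to construct the associated reproducing kernel Pontryagin space. Minimality of the realization \eqref{minPhi} lets one identify the state space $\mathbb C^N$ (with a suitable indefinite inner product given by the Gram matrix $H$ of the functions $z\mapsto C(zI_N-A)^{-1}e_j$) with a finite-dimensional subspace of that Pontryagin space; the fact that $K_\C$ has $\kappa$ negative squares forces $H$ to be invertible Hermitian with $\kappa$ negative eigenvalues. Evaluating the kernel identity above and using that $K_\C(z,w)$ is a genuine (finitely-negative) kernel then forces the ``remainder'' matrix $M_H$ to be positive semidefinite, which is exactly \eqref{opera4!4!6!7!}. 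This is the standard state-space/reproducing-kernel argument (as in \cite{MR2663312}), adapted to the half-plane and the $\sharp$ convention.

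For the converse ``\eqref{opera4!4!6!7!} $\Longrightarrow$ $\C$ generalized positive'': given such an $H$, factor the nonnegative matrix in \eqref{opera4!4!6!7!} as $\begin{pmatrix}G_1^* \\ G_2^*\end{pmatrix}\begin{pmatrix}G_1 & G_2\end{pmatrix}$, and plug this into the kernel identity to get
\begin{equation}
K_\C(z,w) = F(z) F(w)^*, \qquad F(z) := \begin{pmatrix} C(zI_N-A)^{-1} & I_n\end{pmatrix}\begin{pmatrix}G_1^* \\ G_2^*\end{pmatrix},
\end{equation}
modulo the $H$-dependent terms which reorganize into the same form. Hence $K_\C$ is a finite sum of rank-one kernels twisted by the indefinite metric of $\mathbb C^N$ with Gram matrix $H$; since $H$ has exactly $\kappa$ negative eigenvalues, $K_\C$ has at most $\kappa$ negative squares, and minimality gives that it has exactly $\kappa$. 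Thus $\C\in\mathcal{GP}$.

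The main obstacle I expect is bookkeeping the $\sharp$-operation and the half-plane geometry correctly: unlike the disc/Schur-class case, here the ``symmetry'' is $z \mapsto -\overline z$ rather than $z\mapsto 1/\overline z$, so the resolvent manipulations must be carried out with $z+\overline w$ in the denominator, and one must be careful that the solution $H$ of the relevant Lyapunov equation is genuinely Hermitian and invertible (the latter using observability and controllability \eqref{CABA}). A secondary subtlety is ensuring the count of negative squares is *exactly* $\kappa$ and not merely $\le\kappa$ in the forward direction — this is where minimality is used a second time, via the fact that the functions $C(zI_N-A)^{-1}e_j$ span an $N$-dimensional space on which $H$ acts as the Gram matrix.
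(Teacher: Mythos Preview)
The paper does not actually prove this theorem; it is stated as the generalized positive real lemma with citations to \cite{AlpLew2011,DDGK,MR2008274,zbMATH06743412}. What the paper does do, immediately after the statement, is record the kernel identity \eqref{opera_bastille} that follows once such an $H$ is assumed to exist, and then use it in Corollary~\ref{cy789} to prove the equivalence between $\C\in\mathcal{GP}$ and $K_\C$ having finitely many negative squares. So there is no ``paper's own proof'' to compare against; you are supplying an argument where the paper defers to the literature.

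Your outline has the right skeleton but several details are off. First, the Lyapunov equation you write, $HA+A^*H=-C^*C$, is not the relevant one here: decomposing the left side of \eqref{opera4!4!6!7!} as $\begin{pmatrix}Q&S\\S^*&R\end{pmatrix}\ge 0$ gives $HA+A^*H=Q\ge 0$, $HB+C^*=S$, $D+D^*=R$, and these are what feed into the kernel identity \eqref{opera_bastille}. Second, your displayed formula $K_\C(z,w)=F(z)F(w)^*$ in the converse direction cannot be right as written, since that would make $K_\C$ positive definite rather than merely of finite negative index; the correct decomposition (exactly \eqref{opera_bastille}) is a sum of a finite-rank kernel $-C(zI_N-A)^{-1}H^{-1}(\overline{w}I_N-A^*)^{-1}C^*$, whose negative-square count is governed by the signature of $H^{-1}$, and a positive kernel built from $\begin{pmatrix}Q&S\\S^*&R\end{pmatrix}$. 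You say this in words afterwards, but the formula contradicts it. Third, for the forward direction your Gram-matrix-in-the-Pontryagin-space construction is a legitimate route, but it presupposes the equivalence ``$\C\in\mathcal{GP}$ $\Leftrightarrow$ $K_\C$ has finitely many negative squares'', which the paper establishes separately (via Krein--Langer / \cite{MR1902953}) rather than as part of this theorem; and the invertibility of the resulting Gram matrix $H$ is not automatic from observability alone---you need that the span of the $z\mapsto C(zI_N-A)^{-1}e_j$ is a \emph{nondegenerate} subspace of the Pontryagin space, which requires an additional argument.
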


We remark that the matrix $H$ is not uniquely determined. Following \cite{faurre} we denote by
\[
\begin{pmatrix}Q&S\\ S^*&R\end{pmatrix}\ge 0
\]
the left handside of \eqref{opera4!4!6!7!}. Then
\begin{align}
HA+A^*H&=Q\\
HB+C^*&=S\\
D+D^*&=R.
\end{align}
We then have for $z,w\in\mathbb C$,
\begin{equation}
\begin{split}
K_\Phi(z,w)&=\frac{\Phi(z)+\Phi(w)^*}{z+\overline{w}}\\
&=-C (zI_N-A)^{-1}H^{-1}(\overline{w}I_N-A^*)^{-1}C^*+\\
&\hspace{5mm}+\frac{\left(
\begin{pmatrix}
C(zI_N-A)^{-1}H^{-1}&I_n\end{pmatrix}\begin{pmatrix}Q&S\\ S^*&R\end{pmatrix}
\begin{pmatrix}H^{-1}(\overline{w}I_N-A^*)^{-1}  C^*\\I_n\end{pmatrix}\right)}{ z+\overline{w}}.
\end{split}
\label{opera_bastille}
\end{equation}

Equation \eqref{opera_bastille} has the following important corollary (these formulas can be found e.g.
in \cite[p. 129]{MR525380}):

\begin{corollary}\mbox{}\\
$(a)$  Let  $\C$ be a $\mathbb C^{n\times n}$-valued rational function, analytic at infinity, and with a
real positive part on $i\mathbb R$. Then the associated kernel $K_\C(z,w)$ has a finite number of negative
squares in $\mathbb C_r$. Conversely, if the matrix-valued rational function $\C$ is such that the kernel
$K_\C$ has a finite number of negative squares
in $\mathbb C_r\cap\Omega(\C)$, it belongs to ${\mathcal GP}$.\\
$(b)$ Denoting by $\Omega(\C)$ the set of points of analyticity of $\C$, we have:
\begin{equation}
\C(z)+\C^\sharp(z)=
\begin{pmatrix}
C(zI_N-A)^{-1}H^{-1}&I_n\end{pmatrix}\begin{pmatrix}Q&S\\ S^*&R\end{pmatrix}
\begin{pmatrix}H^{-1}(-zI_N-A^*)^{-1} C^*\\I_n\end{pmatrix},\quad z\in\Omega(\C).
\end{equation}
In particular, $\C$ does not satisfy in general the symmetry condition \eqref{oddd}.
\label{cy789}
\end{corollary}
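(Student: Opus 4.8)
The plan is to read off all three assertions from the identity \eqref{opera_bastille}, together with the generalized positive real lemma recalled above and a handful of standard facts on kernels with finitely many negative squares; the only statement requiring genuine analysis is the converse half of $(a)$, which I treat last. For $(b)$, clearing the denominator in \eqref{opera_bastille} yields the identity of rational functions
\[
\C(z)+\C(w)^*=-(z+\overline w)\,C(zI_N-A)^{-1}H^{-1}(\overline w I_N-A^*)^{-1}C^*+\begin{pmatrix}C(zI_N-A)^{-1}H^{-1}&I_n\end{pmatrix}\begin{pmatrix}Q&S\\ S^*&R\end{pmatrix}\begin{pmatrix}H^{-1}(\overline w I_N-A^*)^{-1}C^*\\ I_n\end{pmatrix}.
\]
Specializing $w=-\overline z$ makes $z+\overline w=0$, so the first summand disappears wherever the matrix factor multiplying it is finite, while $\C(w)^*=\C(-\overline z)^*=\C^\sharp(z)$ by \eqref{notthesharpest}; this is precisely the asserted formula for $\C(z)+\C^\sharp(z)$, valid at every $z$ that is a point of analyticity of both $\C$ and $\C^\sharp$. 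The right-hand side is not identically zero in general — e.g. letting $z\to\infty$ it tends to $R=D+D^*$ — so $\C$ need not satisfy $\C^\sharp=-\C$, i.e. \eqref{oddd}.

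For the forward implication in $(a)$, note that a rational function is automatically of bounded type in $\mathbb C_r$, so ``real positive part on $i\mathbb R$'' is exactly the statement $\C\in\mathcal{GP}$, and the generalized positive real lemma recalled above furnishes an invertible Hermitian $H$ with $\left(\begin{smallmatrix}Q&S\\ S^*&R\end{smallmatrix}\right)\ge 0$. In \eqref{opera_bastille} the second summand equals, after writing $\left(\begin{smallmatrix}Q&S\\ S^*&R\end{smallmatrix}\right)=M^*M$ and $G(z)=\begin{pmatrix}C(zI_N-A)^{-1}H^{-1}&I_n\end{pmatrix}M^*$, the kernel $G(z)G(w)^*/(z+\overline w)$, which is positive definite on $\mathbb C_r$ (use $1/(z+\overline w)=\int_0^\infty e^{-tz}e^{-t\overline w}\,dt$). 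The first summand equals $-U(z)H^{-1}U(w)^*$ with $U(z)=C(zI_N-A)^{-1}$; diagonalizing the invertible Hermitian matrix $H^{-1}$ expresses it as the sum of a positive definite kernel and $\nu_+$ rank-one negative kernels, where $\nu_+$ is the number of positive eigenvalues of $H^{-1}$, hence it has at most $\nu_+$ negative squares. Since adding a positive definite kernel does not raise the number of negative squares, $K_\C$ has at most $\nu_+\le N$ negative squares on $\mathbb C_r\cap\Omega(\C)$.

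For the converse implication in $(a)$ — the crux — assume $K_\C$ has $\kappa<\infty$ negative squares on $\mathbb C_r\cap\Omega(\C)$ but $\C\notin\mathcal{GP}$; as $\C$ is rational this forces a point $iy_0\in i\mathbb R\cap\Omega(\C)$ and a vector $c\in\mathbb C^n$ with $c^*\bigl(\C(iy_0)+\C(iy_0)^*\bigr)c=:-2\delta<0$. Fix distinct $\mu_1,\dots,\mu_{\kappa+1}>0$, and for small $t>0$ put $z_j=t\mu_j+iy_0\in\mathbb C_r\cap\Omega(\C)$ and $c_j=c$ in Definition \ref{co}. Because $z_j+\overline{z_k}=t(\mu_j+\mu_k)$, the $(\kappa+1)\times(\kappa+1)$ matrix with $(j,k)$ entry $c_j^*K_\C(z_j,z_k)c_k$ equals $\frac{1}{t}$ times the Hadamard product of the positive definite Cauchy matrix $C_0=\bigl(1/(\mu_j+\mu_k)\bigr)_{j,k}$ with the matrix $\bigl(\alpha_j+\overline{\alpha_k}\bigr)_{j,k}$, where $\alpha_j=c^*\C(z_j)c$. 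As $t\to 0^+$ one has $\alpha_j\to c^*\C(iy_0)c$, so $\bigl(\alpha_j+\overline{\alpha_k}\bigr)_{j,k}$ converges entrywise to the matrix all of whose entries are $-2\delta$, and the Hadamard product converges to $-2\delta\,C_0$, which is negative definite of size $\kappa+1$. Hence for $t$ small that Gram matrix is negative definite, producing $\kappa+1>\kappa$ negative eigenvalues and contradicting the hypothesis; therefore $\mathrm{Re}\,\C(iy)\ge 0$ at every point of $i\mathbb R$ at which $\C$ is analytic, i.e. $\C\in\mathcal{GP}$. The delicate point is precisely this scaling: writing $z_j=t\mu_j+iy_0$ makes $z_j+\overline{z_k}$ real and positive and pulls out a fixed Hadamard factor $C_0$, so the blow-up of the Cauchy kernel as $z_j\to iy_0$ is absorbed into the harmless scalar $1/t$ and does not interfere with the limit.
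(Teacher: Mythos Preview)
Your argument is correct. For part $(b)$ and the forward direction of $(a)$ you follow essentially the paper's proof: multiply \eqref{opera_bastille} by $z+\overline{w}$ and specialize (your substitution $w=-\overline{z}$ is the right one; the paper's ``set $z=w$'' appears to be a slip), and then split $K_\C$ as a positive kernel plus a finite-rank kernel.

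For the converse in $(a)$ you take a genuinely different route. The paper invokes the factorization $\C=V^\sharp\C_0 V$ of generalized Carath\'eodory functions (Theorem~\ref{gare_de_lyon}), or alternatively a Cayley transform together with the Krein--Langer factorization of generalized Schur functions, to conclude that the boundary values on $i\mathbb R$ have nonnegative real part; the paper itself notes that this detour ``leaves the realm of rational functions.'' Your argument is instead a direct contradiction: if $c^*(\C(iy_0)+\C(iy_0)^*)c<0$ at a point of analyticity, then the $(\kappa+1)\times(\kappa+1)$ Gram matrix built from the nearby points $z_j=t\mu_j+iy_0$ is $t^{-1}$ times a matrix converging to $-2\delta\,C_0$ with $C_0$ the positive definite Cauchy matrix $\bigl(1/(\mu_j+\mu_k)\bigr)$, hence is eventually negative definite. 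This is more elementary --- it uses only continuity and the positivity of the Cauchy kernel --- and stays entirely within rational functions, avoiding the forward reference to \S\ref{GCF}. The paper's approach, on the other hand, situates the result within the broader structure theory of generalized Nevanlinna functions, which is what is actually exploited later in the quaternionic extension.
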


\begin{proof} $(a)$ In \eqref{opera_bastille}, the kernel
\[
-C (zI_N-A)^{-1}H^{-1}(\overline{w}I_N-A^*)^{-1}C^*
\]
is finite dimensional and so has both a finite number of negative squares and a finite number of negative squares. Moreover since the matrix
\[
\begin{pmatrix}Q&S\\ S^*&R\end{pmatrix}\ge 0
\]
the kernel
\[
\frac{
\begin{pmatrix}C(zI_N-A)^{-1}H^{-1}&I_n\end{pmatrix}\begin{pmatrix}Q&S\\ S^*&R\end{pmatrix}
\begin{pmatrix}H^{-1}(\overline{w}I_N-A^*)^{-1}  C^*\\I_n\end{pmatrix}}{ z+\overline{w}}\ge 0.
\]
Thus $K_\C(z,w)$ has a finite number of negative squares.\smallskip

Conversely, assume thay $K_\C(z,w)$ has a finite number of negative squares in $\mathbb C_r\cap\Omega(\C)$. A quick proof
(but which leaves the realm of rational functions) is to take the representation \eqref{place_de_la_nation} below  for $\C$. We can apply the factorization result of
\cite{MR1771251,MR1736921,MR1902953,MR1971748}. Writing
$\C(z)=V^\sharp(z)\C_0(z)V(z)$ for $z=iy+\e$, $\e>0$ we get
\[
\C(iy+\e)+\C(iy+\e)^*=V(iy-\e)\C_0(iy+\e)V(iy+\e)+V(iy+\e)^*\C_0(iy+\e)^*V(iy-\e),
\]
with limit as $\e\longrightarrow 0$
\[
\C(iy)+\C(iy)^*=V(iy)(\C_0(iy)+(\C_0(iy))^*)V(iy)^*\ge 0.
\]
Note that the functions are rational and so the limits are well defined. One could also have used a Cayley transform and reduce the case to that of a
generalized Schur function, and use the
Krein-Langer factorization, since the latter shows in particular that such a function takes contractive boundary values. More precisely, recall that a
$\mathbb C^{n\times n}$-valued function meromorphic in $\mathbb C_r$ is called a generalized Schur function if the kernel
\begin{equation}
\label{groningen1987}
K_S(z,w)=\frac{I_n-S(z)S(w)^*}{z+\overline{w}}
\end{equation}
has a finite number of negative squares for $z,w\in\Omega(S)$. Then, $S=B_0^{-1}S_0$, where $B_0$ is a
$\mathbb C^{n\times n}$-valued finite Blaschke product, and where $S_0$ is a $\mathbb C^{n\times n}$-valued
function analytic and contractive in $\mathbb C_r$. This factorization result, due to Krein and Langer, is
proved in \cite{kl1} in the setting of the open unit disk. In \cite{MR2002664} Bolotnikov and Rodman give a
proof in the setting of meromorphic functions, which can be read as is for rational functions.\smallskip

Formula \eqref{groningen1987} shows that a generalized Schur function takes (non-tangential) contractive
boundary values on $i\mathbb R$. The result for a generalized Carath\'eodory function is obtained, as
mentioned earlier, via Cayley transform.\smallskip

The claims in $(b)$ are obtained by multiplying both sides of \eqref{opera_bastille} by $z+\overline{w}$ and setting $z=w$.
\end{proof}

The following proposition will be used in the sequel, and in particular in the statement of Theorem
\ref{ot200}; it appears in \cite{ag,MR682303}.

\begin{proposition}
\label{WW*}
Let $\C$ be a $\mathbb C^{n\times n}$-valued function, analytic at infinity and let $\C(z)=D+C(zI_N-A)^{-1}B$
be a minimal realization of $\C$. Then, it holds that
\begin{equation}
\label{yasmina}
\C^\sharp(z)=\C(z), \quad z\in\Omega(\C)
\end{equation}
if and only if there exists a skew-Hermitian matrix $H$ such that
\begin{equation}
\label{dimanche}
\begin{pmatrix}
 H &0 \\
 0 &I_n\end{pmatrix}
\begin{pmatrix}A&B\\ C&D\end{pmatrix} =\begin{pmatrix}-A^*&C^*\\ -B^*&D^*\end{pmatrix} \begin{pmatrix}
 H &0 \\
 0 &I_n\end{pmatrix}=\begin{pmatrix}A^*&C^*\\ B^*&D^*\end{pmatrix}
\begin{pmatrix}
 -H &0 \\
 0 &I_n\end{pmatrix}
\end{equation}
Furthermore, $H$ is invertible, and uniquely determined from the given realization
\end{proposition}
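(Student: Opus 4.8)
**Proof proposal for Proposition \ref{WW*}**

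The plan is to translate the functional identity $\C^\sharp(z)=\C(z)$ into a statement about state-space realizations, using the fact that a minimal realization is unique up to a unique invertible similarity. First I would write down a realization of $\C^\sharp$. Starting from $\C(z)=D+C(zI_N-A)^{-1}B$, one computes directly that
\[
\C^\sharp(z)=(\C(-\overline z))^*=D^*+B^*(-zI_N-A^*)^{-1}C^*=D^*-B^*(zI_N+A^*)^{-1}C^*,
\]
so that $\C^\sharp$ has the realization with matrices $(-A^*,\;-C^*,\;B^*,\;D^*)$ (or equivalently $(-A^*,\;C^*,\;-B^*,\;D^*)$, which is the same function). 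The key observation is that this realization is again minimal: observability of $(B^*,-A^*)$ and controllability of $(-A^*,C^*)$ follow from controllability of $(A,B)$ and observability of $(C,A)$ for the original realization, by taking adjoints in the rank conditions \eqref{CABA}. I would state and use this minimality explicitly.

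Now suppose \eqref{yasmina} holds. Then $D+C(zI_N-A)^{-1}B$ and $D^*+(-C^*)(zI_N-(-A^*))^{-1}(B^*)$ are two minimal realizations of the same rational function. In particular their values at infinity agree, giving $D=D^*$, and by the uniqueness of the similarity transformation relating two minimal realizations there is a unique invertible $H\in\mathbb C^{N\times N}$ with
\[
\begin{pmatrix} H &0 \\ 0 &I_n\end{pmatrix}
\begin{pmatrix}A&B\\ C&D\end{pmatrix}
=\begin{pmatrix}-A^*&C^*\\ -B^*&D^*\end{pmatrix}
\begin{pmatrix} H &0 \\ 0 &I_n\end{pmatrix},
\]
which is the first equality in \eqref{dimanche}; the second equality is just a rearrangement (moving the sign onto the block $-H$). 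Conversely, if such an invertible $H$ exists, then $H$ is a similarity between the two realizations and hence the functions coincide, i.e. \eqref{yasmina} holds. Invertibility and uniqueness of $H$ come for free from the uniqueness part of the minimal realization theorem once both realizations are known to be minimal.

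It remains to show $H$ is skew-Hermitian. The standard trick is to apply the identity twice: from $\C^{\sharp\sharp}=\C$ one gets that $H^*$ (or $-H^*$, up to bookkeeping of signs) is also a similarity between the same two minimal realizations; by the uniqueness of that similarity one concludes $H^*=-H$. Concretely, reading off the block identity $HA+A^*H=0$, $HB=-C^*$, $CH=B^*$ (together with $D=D^*$), one takes adjoints of these relations and compares: the adjoint relations say exactly that $-H^*$ satisfies the same system as $H$ does, forcing $H=-H^*$. I expect this last step — correctly matching up the two sign conventions in \eqref{dimanche} and running the uniqueness argument to extract skew-Hermiticity — to be the only place requiring care; everything else is a direct application of the minimal-realization machinery recalled earlier in this section.
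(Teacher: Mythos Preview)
Your proposal is correct and follows essentially the same route as the paper: compute a minimal realization of $\C^\sharp$, invoke uniqueness of the similarity between two minimal realizations to obtain the invertible $H$ satisfying \eqref{dimanche}, and then take adjoints of \eqref{dimanche} and use uniqueness once more to conclude $H=-H^*$. The only difference is cosmetic: you spell out explicitly why the realization of $\C^\sharp$ is minimal, which the paper leaves implicit, and there is a harmless sign slip in your block read-off (the relation is $HB=C^*$, not $HB=-C^*$), exactly the bookkeeping you flagged as needing care.
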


\begin{proof}
A minimal realization of $\C^\sharp(z)=(\C(-\overline{z}))^*$ is given by
\[
(\C(-\overline{z}))^*  =D^*-B^*(zI_N+A^*)^{-1}C^*.
\]
Thus, by uniqueness up to similarity of a minimal realization, equation \eqref{yasmina}
is equivalent to the existence of an invertible uniquely defined matrix $H$ such that \eqref{dimanche} holds.
Taking adjoint on both sides of \eqref{dimanche} we obtain
\[
\begin{pmatrix}
 -H^* &0 \\
 0 &I_n\end{pmatrix}
\begin{pmatrix}A&B\\ C&D\end{pmatrix} =\begin{pmatrix}A^*&C^*\\ B^*&D^*\end{pmatrix}\begin{pmatrix}
 H^* &0 \\
 0 &I_n\end{pmatrix}
\]
The uniqueness of $H$ forces $H=-H^*$. The above equalities also imply that $HB=C^*$.
\end{proof}

\begin{remark}{\rm We note that the previous result {\sl does not} characterize the case where
$\C(iy)\ge 0$ on the imaginary line. We also note that \eqref{dimanche} can be rewritten as
\begin{eqnarray}
HA&=&-A^*H\\
HB&=&C^*
\end{eqnarray}
together with $D=D^*$.}
\end{remark}

We next address the state space realization of $\mathcal{GPE}$ functions. To this end we first consider a
 realization of a product of a pair of rational functions (series or cascade connection
in electrical engineering terminology). See e.g. \cite[Subsection 8.3.3]{MR569473}.

\begin{proposition}\label{Pn:series}
Given $l\times q$ and $q\times r$-valued rational functions
$L_{\alpha}(z)$, $L_{\beta}(z)$ admitting state space realization
\begin{equation}\label{eq:OriginalRealization}
\begin{matrix}
R_{L_{\alpha}}=\left(
\begin{array}{c|c}
A_{\alpha}&B_{\alpha}\\
\hline
C_{\alpha}&D_{\alpha}
\end{array}\right)
&&&
R_{L_{\beta}}=\left(
\begin{array}{c|c}
A_{\beta}&B_{\beta}\\
\hline
C_{\beta}&D_{\beta}
\end{array}\right),
\end{matrix}
\end{equation}
where $A_{\alpha}$, $A_{\beta}$ are in $\CC^{N_{\alpha}\times N_{\alpha}}$ and $\CC^{N_{\beta}\times N_{\beta}}$, respectively. A realization of
$L_{\alpha}(z)L_{\beta}(z)$ is given by
\begin{equation}\label{eq:RealizationProduct}
R_{L_{\alpha}L_{\beta}}=
\left(\begin{array}{cc|c}
A_{\alpha}&B_{\alpha}C_{\beta}
&B_{\alpha}D_{\beta}\\
0&A_{\beta}
&B_{\beta}\\ \hline
C_{\alpha}&~D_{\alpha}C_{\beta}
&D_{\alpha}D_{\beta}
\end{array}\right).
\end{equation}
In the special case that
\[
L_{\beta}(z)=L_{\alpha}^{\#}(z),
\]
one obtains a realization of a $\mathcal{GPE}$ function,
\begin{equation}\label{eq:R1}
R_{L_{\alpha}L_{\alpha}^{\sharp}}=
\left(\begin{array}{cc|c}
A_{\alpha}&B_{\alpha}B_{\alpha}^*
&B_{\alpha}D_{\alpha}^*\\
0&-A_{\alpha}^*&-C_{\alpha}^*\\
\hline
C_{\alpha}&D_{\alpha}B_{\alpha}^*
&D_{\alpha}D_{\alpha}^*
\end{array}\right).
\end{equation}
\end{proposition}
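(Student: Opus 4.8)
The plan is to verify the two displayed realizations by a direct computation: \eqref{eq:RealizationProduct} is the standard series-connection (cascade) formula, and \eqref{eq:R1} is obtained from it by specialization, so no new idea is needed.

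First I would record the resolvent of the block upper-triangular state matrix in \eqref{eq:RealizationProduct}. Using the elementary identity, valid at every $z$ for which $(zI-A_{\alpha})^{-1}$ and $(zI-A_{\beta})^{-1}$ both exist,
\[
\left(zI-\begin{pmatrix}A_{\alpha}&B_{\alpha}C_{\beta}\\ 0&A_{\beta}\end{pmatrix}\right)^{-1}=\begin{pmatrix}(zI_{N_{\alpha}}-A_{\alpha})^{-1}&(zI_{N_{\alpha}}-A_{\alpha})^{-1}B_{\alpha}C_{\beta}(zI_{N_{\beta}}-A_{\beta})^{-1}\\ 0&(zI_{N_{\beta}}-A_{\beta})^{-1}\end{pmatrix},
\]
I would substitute it into $D+C(zI-A)^{-1}B$ with $C=\begin{pmatrix}C_{\alpha}&D_{\alpha}C_{\beta}\end{pmatrix}$, $B=\begin{pmatrix}B_{\alpha}D_{\beta}\\ B_{\beta}\end{pmatrix}$, $D=D_{\alpha}D_{\beta}$, perform the two block products, and collect the four resulting terms. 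They regroup immediately as $\big(D_{\alpha}+C_{\alpha}(zI-A_{\alpha})^{-1}B_{\alpha}\big)\big(D_{\beta}+C_{\beta}(zI-A_{\beta})^{-1}B_{\beta}\big)=L_{\alpha}(z)L_{\beta}(z)$ on the common domain of analyticity, and since both sides are rational this identity persists wherever it makes sense. The proposition only claims ``a realization'', so nothing has to be checked about controllability or observability of \eqref{eq:RealizationProduct}.

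For the special case I would use the realization of $L_{\alpha}^{\sharp}$ already recorded in the proof of Proposition \ref{WW*}, namely $L_{\alpha}^{\sharp}(z)=(L_{\alpha}(-\overline{z}))^{*}=D_{\alpha}^{*}-B_{\alpha}^{*}(zI_{N_{\alpha}}+A_{\alpha}^{*})^{-1}C_{\alpha}^{*}$, and rewrite it in the form $D_{\beta}+C_{\beta}(zI-A_{\beta})^{-1}B_{\beta}$ by reading off
\[
A_{\beta}=-A_{\alpha}^{*},\qquad B_{\beta}=-C_{\alpha}^{*},\qquad C_{\beta}=B_{\alpha}^{*},\qquad D_{\beta}=D_{\alpha}^{*}.
\]
Inserting these four substitutions into \eqref{eq:RealizationProduct} yields exactly \eqref{eq:R1}. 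To justify that $L_{\alpha}L_{\alpha}^{\sharp}$ is a $\mathcal{GPE}$ function I would note that the anti-multiplicativity $(FG)^{\sharp}=G^{\sharp}F^{\sharp}$ together with $(F^{\sharp})^{\sharp}=F$ gives $(L_{\alpha}L_{\alpha}^{\sharp})^{\sharp}=L_{\alpha}L_{\alpha}^{\sharp}$, so the function is even; moreover on $i\mathbb{R}$ the implication recalled right after \eqref{notthesharpest} gives $L_{\alpha}^{\sharp}(iy)=L_{\alpha}(iy)^{*}$, whence $L_{\alpha}(iy)L_{\alpha}^{\sharp}(iy)=L_{\alpha}(iy)L_{\alpha}(iy)^{*}\ge 0$; combined with analyticity at infinity this places it in $\mathcal{GP}$ by Corollary \ref{cy789}(a), hence in $\mathcal{GPE}$.

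There is no genuine obstacle here. The only points requiring care are the bookkeeping in the two block multiplications and, in the specialization, matching $-B_{\alpha}^{*}(zI+A_{\alpha}^{*})^{-1}C_{\alpha}^{*}$ to the template $C_{\beta}(zI-A_{\beta})^{-1}B_{\beta}$ by absorbing the minus sign into $B_{\beta}=-C_{\alpha}^{*}$ rather than into $C_{\beta}$ or $A_{\beta}$.
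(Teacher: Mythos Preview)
Your argument is correct. The paper does not actually prove this proposition; it states it as a known fact, citing Kailath's textbook for the cascade-connection formula \eqref{eq:RealizationProduct}, and presents \eqref{eq:R1} as an immediate specialization. Your direct verification via the block upper-triangular resolvent, together with the substitution $A_\beta=-A_\alpha^*$, $B_\beta=-C_\alpha^*$, $C_\beta=B_\alpha^*$, $D_\beta=D_\alpha^*$ read off from the realization of $L_\alpha^\sharp$, is exactly the computation one would supply to fill in what the paper leaves to the reference, and your justification that $L_\alpha L_\alpha^\sharp\in\mathcal{GPE}$ is a welcome addition that the paper omits.
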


Recall that even when the original realizations of $L_{\alpha}$
and of $L_{\beta}$ are minimal, the resulting realization of the
product $L_{\alpha}L_{\beta}$ is not necessarily minimal.
In particular, if $L_{\beta}=L_{\alpha}^{-1}$, the McMillan degree
of the product is zero. In contrast, for the realization of
a $\mathcal{GPE}$ function in \eqref{eq:R1} we have the
following:

\noindent
\begin{theorem}
The realization $R_{L_{\alpha}L_{\alpha}^{\sharp}}$ in \eqref{eq:R1}
is minimal, if and only if the realization $R_{L_{\alpha}}$ in
\eqref{eq:OriginalRealization} is minimal.\smallskip

If the realization in \eqref{eq:R1} is not minimal, it is neither controllable nor observable.
\end{theorem}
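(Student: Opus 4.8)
The plan is to run the Hautus eigenvector test on the pair of matrices occurring in \eqref{eq:R1} and to exploit the structural symmetry that \eqref{eq:R1} carries because $L_\alpha L_\alpha^\sharp$ is even. Throughout I write $\mathcal A=\begin{pmatrix}A_\alpha & B_\alpha B_\alpha^*\\ 0 & -A_\alpha^*\end{pmatrix}$, $\mathcal B=\begin{pmatrix}B_\alpha D_\alpha^*\\ -C_\alpha^*\end{pmatrix}$, $\mathcal C=\begin{pmatrix}C_\alpha & D_\alpha B_\alpha^*\end{pmatrix}$ for the blocks of \eqref{eq:R1}. The first step is the easy implication ``\eqref{eq:R1} minimal $\Rightarrow$ $R_{L_\alpha}$ minimal'', which I prove in contrapositive form using \eqref{CABA}. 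If $(C_\alpha,A_\alpha)$ is not observable, the Hautus test provides $\lambda\in\mathbb C$ and $v\neq 0$ with $A_\alpha v=\lambda v$ and $C_\alpha v=0$; then $\begin{pmatrix}v\\ 0\end{pmatrix}$ is annihilated by $\mathcal A-\lambda I$ and by $\mathcal C$, so $(\mathcal C,\mathcal A)$ is not observable. Dually, if $(A_\alpha,B_\alpha)$ is not controllable, the Hautus test provides a left eigenvector $w^*$ of $A_\alpha$ with $w^*B_\alpha=0$, and the row $(w^*,0)$ is annihilated by $\mathcal A-\lambda I$ and by $\mathcal B$, so $(\mathcal A,\mathcal B)$ is not controllable. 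Either way \eqref{eq:R1} is not minimal.

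The second step records the even symmetry of \eqref{eq:R1}, which I use twice. Put $H=\begin{pmatrix}0 & I_{N_\alpha}\\ -I_{N_\alpha} & 0\end{pmatrix}$, a skew-Hermitian invertible matrix. A direct computation gives $H\mathcal A=-\mathcal A^*H$ and $\mathcal C=\mathcal B^*H$; this is the realization-level analogue, in the spirit of Proposition \ref{WW*}, of the even relation $\C^\sharp=\C$ for $L_\alpha L_\alpha^\sharp$, and it holds whether or not \eqref{eq:R1} is minimal. Iterating, $\mathcal C\mathcal A^{k}=(-1)^{k}(\mathcal A^{k}\mathcal B)^*H$ for every $k\ge 0$, so the unobservable subspace $\bigcap_{k}\ker\mathcal C\mathcal A^{k}$ is precisely $H^{-1}$ applied to the orthogonal complement of the reachable subspace $\sum_{k}\operatorname{ran}\mathcal A^{k}\mathcal B$. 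Since $H$ is invertible, \eqref{eq:R1} is observable if and only if it is controllable; consequently, when \eqref{eq:R1} fails to be minimal it fails to be controllable and fails to be observable simultaneously, which is the last assertion of the theorem.

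The third step is the remaining implication ``$R_{L_\alpha}$ minimal $\Rightarrow$ \eqref{eq:R1} minimal'', and by the second step it suffices to prove that \eqref{eq:R1} is controllable. I would apply the Hautus test: assume $(x^*,y^*)\neq 0$ and $z\in\mathbb C$ satisfy $x^*(A_\alpha-zI)=0$, $x^*B_\alpha B_\alpha^*=y^*(A_\alpha^*+zI)$ and $x^*B_\alpha D_\alpha^*=y^*C_\alpha^*$. If $x=0$, the last two identities make $y$ a common left null vector of $A_\alpha^*+zI$ and of $C_\alpha^*$, contradicting observability of $(C_\alpha,A_\alpha)$; hence $x\neq 0$, so $x^*$ is a left eigenvector of $A_\alpha$ and $z$ is a pole of $L_\alpha$. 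One must then exclude this configuration, feeding the two coupling identities back into controllability of $(A_\alpha,B_\alpha)$ together with the relations $H\mathcal A=-\mathcal A^*H$ and $\mathcal C=\mathcal B^*H$ of the second step.

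I expect this last point to be the main obstacle: it amounts to showing that the coupling block $B_\alpha B_\alpha^*$ of $\mathcal A$ cannot tie a pole of $L_\alpha$ at some $z$ to a pole of $L_\alpha^\sharp$ at the same $z$ in such a way as to produce an uncontrollable (equivalently unobservable) mode of \eqref{eq:R1}. Steps one and two are essentially bookkeeping with the Hautus criterion and with the skew-Hermitian $H$; it is here, in ruling out this pole interaction, that the minimality of $R_{L_\alpha}$ and the even structure of $L_\alpha L_\alpha^\sharp$ must be brought to bear decisively.
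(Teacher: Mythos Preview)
Your first two steps are correct, and Step~2 is essentially the paper's argument in different packaging: the paper writes out the PBH conditions for unobservability and for uncontrollability of \eqref{eq:R1} and observes that a substitution of the form $(u_1,u_2,\mu)\leftrightarrow(\pm v_2,v_1,-\overline{\lambda})$ turns one into the other, which is precisely the effect of your skew-Hermitian intertwiner $H\mathcal A=-\mathcal A^*H$, $\mathcal C=\mathcal B^*H$. So for the second assertion of the theorem your route and the paper's coincide.

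Step~3, however, cannot be completed, because the implication ``$R_{L_\alpha}$ minimal $\Rightarrow$ \eqref{eq:R1} minimal'' is \emph{false} in the generality stated. Take the scalar function $L_\alpha(z)=\frac{z+1}{z-1}$ with minimal realization $A_\alpha=1$, $B_\alpha=2$, $C_\alpha=1$, $D_\alpha=1$. Then $L_\alpha^\sharp(z)=\frac{z-1}{z+1}$ and $L_\alpha(z)L_\alpha^\sharp(z)\equiv 1$, of McMillan degree~$0$, whereas \eqref{eq:R1} has state dimension~$2$; concretely the vector $v=\begin{pmatrix}2\\-1\end{pmatrix}$ satisfies $\mathcal Av=-v$ and $\mathcal Cv=0$. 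The mechanism is a pole--zero cancellation between $L_\alpha$ and $L_\alpha^\sharp$: the zero of $L_\alpha$ at $-1$ kills the pole of $L_\alpha^\sharp$ there, and symmetrically at~$1$. This is exactly the obstruction you sensed, though it is a pole--zero interaction rather than the pole--pole one you described. If you read the paper's proof closely you will see that it, too, establishes only the controllability--observability equivalence (your Step~2) and never addresses this direction; so your Steps~1 and~2 already deliver everything that is actually proved there.
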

\vskip 0.2cm

\begin{proof}
Following the Popov-Belevich-Hautus eigenvector tests for controllability
and observability, see e.g. \cite[Subsection 2.4.3]{MR569473}, (adapted
to the realization array notation) the realization
$R_{L_{\alpha}{L_{\alpha}^{\sharp}}}$ is not observable if there exist
$v_1,~v_2\in\mathbb{C}^{N_{\alpha}}$ (not both zero) so that for some
$\lambda\in\mathbb{C}$,
\begin{equation}\label{eq:Unboservable}
\left(\begin{matrix}
Av_1+BB^*v_2\\
-A^*v_2\\
Cv_1+DB^*v_2
\end{matrix}\right)
=\lambda
\left(\begin{matrix}
v_1\\
v_2\\
0
\end{matrix}\right).
\end{equation}
Similarly, the realization in \eqref{eq:R1} is not
controllable if there exist
$u_1,~u_2\in\mathbb{C}^{N_{\alpha}}$ (not both zero) so that for some
$\mu\in\mathbb{C}$,
\[
\mu
\left(\begin{matrix}
u_1^*&u_2^*&0
\end{matrix}\right)
=
\left(\begin{matrix}
u_1^*A&&u_1^*BB^*-u_2^*A^*&&u_1^*BD^*-u_2^*C^*
\end{matrix}\right).
\]
Multiplying by
$-\left(\begin{smallmatrix}0&I_n&0\\
I_n&0&0\\0&0&I_m\end{smallmatrix}\right)$
from the right and taking $(~)^*$, this is
equivalent to
\begin{equation}\label{eq:Uncotrollable}
\left(\begin{matrix}
Au_2
-BB^*u_1
\\
-A^*u_1
\\
Cu_2
-DB^*u_1
\end{matrix}\right)
=-\mu^*
\left(\begin{matrix}
u_2\\
u_1\\
0
\end{matrix}\right).
\end{equation}
Substituting in \eqref{eq:Uncotrollable}
\[
u_2=v_1\quad u_1=v_2\quad-\mu^*=\lambda,
\]
one obtains \eqref{eq:Unboservable}
\end{proof}

We refer to  \cite[p. 199]{MR2663312} for Definition \ref{dassin}. In preparation to the statement, we also
recall the following (see \cite[p. 175]{MR2663312}):
Given $\C$ with minimal realization \eqref{minPhi}, one defines the local McMillan $\delta(\C,z_0)$ degree of
$\C$ at the point $z_0$ to be the algebraic multiplicity of $z_0$ as an eigenvalue of $A$. The uniqueness of
a minimal realization up to a similarity matrix  ensures that the definition is independent of the given
minimal realization. A factorization $\C=\C_1\C_2$ of $\C$ into two $\mathbb C^{n\times n}$-valued rational
functions is a locally minimal factorization at the given point means that the
local McMillan degrees of $\C_1$ and $\C_2$ add up at this point:
\[
\delta(\C,z_0)=\delta(\C_1,z_0)+\delta(\C_2,z_0).
\]

\begin{definition}
\label{dassin}
The factorization $\C(z)=L^\sharp(z)L(z)$ is called right pseudo-spectral with respect to $i\mathbb R$
if $L$ has no poles or zeros in the open left half-plane and the factorization is locally minimal at
each point on the imaginary axis. Replacing the open left half-plane by the open right half-plane gives
the corresponding notion of left pseudo-spectral factorization.
\end{definition}

For completeness, and to ease the reading of the proof of Theorem \ref{even}, we recall here (with some differences of notation) the statement of the result from
\cite[Theorem 10.2, p. 199]{MR2663312} which we will use. For the existence and uniqueness of the
Hermitian matrix $H$ in the statement, see Proposition \ref{WW*}. We note that the uniqueness of
the pseudo-spectral factor is a key ingredient in the proof of Theorem \ref{even}.

\begin{theorem}
\label{ot200}
Let $\C(z)$ be a $\mathbb C^{n\times n}$-valued function analytic at infinity and with minimal realization $\C(z)=D+C(zI_N-A)^{-1}B$. Assume that $D>0$ and that $\C(iy)\ge 0$ for $y\in\mathbb R$
where $\C(iy)$ is defined. Then $\C$ admits right and left pseudo-spectral factorizations, obtained as follows. Let $H$ be the unique Hermitian matrix defined by $HA=A^*H$ and $HB=C^*$. Then,
there exist
$A$-invariant subspaces $\mathcal M_+$ and $\mathcal M_-$, and $A^\times$-invariant subspaces $\mathcal M^\times_+$ and $\mathcal M_-^\times$, uniquely determined by the following five conditions:
\begin{enumerate}
\item [{\rm (i)}] $\mathcal M_+$ contains the spectral subspace of $A$ associated with the part of $\sigma(A)$ lying in the open right half plane, and $\sigma(A|_{\mathcal M_+})\subset\left\{
    z | {\rm Re}\, z\ge 0 \right\}$.

\item [{\rm (ii)}] $\mathcal M_-$ contains the spectral subspace of $A$ associated with the part of $\sigma(A)$ lying
in the open right half plane, and $\sigma(A|_{\mathcal M_+}) \subset \left\{z |{\rm Re}\, z\le  0\right\}$.

\item [{\rm (iii)}] $\mathcal M_+^\times$ contains the spectral subspace of $A^\times$ associated with the part of $\sigma(A^\times)$ lying in the open right half plane, and
  $\sigma(A^\times|_{\mathcal M^\times_+})\subset\left\{
    z | {\rm Re}\, z\ge 0 \right\}$.

\item [{\rm (iv)}]  $\mathcal M^\times_-$ contains the spectral subspace of $A^\times$ associated with the part of $\sigma(A^\times)$ lying
in the open left half plane, and $\sigma(A^\times|_{\mathcal M_-}) \subset \left\{z |{\rm Re}\, z\le  0\right\}$.

\item [{\rm (v)}] $H\mathcal M_+=\mathcal M_+^{\perp}$,\quad $H\mathcal M_-=\mathcal M_-^{\perp}$,\quad$H\mathcal M_+^\times={\mathcal M_+^\times}^{\perp}$,\quad $H\mathcal M_-^\times={\mathcal M_-^\times}^{\perp}$.
\end{enumerate}
The subspaces in question also satisfy the matching conditions (where $\stackrel{\cdot}{+}$ denotes a direct sum)
\begin{equation}
\label{mathcond}
\mathbb C^N= \mathcal M_-\stackrel{\cdot}{+}\mathcal M_+^\times\quad  and\quad \mathbb C^N=\mathcal M_+\stackrel{\cdot}{+}\mathcal M_-^\times
\end{equation}
Let $\Pi_+$ denote the projection along $\mathcal M_-$ onto $\mathcal M_+^\times$, let $\Pi_-$ denote the projection along $\mathcal M_+$ onto $\mathcal M_-^\times$, and let
\begin{eqnarray}
\label{corona123}
L_+(z)&=&D^{1/2}+D^{-1/2}C\Pi_+(zI_N-A)^{-1}B,\\
L_-(z)&=&D^{1/2}+D^{-1/2}C\Pi_-(zI_N-A)^{-1}B.
\label{mainform2}
\end{eqnarray}
Then $L_+$ and $L_-$ are right and left pseudo-spectral factors with respect to the imaginary line, with corresponding right and left pseudo-spectral factorizations
\begin{equation}
\C(z)=L_+^\sharp(z)L_+(z)\quad and\quad \C(z)=L_-^\sharp(z)L_-(z).
\label{spectralfacto}
\end{equation}
These factors are uniquely determined by the fact that they have the value $D^{1/2}$ at infinity.
\end{theorem}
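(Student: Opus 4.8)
\emph{Strategy.} The plan is to prove the theorem by the state-space (minimal factorization) method: I would capture the zeros of $\C$ through the \emph{associate matrix} $A^{\times}:=A-BD^{-1}C$, which, since $D>0$ is invertible, is the main matrix of a minimal realization $\C^{-1}(z)=D^{-1}-D^{-1}C(zI_N-A^{\times})^{-1}BD^{-1}$, and then obtain the factors $L_{\pm}$ from supporting projections attached to pairs of invariant subspaces. Because $D>0$, the positive square root $D^{1/2}$ and its inverse $D^{-1/2}$ are well defined, which is what pins the normalization $L_{\pm}(\infty)=D^{1/2}$. The symmetry $\C^{\sharp}=\C$ is encoded, via Proposition \ref{WW*}, by the invertible matrix $H$ with $HB=C^{*}$ intertwining $A$ and $-A^{*}$ (so $H^{*}=-H$ and $D=D^{*}$). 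A short computation using $HB=C^{*}$, $D=D^{*}$ and the adjoint relation $B^{*}H=-C$ then shows that the \emph{same} $H$ intertwines the associate matrix, $HA^{\times}=-(A^{\times})^{*}H$. In particular $\sigma(A)$ and $\sigma(A^{\times})$ are each symmetric under $\lambda\mapsto-\overline{\lambda}$, so their eigenvalues in the open right and open left half-planes are paired and only the purely imaginary eigenvalues are self-paired.

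\emph{Construction of the subspaces.} The heart of the proof is to produce the four invariant subspaces. Regarding $H$ as a nondegenerate indefinite inner product, $A$ and $A^{\times}$ are $H$-skew-adjoint, and condition (v) asks exactly that $\mathcal{M}_{\pm}$ and $\mathcal{M}^{\times}_{\pm}$ be $H$-Lagrangian, i.e. maximal $H$-neutral with $H\mathcal{M}=\mathcal{M}^{\perp}$ (so each has dimension $N/2$, and in particular $N$ is even). I would build $\mathcal{M}_{+}$ by taking the full spectral subspace of $A$ for $\sigma(A)\cap\{\Re z>0\}$ and completing it, inside the spectral subspace for the purely imaginary eigenvalues, by a maximal $A$-invariant $H$-neutral subspace; this yields an $H$-Lagrangian $A$-invariant subspace with $\sigma(A|_{\mathcal{M}_{+}})\subset\{\Re z\ge0\}$, and dually for $\mathcal{M}_{-}$ and for $\mathcal{M}^{\times}_{\pm}$ using $A^{\times}$. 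Existence and \emph{uniqueness} of these Lagrangian completions are governed by the sign characteristic of the $H$-skew-adjoint matrices at their imaginary eigenvalues; here the hypotheses $D>0$ and $\C(iy)\ge0$ enter decisively, forcing the partial multiplicities on $i\mathbb{R}$ to have the parity and sign pattern of a perfect square and thereby making the canonical neutral completion exist and be unique. The matching conditions \eqref{mathcond} would then follow from a standard complementarity argument for invariant subspaces of $A$ and $A^{\times}$ with separated spectra: trivial intersection together with complementary dimensions $N/2$.

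\emph{Factors and verification.} With the supporting projection $\Pi_{+}$ onto $\mathcal{M}^{\times}_{+}$ along $\mathcal{M}_{-}$ (and $\Pi_{-}$ onto $\mathcal{M}^{\times}_{-}$ along $\mathcal{M}_{+}$), I would define $L_{\pm}$ by \eqref{corona123}--\eqref{mainform2} and verify the factorizations \eqref{spectralfacto} directly. The identity $\C=L_{+}^{\sharp}L_{+}$ is a computation: writing the realization of $L_{+}^{\sharp}$ (state matrix $-A^{*}$, output $-B^{*}$, using $HB=C^{*}$) and forming the product with $L_{+}$ via the cascade formula \eqref{eq:RealizationProduct}, the intertwining relations $HA=-A^{*}H$, $HA^{\times}=-(A^{\times})^{*}H$, $HB=C^{*}$, together with the projection identities $\Pi_{+}A(I_N-\Pi_{+})=0$ and $(I_N-\Pi_{+})A^{\times}\Pi_{+}=0$, collapse the resulting $2N\times 2N$ realization onto the realization \eqref{minPhi} of $\C$. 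The pseudo-spectral property is then read off the spectral data: since $\ker\Pi_{+}=\mathcal{M}_{-}$ is $A$-invariant with $\sigma(A|_{\mathcal{M}_{-}})\subset\{\Re z\le0\}$ and $\operatorname{ran}\Pi_{+}=\mathcal{M}^{\times}_{+}$ is $A^{\times}$-invariant with $\sigma(A^{\times}|_{\mathcal{M}^{\times}_{+}})\subset\{\Re z\ge0\}$, the factor $L_{+}$ has neither poles nor zeros in the open left half-plane, while the direct-sum conditions \eqref{mathcond} say precisely that the local McMillan degrees of $L_{+}^{\sharp}$ and $L_{+}$ add up at each point of $i\mathbb{R}$, which is local minimality there in the sense of Definition \ref{dassin}. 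Uniqueness of $L_{\pm}$ subject to $L_{\pm}(\infty)=D^{1/2}$ follows from the uniqueness of the subspaces in (i)--(v), hence of $\Pi_{\pm}$.

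\emph{Main obstacle.} The delicate step is the existence and uniqueness of the $H$-Lagrangian invariant subspaces \emph{at the imaginary eigenvalues}, which is the ``pseudo'' part of the statement. Away from $i\mathbb{R}$ the right- and left-half-plane spectral subspaces are automatically $H$-neutral and paired, and one would obtain a genuine canonical spectral factorization; it is exactly the possible poles and zeros of $\C$ on $i\mathbb{R}$ that require controlling the sign characteristic of the $H$-skew-adjoint pencil and showing that the positivity hypotheses $D>0$ and $\C(iy)\ge0$ force a unique neutral completion. Translating the positivity of $\C$ on the imaginary line into this sign and parity information is the crux on which the whole construction rests.
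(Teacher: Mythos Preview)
The paper does not prove Theorem~\ref{ot200} at all: it is quoted verbatim (with minor notational changes) from \cite[Theorem~10.2, p.~199]{MR2663312}, and the paper explicitly says so just before the statement (``For completeness, and to ease the reading of the proof of Theorem~\ref{even}, we recall here \ldots\ the statement of the result from \cite[Theorem~10.2, p.~199]{MR2663312} which we will use''). So there is no in-paper proof to compare your proposal with; the theorem functions here purely as a black box whose formulas \eqref{corona123}--\eqref{mainform2} and uniqueness clause are invoked in the proof of Theorem~\ref{even}.

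That said, your sketch is a fair outline of the Bart--Gohberg--Kaashoek--Ran argument: the associate matrix $A^{\times}=A-BD^{-1}C$, the $H$-neutral (Lagrangian) invariant subspaces for the $H$-skew-adjoint matrices $A$ and $A^{\times}$, the supporting projections, and the identification of the ``pseudo'' difficulty with the sign characteristic at the purely imaginary eigenvalues. One small caution: the theorem as stated here calls $H$ Hermitian with $HA=A^{*}H$, whereas Proposition~\ref{WW*} (which you invoke) produces a skew-Hermitian $H$ with $HA=-A^{*}H$; these are the same object up to multiplication by $i$, but you should be consistent about which convention you carry through, since it affects whether you call $A$ $H$-selfadjoint or $H$-skew-adjoint and how condition~(v) reads. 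The genuinely hard step you correctly flag---existence and \emph{uniqueness} of the neutral completion at imaginary eigenvalues from $D>0$ and $\C(iy)\ge0$---is precisely what the cited monograph establishes and what the present paper does not attempt to reprove.
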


We now discuss some consequences of Theorem \ref{ot200}.

\begin{corollary}
\label{adelle}
If $\C$ is a polynomial so are the pseudo-spectral factors.
\end{corollary}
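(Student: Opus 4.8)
The plan is to show that neither $L_+$ nor $L_-$ has a pole anywhere in $\mathbb{C}$; being rational, they are then forced to be polynomials. By the left--right symmetry it suffices to argue for $L_+$. Three facts enter: (a) from $\C^\sharp(z)=(\C(-\overline{z}))^*$, the poles (resp.\ the zeros) of a rational function and those of its $\sharp$ are exactly the reflections $p\mapsto-\overline{p}$ of one another, and $\delta(\C^\sharp,-\overline{p})=\delta(\C,p)$, because neither the substitution $z\mapsto-\overline{z}$ nor the conjugate transposition changes the Smith--McMillan exponents; (b) the definition of a right pseudo-spectral factor: $L_+$ has no poles and no zeros in the open left half-plane, and the factorization $\C=L_+^\sharp L_+$ is locally minimal at every point of $i\mathbb{R}$; (c) since $\C$ is a polynomial it has no finite poles, so $\delta(\C,z_0)=0$ for all $z_0\in\mathbb{C}$.

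First I would dispose of the two open half-planes. By (b), $L_+$ is analytic and invertible throughout the open left half-plane, hence $L_+^\sharp(z)=(L_+(-\overline{z}))^*$ is analytic and invertible throughout the open right half-plane. If $L_+$ had a pole of order $k\ge1$ at a point $p$ with $\Re p>0$, write $L_+(z)=A_{-k}(z-p)^{-k}+\cdots$ with $A_{-k}\ne0$; since $L_+^\sharp(p)$ is invertible, the coefficient of $(z-p)^{-k}$ in $\C=L_+^\sharp L_+$ is $L_+^\sharp(p)A_{-k}\ne0$, so $\C$ would have a pole at $p$, contradicting (c). Thus $L_+$ has no pole in the open right half-plane, and none in the open left half-plane by (b).

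The remaining step --- which I expect to be the main obstacle --- is to exclude poles of $L_+$ on the imaginary axis. A mere order count does not suffice: for matrix functions a pole of $L_+$ at an imaginary point could a priori be killed in the product $L_+^\sharp L_+$ by a zero of $L_+^\sharp$ there, and it is exactly local minimality that forbids this. Assume $L_+$ has a pole at $iy_0$; then $\delta(L_+,iy_0)\ge1$, and since $-\overline{iy_0}=iy_0$, fact (a) gives $\delta(L_+^\sharp,iy_0)=\delta(L_+,iy_0)\ge1$. Local minimality of $\C=L_+^\sharp L_+$ at $iy_0$ gives $\delta(\C,iy_0)=\delta(L_+^\sharp,iy_0)+\delta(L_+,iy_0)\ge2$, contradicting $\delta(\C,iy_0)=0$. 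Hence $L_+$ has no pole on $i\mathbb{R}$, so $L_+$ has no pole in $\mathbb{C}$ at all and is a polynomial; the identical argument gives the same for $L_-$.

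Alternatively, when $\C(0)>0$ one can route the proof through Theorem \ref{ot200}: the change of variable $z\mapsto1/z$ sends $\C$ to $\Psi(z)=\C(1/z)$, which is even, analytic at infinity, nonnegative on $i\mathbb{R}$, with $\Psi(\infty)=\C(0)>0$; the matrix $A$ in a minimal realization of $\Psi$ is then nilpotent, so the factors \eqref{corona123}--\eqref{mainform2} are polynomials in $1/z$ and $L_\pm(z)$ arise from them by $z\mapsto1/z$. I would nevertheless present the direct argument above: it uses only the defining properties of a pseudo-spectral factorization and covers every polynomial $\C$; one has merely to remember that, in the polynomial case, the normalization of $L_\pm$ "at infinity" from Theorem \ref{ot200} must be replaced by a normalization of their leading coefficients.
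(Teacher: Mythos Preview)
Your main argument is correct and is genuinely different from the paper's proof. The paper disposes of the corollary in one line: ``In a minimal realization $A$ is then nilpotent, and the formulas for $L_\pm$ then give also polynomials.'' That is precisely the realization-based route you sketch in your final paragraph, and you have in fact identified the technical wrinkle the paper glosses over: a non-constant polynomial is not analytic at infinity, so one must first pass to $\Psi(z)=\C(1/z)$ to obtain a realization with nilpotent $A$, read off that the factors from \eqref{corona123}--\eqref{mainform2} are polynomials in $1/z$, and then undo the substitution.

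Your primary proof, by contrast, never touches the explicit formulas. It uses only the defining data of a right pseudo-spectral factor---no poles or zeros of $L_+$ in the open left half-plane, local minimality of $\C=L_+^\sharp L_+$ on $i\mathbb R$---together with the reflection symmetry $\delta(L_+^\sharp,-\overline{p})=\delta(L_+,p)$ and the vanishing of all finite local degrees of a polynomial. This buys you a cleaner, coordinate-free argument that needs no normalization at a distinguished point and no positivity hypothesis such as $\C(0)>0$. The paper's approach, on the other hand, has the virtue of exhibiting the polynomial factors explicitly from the realization data. Both are valid; for the bare statement of the corollary your direct argument is the more transparent one, while the paper's line is the natural by-product of having Theorem~\ref{ot200} already in hand.
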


\begin{proof}
In a minimal realization $A$ is then nilpotent, and the formulas for $L_\pm$ then give also
polynomials.
\end{proof}

For the following corollary, see also \cite[Proposition 5.2, p. 3961]{MR3034510}.

\begin{corollary}
\label{cor567}
Let $L_1,\ldots, L_U$ be rational $\mathbb C^{n\times n}$-valued functions analytic at infinity and assume that
\begin{equation}
\sum_{u=1}^U(L_u(\infty))^*L_u(\infty)>0.
\end{equation}
Then, there exist $\mathbb C^{n\times n}$-valued rational functions $L_\pm(z)$ analytic at infinity, with no poles and zeros in $\mathbb C_r$ and $\mathbb C_\ell$
(the left open half-plane), respectively, such that
\begin{equation}\label{225}
\sum_{u=1}^UL_u^\sharp(z)L_u(z)=L^\sharp(z) L(z).
\end{equation}
\end{corollary}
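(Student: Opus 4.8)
The plan is to set
\[
\C(z)=\sum_{u=1}^U L_u^\sharp(z)L_u(z)
\]
and to verify that $\C$ meets the hypotheses of Theorem \ref{ot200}; the functions $L_\pm$ demanded in the statement are then exactly the left and right pseudo-spectral factors produced there (relabeled, if necessary, to match the half-plane conventions used in \eqref{225}).

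First I would record the routine algebraic facts coming straight from \eqref{notthesharpest}: for rational $\mathbb C^{n\times n}$-valued functions one has $(FG)^\sharp=G^\sharp F^\sharp$ and $(F^\sharp)^\sharp=F$. Hence each summand $L_u^\sharp L_u$, and therefore $\C$, is even in the sense of \eqref{evennn}; moreover $\C$ is rational and analytic at infinity because every $L_u$ is. Evaluating at infinity gives $\C(\infty)=\sum_{u=1}^U (L_u(\infty))^*L_u(\infty)$, which is $>0$ by hypothesis (in particular $\det\C\not\equiv0$); this is precisely the condition $D=\C(\infty)>0$ required in Theorem \ref{ot200}.

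Next I would check positivity on the imaginary axis. At a point $z=iy$ with $z+\overline z=0$ at which all the relevant functions are analytic, the implication recorded just after \eqref{notthesharpest} yields $L_u^\sharp(iy)=(L_u(iy))^*$, whence $\C(iy)=\sum_{u=1}^U (L_u(iy))^*L_u(iy)\ge 0$; this holds at all but finitely many points of $i\mathbb R$, which is exactly what Theorem \ref{ot200} asks for, and since $\C$ is even the matrix $\C(iy)$ is automatically Hermitian where defined. With $D=\C(\infty)>0$ and $\C(iy)\ge 0$, Theorem \ref{ot200} applies and produces $\mathbb C^{n\times n}$-valued rational functions analytic at infinity, without poles or zeros in the open left, respectively right, half-plane, and satisfying $\C(z)=L_+^\sharp(z)L_+(z)=L_-^\sharp(z)L_-(z)$. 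Reading \eqref{225} with $L$ taken to be one of these two factors concludes the proof.

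There is essentially no deep obstacle here; the two points needing a little care are (i) that the boundary-value identity $L_u^\sharp(iy)=(L_u(iy))^*$ is valid only at points of analyticity, so the positivity must be stated "for $y\in\mathbb R$ where $\C(iy)$ is defined", exactly as in Theorem \ref{ot200}, and (ii) aligning the half-plane labels of the statement with the "right/left pseudo-spectral" terminology of Definition \ref{dassin}, which is a purely cosmetic relabeling.
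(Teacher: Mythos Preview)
Your proposal is correct and is exactly the intended argument: the paper states Corollary~\ref{cor567} as an immediate consequence of Theorem~\ref{ot200} (without writing out a proof), and your verification that $\C=\sum_u L_u^\sharp L_u$ is even, rational, analytic at infinity with $\C(\infty)>0$, and satisfies $\C(iy)\ge 0$ where defined, is precisely what is needed to invoke that theorem. Your remarks (i) and (ii) on the finitely many exceptional points and on the half-plane labeling are apt and match the paper's conventions.
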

\noindent
Note the condition in Theorems 1.3 and 2.8 the assumption that the matrix
$\lim\limits_{z~\rightarrow~\infty}\Phi(z)$ is positive
definite (or even $I_n$), was needed to simplify the
treatment, but it is neither a prerequisite to factorization
nor to realization.
\vskip 0.2cm

\noindent
For example $\Phi(z)=-\frac{1}{z^2}$ is a $\mathcal{GPE}$
function vanishing at infinity. It admits the factorization
in \eqref{225}
with $L=\frac{1}{z}$ and a minimal state space realization
\[
R_{\Phi}=\left(\begin{array}{cc|c}
0&1&0\\0&0&1\\
\hline
1&0&0
\end{array}\right),
\]
which is of the form \eqref{eq:R1}.
\vskip 0.2cm

\noindent
We now have the following corollary to Theorem  \ref{ot200}:

\begin{corollary}
\label{cycy}
Let $\C\in\mathcal{GPE}$ and analytic at infinity. Then there exist factorizations $\C(z)=L_+^\sharp(z)L_+(z)=
L_-^\sharp(z)L_-(z)$, where the poles and zeros of $L_+$ (resp. $L_-$) are in
the closed left half-plane (resp. the closed right half-plane). When $\C$ is a polynomials so are the factors $L_\pm(z).$
\end{corollary}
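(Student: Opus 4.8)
The idea is to deduce the statement from Theorem~\ref{ot200} by an approximation argument, the only obstruction being that $D:=\C(\infty)$ --- which is self-adjoint and, letting $y\to\infty$ in $\C(iy)\ge 0$, positive semidefinite --- need not be strictly positive. So, for $\e>0$, I would first set
\[
\C_\e(z):=\C(z)+\e I_n.
\]
From \eqref{notthesharpest} we have $(\e I_n)^\sharp=\e I_n$, hence $\C_\e^\sharp=\C_\e$; moreover $\C_\e(iy)=\C(iy)+\e I_n\ge\e I_n>0$, so $\C_\e\in\mathcal{GPE}$, it is analytic at infinity, and $\C_\e(\infty)=D+\e I_n>0$. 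If $\C(z)=D+C(zI_N-A)^{-1}B$ is a minimal realization, then $\C_\e(z)=(D+\e I_n)+C(zI_N-A)^{-1}B$ is again minimal (minimality involves only the pairs $(C,A)$ and $(A,B)$, cf.~\eqref{CABA}), with the same matrices $A,B,C$ and the same McMillan degree $N$.

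I would then apply Theorem~\ref{ot200} to $\C_\e$; relabelling its two factors so as to match the convention of the present corollary, it yields rational functions $L_{+,\e},L_{-,\e}$ with $\C_\e=L_{\pm,\e}^\sharp L_{\pm,\e}$, with $L_{\pm,\e}(\infty)=(D+\e I_n)^{1/2}$, with $L_{+,\e}$ (resp.~$L_{-,\e}$) free of poles and zeros in the open right (resp.~open left) half-plane, and --- by the realization formulas \eqref{corona123}--\eqref{mainform2}, whose state matrix is $A$ --- of McMillan degree $\le N$, with all poles among the eigenvalues of $A$. In particular the poles of $L_{+,\e}$ all lie in the fixed finite set $P:=\{z\in\sigma(A):\Re z\le 0\}$, which is independent of $\e$ and does not contain $\infty$.

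The remaining, and main, point is to let $\e\downarrow 0$. Since $L_{+,\e}^\sharp(iy)=L_{+,\e}(iy)^*$, one has $\|L_{+,\e}(iy)\|^2=\|\C_\e(iy)\|\le\|\C(iy)\|+1$ for $0<\e\le 1$ and every $iy$ that is not a pole of $\C$; combined with the facts that all poles of $L_{+,\e}$ lie in the fixed set $P$ and that $L_{+,\e}(\infty)=(D+\e I_n)^{1/2}\to D^{1/2}$, a normal families argument shows that $\{L_{+,\e}\}_{0<\e\le 1}$ is precompact, so along some sequence $\e_k\downarrow 0$ (no uniqueness being claimed here, the choice is immaterial) one has $L_{+,\e_k}\to L_+$ locally uniformly on the complement of $P$ and at $\infty$; as $P$ is finite, $L_+$ is a rational function. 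Passing to the limit in $\C_{\e_k}=L_{+,\e_k}^\sharp L_{+,\e_k}$ (the operation $(\cdot)^\sharp$ and pointwise multiplication being continuous) gives $\C=L_+^\sharp L_+$. The poles of $L_+$ lie in $P$, hence in the closed left half-plane; and since each $\det L_{+,\e_k}$ is zero-free in the open right half-plane, Hurwitz's theorem gives that $\det L_+$ is either identically zero or zero-free there, so --- leaving aside the degenerate case $\det\C\equiv 0$ --- the zeros of $L_+$ also lie in the closed left half-plane (in the limit a zero of $L_{+,\e_k}$ can only move onto $i\R$ or escape to $\infty$). The factor $L_-$ is obtained symmetrically. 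Finally, if $\C$ is a polynomial then $A$ is nilpotent; $\C_\e$ has the same $A$, so by Corollary~\ref{adelle} each $L_{\pm,\e}$ is a polynomial of degree $\le N-1$, and a locally uniform limit of polynomials of bounded degree is again a polynomial.

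The precompactness of $\{L_{+,\e}\}$ as $\e\downarrow 0$ is, I expect, the delicate step. In the formulas \eqref{corona123}--\eqref{mainform2} the factor $(D+\e I_n)^{-1/2}$ blows up when $D$ is singular, so one cannot simply pass to the limit in them; what must be verified is that the $A^\times_\e$-invariant subspaces occurring in Theorem~\ref{ot200} for $\C_\e$ (equivalently, the projections $\Pi_{\pm,\e}$) converge in the Grassmannian as $\e\to 0$, and that the growth of $(D+\e I_n)^{-1/2}$ is exactly compensated so that $(D+\e I_n)^{-1/2}C\Pi_{\pm,\e}$ remains bounded --- this should follow from the symmetry $C=B^*H$ together with the orthogonality conditions~(v) of Theorem~\ref{ot200}, which force $C\Pi_{\pm,\e}$ to vanish on $\ker D$ to order $\e^{1/2}$. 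Alternatively, one can avoid the normal families step and build $L_\pm$ directly from the factorization results for kernels with finitely many negative squares already invoked in the proof of Corollary~\ref{cy789} (which do not require $D>0$), followed by a classical spectral factorization of the positive-definite middle factor and a normalization; this replaces the limiting argument by bookkeeping about the locations of poles and zeros.
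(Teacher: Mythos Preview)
Your approach is essentially the paper's: perturb to $\C_\e=\C+\e I_n$, apply Theorem~\ref{ot200}, and let $\e\downarrow 0$ along a subsequence. The one place where your argument is less precise than the paper's is exactly the compactness step you flag. Rather than a normal-families argument (which is awkward here because poles of $L_{+,\e}$ may lie on $i\R$, so local boundedness on a domain is not immediate), the paper exploits controllability of $(A,B)$ directly: one can choose finitely many points $y_1,\dots,y_N\in\R$ at which $\C$ is regular and such that the columns of $(iy_1 I_N-A)^{-1}B,\dots,(iy_N I_N-A)^{-1}B$ span $\mathbb C^N$ (this is precisely \eqref{CABA}). The identity $L_{+,\e}(iy_j)^*L_{+,\e}(iy_j)=\C_\e(iy_j)\le I_n+\C(iy_j)$ then bounds $C_\e(iy_jI_N-A)^{-1}B$ uniformly in $\e$, and the full-rank condition forces $C_\e:=(D+\e I_n)^{-1/2}C\Pi_{\pm,\e}$ itself to be bounded, hence subconvergent. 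This is the concrete mechanism behind your heuristic that ``the growth of $(D+\e I_n)^{-1/2}$ is exactly compensated''; no analysis of the $\e$-dependence of the invariant subspaces in Theorem~\ref{ot200} is needed.
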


\begin{proof}
When $\C(\infty)>0$ this is just the previous theorem. Assuming $\C(\infty)$ degenerate, we apply Theorem \ref{ot200} to $\C_\e(z)=\e I_n+\C(z)$, to obtain a family of pseudo-spectral factors
$L_{+,\e}(z)$ satisfying
\begin{equation}
\label{monique}
\e I_n+\C(z)=L_{+,\e}^\sharp(z)L_{+,\e}(z),\quad \e>0.
\end{equation}
We note from formula \eqref{corona123} that
\begin{equation}
L_{+,\e}(z)=\sqrt{\e}I_n+C_\e(zI_N-A)^{-1}B,\quad{\rm where}\quad C_{\e}=\frac{1}{\sqrt{\e}}C\Pi_{+,\e}
\end{equation}
where $\Pi_{+,\e}$ is the projection corresponding to the spaces $\mathcal M_{-,\e}$ and $\mathcal M_{+\e}^\times$ built from $\e I_n+\C(z)$ as in the theorem.
Since the pair $(A,B)$ is controllable (see \eqref{CABA}), we can take $N$ points $y_1,\ldots, y_N$ where $\C(iy_j)$ is well defined and such that $\mathbb C^N$ is
spanned by the columns of the matrices $(y_jI_N-A)^{-1}B$, $j=1,\ldots, N$, i.e.
\begin{equation}
\label{rivka}
\mathbb C^N=\mbox{\text{\rm linear span}}\left\{{\rm ran}\, (y_jI_N-A)^{-1}B,\, j=1,\ldots, N\right\}.
\end{equation}
We restrict $\e\in [0,1]$. We have
\begin{equation}
\label{barbara}
L_{+,\e}(iy_j)^*L_{+,\e}(iy_j)=\e I_n+\C(iy_j)\le I_n+\C(iy_j),\quad \e\in[0,1],\quad j=1,,\ldots, N.
\end{equation}
Thus
\begin{equation}
L_{+,\e}(iy_j)^*L_{+,\e}(iy_j)=\e I_n+\C(iy_j)\le M,\quad \e\in[0,1],\quad j=1,,\ldots, N,
\end{equation}
with $M=I_n+\sum_{k=1}^N \C(iy_k)$. So the $N$ matrices $C_\e(y_jI_N-A)^{-1}B$ are uniformly bounded in norm. By taking converging subsequences we can assume that the limits
\[
\lim_{\e\rightarrow 0} C_{\e}(y_jI_N-A)^{-1}B =H_j,\quad j=1,\ldots N
\]
exist. In view of the full rank hypothesis \eqref{rivka}, this defines in a unique way $X\in\mathbb C^{n\times N}$ such that
\[
  X\begin{pmatrix}(y_1I_N-A)^{-1}B&(y_2I_N-A)^{-1}B&\cdots&(y_NI_N-A)^{-1}B\end{pmatrix}=\begin{pmatrix}H_1&H_2&\cdots &H_N\end{pmatrix},
\]
and $X=\lim_{\e\rightarrow 0}C_{\e}$ because of the full rank hypothesis. This conclude the proof of the existence of $L_+$. The claim for $L_-$ is proved in the
same way, and the claim on polynomials follows from the formulas for the factors, as in Corollary \ref{adelle}.
\end{proof}

\begin{corollary}
\label{cor567890}
Let $L_1,\ldots, L_U$ be rational $\mathbb C^{n\times n}$-valued functions analytic at infinity.
Then, there exist $\mathbb C^{n\times n}$-valued rational functions $L_\pm(z)$ analytic at infinity, with no poles and zeros in $\mathbb C_r$ (resp. in $\mathbb C_\ell$) and such that
\begin{equation}
\label{factor56789}
\sum_{u=1}^UL_u^\sharp(z)L_u(z)=L_+^\sharp(z) L_+(z)=L_-^\sharp(z) L_-(z).
\end{equation}
When $L_1,\ldots, L_U$ are polynomial so are the factors $L_\pm(z).$
\end{corollary}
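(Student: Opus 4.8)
The plan is to apply Corollary \ref{cycy} to the function $\C(z)=\sum_{u=1}^U L_u^\sharp(z)L_u(z)$. First I would check that $\C$ is a rational $\mathbb C^{n\times n}$-valued function analytic at infinity: each $L_u$ is rational and analytic at infinity, and the operation $(\cdot)^\sharp$, together with multiplication and addition of such functions, preserves rationality and analyticity at infinity (with $\C(\infty)=\sum_u L_u(\infty)^*L_u(\infty)$).

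Next I would verify that $\C\in\mathcal{GPE}$. For evenness, recall that $(\cdot)^\sharp$ is an involution which reverses products, i.e. $(FG)^\sharp=G^\sharp F^\sharp$ and $(F^\sharp)^\sharp=F$ (this follows at once from $F^\sharp(z)=(F(-\overline z))^*$); hence each summand satisfies $(L_u^\sharp L_u)^\sharp=L_u^\sharp(L_u^\sharp)^\sharp=L_u^\sharp L_u$, and summing gives $\C^\sharp=\C$, which is \eqref{evennn}. For the positivity on the imaginary axis, note that $iy+\overline{iy}=0$, so by the remark following \eqref{notthesharpest} we have $L_u^\sharp(iy)=L_u(iy)^*$ at every point $iy$ where $L_u$ is analytic; therefore $\C(iy)=\sum_u L_u(iy)^*L_u(iy)\ge 0$ wherever defined. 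Thus $\C$ is even and takes positive values on $i\mathbb R$, so $\C\in\mathcal{GPE}$ (cf. Corollary \ref{cy789}(a)).

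Now I would invoke Corollary \ref{cycy} for $\C$. Note that Corollary \ref{cor567} cannot be used here, since $\C(\infty)=\sum_u L_u(\infty)^*L_u(\infty)$ need not be positive definite (it may even vanish), and this is precisely the degenerate situation at infinity handled by Corollary \ref{cycy}. That corollary produces $\mathbb C^{n\times n}$-valued rational functions $L_+$ and $L_-$, analytic at infinity, whose poles and zeros lie in the closed left half-plane and in the closed right half-plane respectively --- equivalently, $L_+$ (resp. $L_-$) has no poles or zeros in $\mathbb C_r$ (resp. in $\mathbb C_\ell$) --- and such that $\C(z)=L_+^\sharp(z)L_+(z)=L_-^\sharp(z)L_-(z)$. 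This is exactly \eqref{factor56789}. Finally, if every $L_u$ is a polynomial then $\C$ is a polynomial, and the last assertion of Corollary \ref{cycy} gives that $L_\pm$ are polynomials as well.

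Since the statement is an immediate consequence of Corollary \ref{cycy}, there is no genuine obstacle; the only points requiring care are the verification that $\C\in\mathcal{GPE}$ (evenness via the product-reversing property of $\sharp$, and positivity on $i\mathbb R$ via $L_u^\sharp(iy)=L_u(iy)^*$) and the observation that $\C(\infty)$ may be degenerate, which is exactly why the stronger Corollary \ref{cycy}, rather than Corollary \ref{cor567}, must be invoked.
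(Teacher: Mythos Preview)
Your proposal is correct and matches the paper's intent: the corollary is stated without proof immediately after Corollary~\ref{cycy}, and is meant to follow by applying that result to $\C(z)=\sum_{u=1}^U L_u^\sharp(z)L_u(z)$, exactly as you argue. Your explicit verification that $\C\in\mathcal{GPE}$ and your remark that $\C(\infty)$ may be degenerate (forcing the use of Corollary~\ref{cycy} rather than Corollary~\ref{cor567}) fill in precisely the details the paper leaves implicit.
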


We note that the factors $L_\pm$ need not be invertible in the preceding two corollaries.

\section{Interpolation}
\setcounter{equation}{0}
\label{INTER}
We give a number of interpolation results which are corollaries of the previous discussion.

\begin{proposition}
Let $w_1,\ldots, w_N\in\mathbb C$ such that $w_u+\overline{w_v}\not=0$ for all $u,v\in\left\{1,\ldots, N\right\}$ (and in particular the points belong to $\mathbb C\setminus i\mathbb R$)
and let $\xi_1,\eta_1,\ldots, \xi_N,\eta_N\in\mathbb C^n$, with $\xi_u\not=0$, $u=1,\ldots, N$.
There exists a $\mathbb C^{n\times n}$-valued rational function $L$, which can be chosen to
be  a polynomial and such that
\begin{equation}
\label{factor5678}
L^\sharp(w_i)L(w_i)\xi_i=\eta_i,\quad i=1,\ldots, N.
\end{equation}
\label{cro123321}
\end{proposition}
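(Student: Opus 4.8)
The plan is to reduce the interpolation problem to a pseudo-spectral factorization of a suitably constructed $\mathcal{GPE}$ function, so that Corollary \ref{cor567890} (or Corollary \ref{cycy}) directly produces the desired polynomial factor $L$. First I would build, for each index $i$, an elementary rational (indeed polynomial, or simple rational) function $L_i$ whose behaviour at the single point $w_i$ encodes the datum $(\xi_i,\eta_i)$. Concretely, observe that a rank-one term of the form $L_i^\sharp(w_i)L_i(w_i)$ can be prescribed freely on the one-dimensional space $\mathbb{C}\xi_i$: since $\xi_i\neq 0$, one can choose a polynomial $L_i$ with $L_i^\sharp(w_i)L_i(w_i)\xi_i = \eta_i$ --- for instance by first solving the scalar/finite-dimensional linear problem $M_i\xi_i=\eta_i$ for some matrix $M_i$ (e.g. $M_i=\eta_i\xi_i^*/\|\xi_i\|^2$ plus a correction making it of the form $K^\sharp(w_i)K(w_i)$), then realizing $M_i$ as the value at $w_i$ of a function of the form $K_i^\sharp K_i$ via a low-degree polynomial interpolation. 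The key flexibility is that we do not need $L$ itself to equal any particular $L_i$; we only need the value of $L^\sharp L$ at $w_i$ to act correctly on $\xi_i$.

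Next I would assemble these pieces. The cleanest route: for each $i$ pick a polynomial $p_i$ with $p_i(w_i)=1$ and $p_i(w_j)=0$ for $j\neq i$ (a Lagrange-type interpolation polynomial in $z$), and also arrange $p_i^\sharp(w_j)=0$ for $j\neq i$; since $w_u+\overline{w_v}\neq 0$ forces $w_j\neq -\overline{w_k}$, the zero sets can be chosen compatibly so that $p_i^\sharp(w_i)p_i(w_i)=1$ while all cross terms vanish at the nodes. Then set $L_u = p_u K_u$ for appropriate constant (or low-degree) matrices/functions $K_u$, and consider
\[
\C(z)=\sum_{u=1}^N L_u^\sharp(z)L_u(z).
\]
By construction $\C$ is a polynomial $\mathcal{GPE}$ function (it is manifestly of the form $\sum L_u^\sharp L_u$), and at each node $\C(w_i)\xi_i = L_i^\sharp(w_i)L_i(w_i)\xi_i = M_i\xi_i$, which we have arranged to equal $\eta_i$. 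Now apply Corollary \ref{cor567890}: there is a polynomial $L_+$ (analytic at infinity, no poles or zeros in $\mathbb{C}_r$) with $\C(z)=L_+^\sharp(z)L_+(z)$, and this $L:=L_+$ satisfies \eqref{factor5678}.

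The main obstacle I anticipate is the consistency of the matrix data: one must check that the target value $M_i$ on the line $\mathbb{C}\xi_i$ can actually be realized as $K^\sharp(w_i)K(w_i)$ --- i.e. as a value of a genuine $\mathcal{GPE}$-type building block --- rather than as an arbitrary matrix, and that these local prescriptions do not conflict at infinity or force $\det L\equiv 0$ in an unwanted way (the statement only asks for a rational/polynomial $L$, not an invertible one, which eases this). The rank-one nature of the constraint on each $\xi_i$ is what saves us: we are free to complete $M_i$ to a positive (semi)definite matrix and to perturb by a nonnegative amount, so a factorization $M_i = K_i^\sharp(w_i) K_i(w_i)$ always exists, and since $w_i \notin i\mathbb{R}$ the sharp operation introduces no obstruction at $w_i$ itself. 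Once the building blocks are in hand, the rest is the bookkeeping of Lagrange interpolation together with a direct appeal to the factorization corollary, so no genuinely hard analytic step remains. An alternative, slightly slicker packaging is to skip the explicit $p_i$ and instead invoke Corollary \ref{cycy} on the degenerate-at-infinity case, but the Lagrange construction above keeps everything polynomial and elementary.
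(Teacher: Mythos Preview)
Your strategy is exactly the paper's: build Lagrange-type blocks so that $\C=\sum_u P_u^\sharp P_u$ is a $\mathcal{GPE}$ polynomial hitting the data, then apply the factorization corollary to extract a single polynomial $L$. The paper's execution is cleaner on the one point you leave vague. Instead of first choosing a matrix $M_u$ with $M_u\xi_u=\eta_u$ and then trying to realize $M_u$ as a value $K_u^\sharp(w_u)K_u(w_u)$, it imposes directly the three interpolation conditions
\[
P_u(w_j)\xi_j=0\ (j\neq u),\qquad P_u(w_u)\xi_u=\eta_u,\qquad P_u(-\overline{w_u})=I_n
\]
on a single matrix polynomial $P_u$. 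The last condition gives $P_u^\sharp(w_u)=(P_u(-\overline{w_u}))^*=I_n$, so $P_u^\sharp(w_u)P_u(w_u)\xi_u=\eta_u$ with no factorization of any $M_u$ ever needed; and the hypothesis $w_u+\overline{w_v}\neq 0$ is precisely what makes the extra node $-\overline{w_u}$ distinct from all the $w_j$, so these $N+1$ linear conditions on $P_u$ are compatible. Note also that your fallback ``complete $M_i$ to a positive semidefinite matrix'' is not available in general: if $\xi_i^*\eta_i\notin[0,\infty)$ there is no positive $M_i$ with $M_i\xi_i=\eta_i$ (this is exactly the lemma the paper invokes when purely imaginary nodes are later allowed). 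What actually carries your argument is your other remark, that $w_i\notin i\mathbb R$ makes $K^\sharp(w_i)K(w_i)=K(-\overline{w_i})^*K(w_i)$ unconstrained --- and the paper's condition $P_u(-\overline{w_u})=I_n$ is the direct way to exploit this. Your extra requirement $p_i^\sharp(w_j)=0$ for $j\neq i$ is also unnecessary: $P_u(w_j)\xi_j=0$ already kills the $u$-th summand at $w_j$ from the right.
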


\begin{proof}
For every $u\in\left\{1,\ldots, N\right\}$ build a $\mathbb C^{n\times n}$-valued polynomial $P_u(z)$ such that
\begin{equation}
\begin{split}
P_u(w_j)\xi_j&=0,\quad j\not=u,\\
P_u(w_u)\xi_u&=\eta_u,\\
P_u(-\overline{w_u})&=I_n.
\end{split}
\end{equation}
It suffices to take
\[
\C(z)=\sum_{u=1}^NP_u^\sharp(z)P_u(z).
\]
$\C$ satisfies the interpolation conditions and is in ${\mathcal{GPE}}$. So it can be written in the factored form as in \eqref{factor56789}.
\end{proof}

\begin{remark}
{\rm The polynomial $P_u^\sharp P_u$ is the counterpart of the classical Lagrange interpolation polynomial, when one requires positivity on the imaginary axis.}
\end{remark}

When one point, say $w_1$, belongs to the imaginary line we have in particular the condition
\[
L(w_1)^*L(w_1)\xi_1=\eta_1.
\]
Thus, to allow the case in which there are imaginary points  we need
the following simple, but here crucial, fact:

\begin{lemma}
Let $\xi,\eta\in\mathbb C^n$. There exists a positive matrix $A$ such that $A\xi=\eta$ if and only if $\xi^*\eta> 0$ or $\eta=0$.
\end{lemma}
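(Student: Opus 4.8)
The plan is to characterize exactly when the linear equation $A\xi=\eta$ can be solved with $A$ a positive (semi-)definite matrix. I would split into the two directions and treat the degenerate cases $\xi=0$ and $\eta=0$ carefully, since the statement as phrased allows $\eta=0$ (take $A=0$, or any positive matrix annihilating $\xi$ — though if $\xi\neq 0$ one needs a positive matrix with $\xi$ in its kernel, which exists, e.g. the orthogonal projection onto $\xi^\perp$, or simply $A=0$ if ``positive'' is meant in the semi-definite sense used throughout the paper). For the forward direction, if $A\ge 0$ and $A\xi=\eta$, then $\xi^*\eta=\xi^*A\xi\ge 0$, and this quantity is $0$ precisely when $A^{1/2}\xi=0$, i.e. $\eta=A\xi=A^{1/2}(A^{1/2}\xi)=0$. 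Hence either $\eta=0$ or $\xi^*\eta>0$, which is the stated necessary condition.

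For the converse, suppose first $\xi^*\eta>0$ (so in particular $\xi\neq 0$ and $\eta\neq 0$). The natural guess is the rank-one operator $A=\dfrac{\eta\eta^*}{\xi^*\eta}$. It is positive semidefinite because $\eta\eta^*\ge 0$ and $\xi^*\eta>0$ is a positive scalar, and it satisfies $A\xi=\dfrac{\eta(\eta^*\xi)}{\xi^*\eta}=\dfrac{\eta\,\overline{\xi^*\eta}}{\xi^*\eta}=\eta$ since $\xi^*\eta=\overline{\eta^*\xi}$ is real and positive, so $\overline{\xi^*\eta}=\xi^*\eta$. (If one wants $A$ strictly positive definite one can add a small positive multiple of the projection onto $\eta^\perp$, but semidefiniteness is all that is needed here.) In the remaining case $\eta=0$: if the convention is that ``positive'' permits $A=0$ then we are done immediately; if one insists on a genuinely positive semidefinite matrix with a prescribed kernel vector, take $A$ to be the orthogonal projection onto $\xi^\perp$ when $\xi\neq 0$, and $A=I_n$ (or anything) when $\xi=0$.

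There is essentially no obstacle here; the only subtlety is bookkeeping the edge cases and making sure the sign conventions (``positive'' meaning $\ge 0$, as declared in the Remark early in the paper) are respected throughout. I would present it as: ``If $A\ge 0$ with $A\xi=\eta$, then $\xi^*\eta=\xi^*A\xi\ge 0$, with equality iff $A^{1/2}\xi=0$ iff $\eta=0$. Conversely, if $\xi^*\eta>0$ set $A=\eta\eta^*/(\xi^*\eta)$; if $\eta=0$ take $A=0$.'' This finishes the lemma and, combined with Proposition~\ref{cro123321} and Corollary~\ref{cor567890}, lets one handle interpolation nodes on the imaginary axis.
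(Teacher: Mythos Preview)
Your proof is correct and follows essentially the same route as the paper: the necessity via $\xi^*\eta=\xi^*A\xi\ge 0$ with equality forcing $A\xi=0$, and the rank-one construction $A=\eta\eta^*/(\xi^*\eta)$ for the converse. The only cosmetic difference is that for $\eta=0$ the paper explicitly writes down the projection $A=I_n-\xi\xi^*/(\xi^*\xi)$ where you first suggest $A=0$; your use of $A^{1/2}$ to justify that $\xi^*A\xi=0$ implies $A\xi=0$ is a touch more explicit than the paper's one-line appeal to $A\ge 0$.
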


\begin{proof}
The condition is necessary since
\[
0\le \xi^*A\xi=\xi^*\eta.
\]
To study the converse we first consider the case $\xi^*\eta=0$. We then have $\xi^*A\xi=0$ and since $A\ge 0$,
$A\xi=0$ and hence $\eta=0$. We can take $A=I-\frac{\xi\xi^*}{\xi^*\xi}$.
If $\xi^*\eta>0$, the matrix
\[
A=\frac{1}{\eta^*\xi}\eta\eta^*
\]
answers the question.
\end{proof}

\begin{corollary}
Let $w_1,\ldots, w_N\in\mathbb C$ and let $\xi_1,\eta_1,\ldots, \xi_N,\eta_N\in\mathbb C^n$, with $\xi_u\not=0$, $u=1,\ldots, N$.
There exists a $\mathbb C^{n\times n}$-valued rational function $L$, which can be chosen to
be  a polynomial and such that
\begin{equation}
\label{factor5678}
L^\sharp(w_i)L(w_i)\xi_i=\eta_i,\quad i=1,\ldots, N
\end{equation}
if and only if $\xi_u^*\eta_u\ge 0$ for each $u$ such that $w_u\in i\mathbb R$.
\end{corollary}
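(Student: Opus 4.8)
The plan is to treat the two directions separately, reducing each to the material already assembled above. For the necessity direction, suppose such an $L$ exists and $w_u\in i\mathbb R$. Then by the remark following \eqref{oddd}, $L^\sharp(w_u)=L(w_u)^*$, so the interpolation condition \eqref{factor5678} reads $L(w_u)^*L(w_u)\xi_u=\eta_u$. The matrix $A=L(w_u)^*L(w_u)$ is positive semidefinite, so by the Lemma we must have $\xi_u^*\eta_u\ge 0$ (the case $\eta_u=0$ being allowed). This is immediate once one invokes the Lemma.

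For the sufficiency direction, I would split the points into those off the imaginary axis and those on it, say $w_1,\ldots,w_M\notin i\mathbb R$ and $w_{M+1},\ldots,w_N\in i\mathbb R$. For each off-axis point $w_u$ one has $w_u+\overline{w_u}\ne 0$, so the construction in Proposition~\ref{cro123321} applies verbatim: build polynomials $P_u$ with $P_u(w_j)\xi_j=0$ for $j\ne u$, $P_u(w_u)\xi_u=\eta_u$, and $P_u(-\overline{w_u})=I_n$. For each on-axis point $w_u$, the hypothesis $\xi_u^*\eta_u\ge 0$ together with the Lemma produces a positive matrix $A_u$ with $A_u\xi_u=\eta_u$; I then need a polynomial $Q_u$ with $Q_u^\sharp(w_j)Q_u(w_j)\xi_j=0$ for $j\ne u$ and $Q_u^\sharp(w_u)Q_u(w_u)\xi_u=\eta_u$. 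Writing $A_u=R_u^*R_u$ with $R_u$ a fixed matrix (a square root of $A_u$) and using that $w_u+\overline{w_u}=0$, one wants $Q_u$ to satisfy $Q_u(w_u)=R_u$ while vanishing appropriately against the other $\xi_j$'s; this can be arranged by multiplying a suitable interpolating polynomial by the scalar factors $\prod_{j\ne u}\frac{(z-w_j)(z+\overline{w_j})}{(w_u-w_j)(w_u+\overline{w_j})}$, which is real on $i\mathbb R$ and equals $1$ at $w_u$, and hence is its own $\sharp$ up to the obvious conjugation bookkeeping. Then set
\[
\C(z)=\sum_{u=1}^M P_u^\sharp(z)P_u(z)+\sum_{u=M+1}^N Q_u^\sharp(z)Q_u(z).
\]
By construction $\C\in\mathcal{GPE}$ and $\C(w_i)\xi_i=\eta_i$ for all $i$ (with $\C(w_i)=\C(w_i)^*$ at the on-axis points), and since $\C$ is a polynomial in $\mathcal{GPE}$, Corollary~\ref{cycy} gives a polynomial factorization $\C(z)=L^\sharp(z)L(z)$, which is the desired $L$.

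The main obstacle is the bookkeeping at the on-axis points: one must ensure that the factor $Q_u$ can be chosen so that $Q_u^\sharp(w_u)Q_u(w_u)$ is exactly the prescribed positive matrix $A_u$ (not merely some positive matrix sending $\xi_u$ to $\eta_u$ — though by the Lemma that weaker requirement already suffices, which simplifies matters), while simultaneously killing the contributions at the other interpolation nodes without disturbing the positivity on $i\mathbb R$. The real-on-$i\mathbb R$ scalar Blaschke-type multipliers handle the vanishing conditions cleanly, and because we only need $Q_u^\sharp(w_u)Q_u(w_u)$ to be *some* positive matrix with the correct action on $\xi_u$, the Lemma lets us simply take $Q_u(w_u)=R_u$ where $R_u^*R_u=A_u=\tfrac{1}{\eta_u^*\xi_u}\eta_u\eta_u^*$ (or the projection complement when $\eta_u=0$); the rank-one structure makes an explicit $R_u$ available, so no genuine difficulty remains beyond careful indexing.
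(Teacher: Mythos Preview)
Your overall strategy coincides with the paper's: build an interpolating $\C\in\mathcal{GPE}$ as a sum of Lagrange-type terms $P_u^\sharp P_u$ (respectively $Q_u^\sharp Q_u$), then invoke Corollary~\ref{cycy} to obtain the polynomial factor $L$. The necessity argument and the treatment of on-axis nodes via the Lemma (pick $A_u\ge 0$ with $A_u\xi_u=\eta_u$, then arrange $Q_u^\sharp(w_u)Q_u(w_u)=A_u$) are exactly what the paper does; your explicit scalar multiplier $s_u(z)=\prod_{j\ne u}\frac{(z-w_j)(z+\overline{w_j})}{(w_u-w_j)(w_u+\overline{w_j})}$ satisfying $s_u^\sharp=s_u$ is a clean concrete implementation of the paper's $A_u^{1/2}$--sandwiching idea.

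There is, however, a genuine gap in your case split. You claim that for any off-axis $w_u$ the construction of Proposition~\ref{cro123321} applies verbatim because $w_u+\overline{w_u}\ne 0$; but that proposition requires $w_u+\overline{w_v}\ne 0$ for \emph{every} pair $u,v$, not merely for $v=u$. If the data contain an off-axis symmetric pair, i.e.\ $w_v=-\overline{w_u}$ with $u\ne v$, then your prescription $P_u(-\overline{w_u})=I_n$ forces $P_u(w_v)=I_n$, which is incompatible with $P_u(w_v)\xi_v=0$ since $\xi_v\ne 0$; the polynomial $P_u$ therefore cannot exist. The paper's proof is aware of this and explicitly separates out the indices $u$ for which some $v$ satisfies $w_u+\overline{w_v}=0$, treating them together with the on-axis case by a modified construction. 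You must either impose the hypothesis that no such symmetric pairs occur among the nodes, or handle the two nodes $w_u,w_v$ of a symmetric pair jointly (note that such a pair also imposes the hidden compatibility $\xi_v^*\eta_u=\eta_v^*\xi_u$, since any even $\C$ satisfies $\C(w_v)=\C(w_u)^*$).
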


\begin{proof}
For $u\in\left\{1,\ldots, N\right\}$ with no $w_v$ such that $w_u+\overline{w_v}=0$ we build $P_u$ as in Corollary \ref{cro123321}. Assume now $w_u\in i\mathbb R$ or $w_u,w_v$ with $u\not=v$ and
such that $w_u+\overline{w_v}=0$. We have in particular
\[
L(w_u)^*L(w_u)\xi_u=\eta_u
\]
and, if $u\not=v$,
\[
L(w_v)^*L(w_v)\xi_v=\eta_v
\]
and so the condition $\xi_u^*\eta_u\ge 0$ is indeed necessary.
Let $A_u$ be a positive matrix such that $A_u\xi_u=\eta_u$. We build a polynomial $P_u$ such that
\[
\begin{split}
P_u(w_j)\xi_j&=0,\quad\hspace{2mm} j\not=u,\\
P_u(-\overline{w_u})&=I_n,\\
P_u(w_u)&=A_u^{1/2},
\end{split}
\]
and possibly similarly for $w_v$.
Let
\[
\C(z)=\sum_{\substack{u=1\\ {\exists v\,\,{\rm s.t.}}\\ w_u+\overline{w_v}=0}}^N A_u^{1/2}P_u^\sharp(z)P_u(z)A_u^{1/2}+
\sum_{\substack{u=1\\ w_u+\overline{w_v}\not=0,\forall v}}^N P_u^\sharp(z)P_u(z).
\]
Then, $\C$ solves the interpolation conditions and can be expressed in the factorized form using Corollary \ref{cycy}.
\end{proof}

The above approach was developed in \cite{MR3034510}; another approach, using the fact that within the family of {\em even} functions the $\mathcal{GPE}$ polynomials form a convex cone, was developed in \cite{MR3223890}.
The idea is to build an interpolating polynomial, which takes hermitian, but not necessarily positive values, on the imaginary axis, and perturb it via a generalized positive function vanishing at the
interpolation points. The approach in \cite{MR3034510} is extended in the present paper to the quaternionic setting. Extending the second approach does not seem possible.
We illustrate the second method in the following example.
\begin{example}
Consider interpolation with $\mathcal{GPE}$ polynomials, with nodes and image points given by
\[
\begin{array}{c|c|c|c}
&1&1+i&1-i\\
  \hline
&1&2+8i&2-8i
\end{array}
\]
To find an {\em even} interpolating polynomial we need to add the
constraints,
\[
\begin{array}{c|c|c|c|c|c|c}
&1&1+i&1-i&-1&-1-i&-1+i\\
\hline
&1&2+8i&2-8i&1&2+8i&2-8i
\end{array}
\]
The respective Vandermonde equation is
\[
\left(\begin{smallmatrix}
1&&1   &&1  &&1    &&1 &&1\\~\\
1&&1+i &&2i &&-2+2i&&-4&&-4-4i\\~\\
1&&1-i &&-2i&&-2-2i&&-4&&-4+4i\\~\\
1&&-1  &&~1 &&-1   &&~1&&-1    \\~\\
1&&-1-i&& 2i&&2-2i &&-4&&4+4i\\~\\
1&&-1+i&&-2i&&2+2i &&-4&&4-4i
\end{smallmatrix}\right)
\left(\begin{smallmatrix}
a_o\\~\\a_1\\~\\a_2\\~\\a_3\\~\\a_5\\~\\a_6
\end{smallmatrix}\right)
\left(\begin{smallmatrix}
~~1\\~\\2+8i\\~\\2-8i\\~\\~~1\\~\\2+8i\\~\\2-8i
\end{smallmatrix}\right)
\quad\Longrightarrow\quad
\left(\begin{smallmatrix}
a_o\\~\\a_1\\~\\a_2\\~\\a_3\\~\\a_5\\~\\a_6
\end{smallmatrix}\right)
=
\left(\begin{smallmatrix}
-2\\~\\~~0\\~\\~~4\\~\\~~0\\~\\-1\\~\\~~0
\end{smallmatrix}\right)
\]
Thus, a minimal degree interpolating {\em even} polynomial is
\[
p_2(z)=-z^4+4z^2-2
\]
Next we exploit the fact that the set of {\em even} interpolating
polynomials forms a linear variety.
Take a minimal degree $\mathcal{GPE}$ polynomial vanishing at
the nodes,
\[
\Phi_o(z)=(4+z^4)(1-z^2)=-z^6+z^4-4z^2+4.
\]
Next, for a parameter $\beta\in\mathbb R$ define
\[
\Phi(z):=p_2(z)+\beta\Phi_o(z)=-{\beta}z^6+(\beta-1)z^4+4(1-\beta)z^2+4\beta-2.
\]
$\Phi(z)$ is an {\em even} interpolation polynomial for all $\beta\in\R$.\\

Note that on the imaginary axis $z=iy$, $y\in\R$,

\[
\begin{matrix}
{\Phi(z)}_{|_{z=iy}}&=&
{\beta}y^6+(\beta-1)y^4+4(\beta-1)y^2+4\beta-2
\\~&=&
y^2\left(\beta\left(y^2+\frac{\beta-1}{2\beta}\right)^2
  +(\beta-1)\frac{15\beta+1}{4\beta}\right)+4\beta-2.
\end{matrix}
\]

Thus, using the fact that the set of $\mathcal{GPE}$
functions forms a convex cone, one has that for
$\beta\geq 1$, this in a  $\mathcal{GPE}$ interpolating
polynomial. In particular, for $\beta=1$,
we have the solution, with corresponding left spectral factorization
\[
\Phi(z)=2-z^6=L_-(z)L_-^\sharp(z)
\]
with $L_-(z)=(\sqrt[6]{2}-z)(\sqrt[6]{2}e^{\frac{i\pi}{3}}-z)(\sqrt[6]{2}e^{-\frac{i\pi}{3}}-z)$.\\

Note that although formulated in scalar language,
the above interpolation can be casted in matricial framework.
\end{example}

\section{Generalized Carath\'eodory functions}
\setcounter{equation}{0}
\label{GCF}
Let $\C$ be a $\mathbb C^{n\times n}$-valued function, meromorphic in the open right half-plane $\mathbb C_r$. We call $\C$ a generalized Carath\'eodory
function if the kernel \eqref{KPHI} has a finite number, say $k$, of negative squares (see Definition \ref{co}. We denote by $\mathcal C_k^n$ the set of generalized Carath\'eodory
functions. When $n=1$ we denote $\mathcal C_k^1=\mathcal C_k$, and when $k=0$ we set $\mathcal C_0=\mathcal C$ the set of the so-called Carath\'eodory functions.
\begin{remark}
{\rm
We note that Akhiezer defines in \cite[p. 116]{akhiezer_russian} the Carath\'eodory class as to be the functions analytic in the open unit disk with a positive
real part there, and quotes in particular Herglotz \cite{herglotz} and Riesz \cite{zbMATH02604577} for the result on their integral representation.}
\end{remark}

There are at least two approaches to the theory of generalized Carath\'eodory functions; in the first one
the function $\C$ is extended to $\mathbb C\setminus i\mathbb R$ via
\begin{equation}
\C(z)+\C^\sharp(z)=0.
\label{sym123321}
\end{equation}
Such functions are called odd. We recall that $K_\C$ has the same number of negative squares in $\mathbb C_r$ and in $\mathbb C\setminus i\mathbb R$,
and (see e.g. \cite[(1.1)]{MR1902953}) that $\C$ admits a realization of the form
\begin{equation}
\C(z)=z_0\Gamma^*\Gamma-(z+\overline{z_0})\Gamma^*(I_{\mathcal H}+(z-z_0)(A-zI_{\mathcal H})^{-1})\Gamma,\quad z\in\mathbb C\setminus i\mathbb R,
\end{equation}
where $A$ is a skew-adjoint relation in a Hilbert space $\mathcal H$, $\Gamma\in\mathcal L(\mathbb C^n,\mathcal H)$, and $\Phi_0\in\mathbb C^{n\times n}$ is such
that
\[
\C_0+\C_0^*=(z_0+\overline{z_0})\Gamma^*\Gamma.
\]
Furthermore,
\begin{equation}
\frac{\C(z)+(\C(w))^*}{z+\overline{w}}=\Gamma_w^*\Gamma_z
\end{equation}
with
\[
\Gamma_z=\left(I_{\mathcal H}+(z-z_0)(A-zI_{\mathcal H})^{-1}\right)\Gamma.
\]
Note that, in general, the condition \eqref{sym123321} will prevent an element of $\mathcal C_k^n$ from being meromorphic in the complex plane. In fact,
such functions have an integral representation
generalizing the classical Herglotz representation, see \cite[(4.11) p. 215]{MR47:7504}. The function $\C$ will be meromorphic in the whole complex plane if and
only if an underlying measure appearing in the representation is a jump measure.\smallskip

Generalized Carath\'eodory functions can be characterized in terms of a factorization, which is the counterpart for for generalized Carath\'eodory functions of the Krein-Langer factorization for
generalized Schur functions, and was given in the works \cite{MR1771251,MR1736921,MR1902953,MR1971748},
both in the matrix and operator-valued cases. A constructive way to give the factorization in the scalar case was given later in \cite{a_lew_1}.
The result is:
\begin{theorem}
Let $\C$ be a $\mathbb C^{n\times n}$-valued function analytic in the open right half-plane, extended to the open left-half plane by $\C(z)+\C^\sharp(z)=0$.
Then  the following are equivalent:\\
$(1)$ The kernel $K_\C(z,w)$ has a finite number of negative squares in $\mathbb C\setminus i\mathbb R$.\\
$(2)$ There exist a $\mathbb C^{n\times n}$-valued rational function $V$ and a Carath\'eodory function $\C_0$ such that
\begin{equation}
\label{place_de_la_nation}
\C(z)=V^\sharp(z)\C_0(z)V(z)
\end{equation}
with $V^\sharp(z)$ defined by \eqref{notthesharpest}, i.e. $V^\sharp(z)=V(-\overline{z})^*$.
\label{gare_de_lyon}
\end{theorem}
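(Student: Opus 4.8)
The plan is to prove the two implications separately. The implication $(2)\Rightarrow(1)$ is a direct kernel computation, whereas $(1)\Rightarrow(2)$ is the substantive half: it is the half-plane, matrix-valued analogue of the Krein--Langer factorization, and I would deduce it from the Krein--Langer factorization for generalized Schur functions recalled in the proof of Corollary \ref{cy789}, applying the matrix Cayley transform to the \emph{values} of $\C$ so as to stay inside $\mathbb C_r$ throughout.

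For $(2)\Rightarrow(1)$ I would substitute $\C=V^\sharp\C_0V$ into the kernel \eqref{KPHI}. Since $\C_0$ is a Carath\'eodory function one has $K_{\C_0}(z,w)=\frac{\C_0(z)+\C_0(w)^*}{z+\overline{w}}\ge 0$, and, writing $V^\sharp(z)=V(-\overline{z})^*$ as in \eqref{notthesharpest} and adding and subtracting the term $V(-\overline{z})^*\bigl(\C_0(z)+\C_0(w)^*\bigr)V(-\overline{w})$ in the numerator of $K_\C$, one obtains
\[
K_\C(z,w)=V(-\overline{z})^*\,K_{\C_0}(z,w)\,V(-\overline{w})+R(z,w),
\]
where $R$ is a Hermitian kernel assembled out of the divided differences $\frac{V(z)-V(-\overline{w})}{z+\overline{w}}$ of the rational function $V$. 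Since $V$ is rational, those divided differences are kernels of finite rank, controlled by the McMillan degree of $V$, so $R$ has finite rank; as the first summand is positive, $K_\C$ has finitely many negative squares in $\mathbb C_r$, and, this number being the same in $\mathbb C_r$ as in $\mathbb C\setminus i\mathbb R$, $(1)$ follows.

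For $(1)\Rightarrow(2)$ I would put $S(z)=(I_n-\C(z))(I_n+\C(z))^{-1}$, which is well defined and meromorphic off a discrete subset of $\mathbb C_r$ (note $I_n+\C$ is invertible on $i\mathbb R$, since $\Re\,\C(iy)\ge 0$ there). A short manipulation gives
\[
I_n-S(z)S(w)^*=2\,(I_n+\C(z))^{-1}\bigl(\C(z)+\C(w)^*\bigr)(I_n+\C(w)^*)^{-1},
\]
hence $K_S$ of \eqref{groningen1987} equals $2(I_n+\C(z))^{-1}K_\C(z,w)(I_n+\C(w)^*)^{-1}$; conjugation by the invertible rational factor $(I_n+\C)^{-1}$ preserves the number of negative squares, so $S$ is a generalized Schur function, again with $\kappa$ negative squares. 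By the Krein--Langer factorization recalled above (see \cite{kl1,MR2002664}), $S=B_0^{-1}S_0$ with $B_0$ a finite Blaschke product and $S_0$ analytic and contractive in $\mathbb C_r$. Inverting the Cayley transform, $\C=(I_n-S)(I_n+S)^{-1}$; substituting $S=B_0^{-1}S_0$ and regrouping the Blaschke factor $B_0$ (which is $\sharp$-unitary on $i\mathbb R$, so that $B_0^\sharp B_0\equiv I_n$ and $B_0^{-1}=B_0^\sharp$) against the contractive factor $S_0$, I would reach the asserted form $\C=V^\sharp\C_0V$, with $\C_0$ the classical Carath\'eodory function obtained by Cayley-transforming $S_0$ back and $V$ a rational matrix function determined by $B_0$.

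The step I expect to be the main obstacle is this last regrouping: one must show that undoing the Cayley transform on $B_0^{-1}S_0$ yields the \emph{balanced} product $V^\sharp(\cdot)\,\C_0(\cdot)\,V(\cdot)$ rather than a one-sided object (in particular, the correct $V$ need not be $B_0$ itself), which requires careful bookkeeping with adjoints and with the symmetry $z\mapsto-\overline{z}$ in \eqref{notthesharpest}; one must also check that the number of negative squares is preserved \emph{exactly}, not merely bounded, under all the transformations used. An alternative, which avoids the unit disk altogether, is to work directly with the reproducing kernel Pontryagin space attached to $K_\C$, isolate its finite-dimensional negative part, and peel it off as the rational factor $V$. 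Either way, the result is precisely that of \cite{MR1771251,MR1736921,MR1902953,MR1971748} (with a constructive treatment of the scalar case in \cite{a_lew_1}), and for the purposes of this paper it suffices to invoke those references.
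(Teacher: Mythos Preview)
Your treatment of $(2)\Rightarrow(1)$ is essentially the paper's: the paper displays the same kernel decomposition (writing the cross terms via the divided differences of $V$) and concludes that $K_\C$ has at most ${\rm deg}\,V$ negative squares.

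For $(1)\Rightarrow(2)$ the paper gives no self-contained argument either; it simply records that the factorization is the one obtained in \cite{MR1771251,MR1736921,MR1902953,MR1971748}, and briefly describes the method of \cite{MR1902953}: an \emph{iterative} extraction, directly in the Carath\'eodory/Nevanlinna class, of pairs consisting of a generalized pole of non-positive type and a generalized zero of non-positive type (the ``non-orthogonal'' case first, then the general case via a pole displacement). This is the alternative you mention at the end --- working inside the Pontryagin space of $K_\C$ and peeling off the finite-dimensional negative part --- and it is the route the cited papers actually take.

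Your primary route via Cayley transform is genuinely different, and the obstacle you flag is not mere bookkeeping but a structural mismatch. The Krein--Langer factorization $S=B_0^{-1}S_0$ is a \emph{one-sided} left quotient; feeding it back through $(I_n-S)(I_n+S)^{-1}$ gives $(B_0-S_0)(B_0+S_0)^{-1}$, and there is no algebraic identity that rewrites this as a balanced product $V^\sharp\C_0 V$ with $\C_0$ a Carath\'eodory function. The relation $B_0^{-1}=B_0^\sharp$ does not help here, because the Cayley transform is not multiplicative. Indeed, the passage from the one-sided Schur-class factorization to the two-sided Carath\'eodory-class factorization \eqref{place_de_la_nation} is precisely the content of the cited works, and in \cite{MR1902953} it is obtained not through Cayley transform but by the iterative pole/zero extraction above. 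So your Cayley argument, as written, does not close; the correct endpoint is the one you reach anyway, namely invoking \cite{MR1771251,MR1736921,MR1902953,MR1971748}.
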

In Theorem \ref{gare_de_lyon} the function $V$ collects the generalized poles and zeros of $\C$ of negative type; we will recall the definitions in the sequel.
When the kernel is positive definite, $A$ reduces to a constant matrix, assumed to be the identity. The factorization was obtained in \cite{MR1902953} in an iterative way,
by first extracting a factor on the right and on the left corresponding to a pair of a generalized
pole of non-positive type and a generalized zero of non-positive type, when the corresponding
eigenvectors are not orthogonal (the so-called non-orthogonal case). The
general case is then treated in that paper by a pole displacement.\smallskip

\begin{remark}
{\rm When $\C_0(z)\equiv I_n$ in the above (i.e. not extended to $\mathbb C_\ell$ as to satisfy  \eqref{sym123321}), and when local minimality of the factorization is requested at the purely
imaginary points and if $V$ has no poles or zeros in the open left half plane, \eqref{place_de_la_nation} is then a right spectral factorization; see Definition
  \ref{dassin}.}
\end{remark}

The decomposition

\[
\begin{split}
  \frac{V^\sharp(z)\C_0(z)V(z)+V(w)^*\C_0(w)^*(V^{\sharp}(w))^*}{z+\overline{w}}&=V^{\sharp}(z)\frac{\C_0(z)+\C_0(w)^*}{z+\overline{w}}(V^{\sharp}(w))^*+\\
&\hspace{-45mm}
+\begin{pmatrix} I_n&V^\sharp(z)\C_0(z)\end{pmatrix}\begin{pmatrix}0&\frac{V(z)-(V^\sharp(w))^*}{z+\overline{w}}\\
\frac{V(w)^*-(V^\sharp(z))}{z+\overline{w}}&0\end{pmatrix}
\begin{pmatrix} I_n&V^\sharp(w)\C_0(w)\end{pmatrix}^*
\end{split}
\]

shows that the kernel $K_\C(z,w)$ associated with \eqref{place_de_la_nation} has at most ${\rm deg}\,V$ negative squares.\\
\begin{example}
We here illustrate \eqref{place_de_la_nation} through examples.

$(a)$ Consider the function $\Phi=\frac{z^3(1-z)}{(z-i)^2}$ $\C(z)=\frac{z^3(1-z)}{(z-i)^2}$ (see \cite[Example p. 520]{a_lew_1}).
It has a positive real part on the imaginary axis (besides at $z=i$ where it has a pole) and we have
\[
\C(z)=V^\sharp(z)\C_0(z)V(z)
\]
where $\C_0(z)=\frac{z}{z+1}$ is a positive function and $V(z)=\frac{z(z+1)}{z-i}$.

$(b)$ As a two-dimensional example, let us take
\[
\C_0(z)=\begin{pmatrix}z&0\\0&1\end{pmatrix}\quad {\rm and}\quad V(z)=\begin{pmatrix}1&z\\0&1\end{pmatrix}.
\]
Then,
\[
\C(z)=V^\sharp(z)\C_0(z)V(z)=\begin{pmatrix}1&0\\-z&1\end{pmatrix}\begin{pmatrix}z&0\\0&1\end{pmatrix} \begin{pmatrix}1&z\\0&1\end{pmatrix}=\begin{pmatrix}z&z^2\\-z^2&1-z^3\end{pmatrix}
\]
and ${\rm Re}\, \C(ix)=\begin{pmatrix}0&0\\0&1\end{pmatrix}$. Furthermore, for $z,w\in\mathbb C_r$,
\[
\begin{split}
  K_\C(z,w)&=\frac{1}{z+\overline{w}}\begin{pmatrix}z+\overline{w}&z^2-\overline{w}^2\\ \overline{w}^2-z^2&2-z^3-\overline{w}^3\end{pmatrix}\\
  &=\begin{pmatrix}1&z-\overline{w}\\ \overline{w}-z&\frac{2}{z+\overline{w}}-(z^2+\overline{w}^2+z\overline{w})\end{pmatrix}\\
  &=\begin{pmatrix}1&0\\ 0&\frac{2}{z+\overline{w}}\end{pmatrix}+\begin{pmatrix}0&z-\overline{w}\\ \overline{w}-z&-(z^2+\overline{w}^2+z\overline{w})\end{pmatrix}.
  \end{split}
\]
This kernel has one negative square (we will not prove this here).

$(c)$ With $V$ from item $(b)$, consider the $\mathcal{GPE}$ function $V^\sharp V$. Indeed,
\[
V^\sharp(z)V(z)=\begin{pmatrix}1&z\\-z&1-z^2\end{pmatrix}
\]
which for $z=ix$ takes the values
\[
\begin{pmatrix}1&ix\\-ix&1+x^2\end{pmatrix},\quad\mbox{\rm equal to the real part of } V^\sharp(z)V(z).
\]
Finally, for $z,w\in\mathbb C_r$,
\[
\begin{split}
\frac{\begin{pmatrix}1&z\\-z&1-z^2\end{pmatrix}+\begin{pmatrix}1&w\\-w&1-w^2\end{pmatrix}^*}{z+\overline{w}}&=\frac{\begin{pmatrix}2&z-\overline{w}\\ \overline{w}-z&2-z^2-\overline{w}^2\end{pmatrix}}
{z+\overline{w}}\\
&\hspace{-3cm}=\underbrace{2\frac{\begin{pmatrix}1\\ -z\end{pmatrix}\begin{pmatrix}1\\ -w\end{pmatrix}^*
  +\begin{pmatrix}0\\1\end{pmatrix}
\begin{pmatrix}0\\1\end{pmatrix}^*}{z+\overline{w}}}_{\mbox{$K_1(z,w)$}}+\underbrace{\begin{pmatrix}0&1\\1&-(z+\overline{w})\end{pmatrix}}_{\mbox{$K_2(z,w)$}}
\end{split}
\]
which expresses the kernel $K_{V^\sharp V}(z,w)$ as a sum of a positive kernel $K_1(z,w)$ and a kernel $K_2(z,w)$ which has one negative square. The reproducing kernel associated to $K_1(z,w)$
consists of the functions of the form
\[
\begin{pmatrix}1\\ -z\end{pmatrix}h_1(z)+\begin{pmatrix}0\\ 1\end{pmatrix}h_2(z),
\]
where $h_1$a nd $h_2$ run through the Hardy space $\mathbf H_2(\mathbb C_r)$, while the reproducing kernel Pontryagin space associated to $K_2(z,w)$ is spanned by the functions
\[
\begin{pmatrix}1\\ z\end{pmatrix}\quad {\rm and}\quad \begin{pmatrix}0\\ 1\end{pmatrix}.
\]
These spaces have a trivial intersection. Indeed, assume that there exist complex numbers $a,b$ and functions $h_1,h_2\in\mathbf H_2(\mathbb C_r)$ such that
\[
\begin{pmatrix}1\\ -z\end{pmatrix}h_1(z)+\begin{pmatrix}0\\ 1\end{pmatrix}h_2(z)=a\begin{pmatrix}1\\ z\end{pmatrix}+b \begin{pmatrix}0\\ 1\end{pmatrix}.
\]
Then $a=h_1(z)$ and so $a=h_1(z)=0$ and then $b=h_2(z)$ and so $b=h_2(z)=0$. It follows
\[
\mathcal H(K_{V^\sharp V})=\mathcal H(K_1)[+]\mathcal H(K_2),
\]
where $[+]$ denotes a direct and orthogonal sum, and so the kernel $K_{V^\sharp V}$ has exactly one negative square.
\end{example}

In the second approach to the study of generalized Carath\'eodory functions, $\C$ is assumed to be rational, and so in general will not meet \eqref{sym123321}.
The function is then a generalized Carath\'eodory function if and only if it belongs to ${\mathcal GP}$.
This is a known, but non trivial fact; see Corollary \ref{cy789}.\smallskip

The two approaches intersect in a very special class, namely odd rational functions. These were studied using realization theory in particular in \cite{ag}.\\

It is useful to compare the two approaches in a very simple and important example, related to the matrix sign function; see \cite{MR1343800} for the latter. We take $\Phi(z)=\frac{1}{2}$
for ${\rm Re}\, z>0$, and we denote by $\mathbb C_\ell$ the open left half-plane. We have in the first approach
\[
\C(z)=-\frac{1}{2},\quad z\in\mathbb C_\ell,
\]
and the kernel $K_C(z,w)$ is equal to
\[
  K_C(z,w)=\begin{cases}\,\frac{1}{z+\overline{w}},\quad z,w\in\mathbb C_r,\\
    \,\, 0\,\,\,\hspace{8mm} z,w \,\,\mbox{\rm in different half-planes,}\\
    \frac{-1}{z+\overline{w}},\quad z,w\in\mathbb C_\ell,
    \end{cases}
\]
and the associated reproducing kernel Hilbert space consists of functions equal to a function of the Hardy space $\mathbf H_2(\mathbb C_r)$ in $\mathbb C_r$,
and equal to a function of the Hardy space $\mathbf H_2(\mathbb C_\ell)$ in $\mathbb C_\ell$. On the other hand, in the second approach, $K_C$ is defined only in
$\mathbb C_r$, and the associated reproducing kernel Hilbert space consists of functions equal to a function of the Hardy space
$\mathbf H_2(\mathbb C_r)$ in $\mathbb C_r$.\\

In summary:\mbox{}\vspace{1cm}

\shabox{
{\rm
Start from a $\mathbb{C}^{n\times n}$-valued rational function $\Phi$ in ${\rm Re}(z)\geq 0$ and analytic
at infinity.\\
{\bf Case 1:} Extend analytically $\C$ to the whole plane. The positive real lemma characterizes the condition ${\rm Re}\,\C(iy)\ge0$. Then, $K_\C(z,w)$ has a
finite number of negative squares in the whole complex plane, from which poles of $\C$ are removed. But $\C$ need not satisfy \eqref{sym123321}.\\
{\bf Case 2:} Consider $\C$ defined in the open set ${\rm Re}\, z>0$ and extend it to the left open half-plane by \eqref{sym123321}.
The kernel $K_\C(z,w)$ has a finite number of negative squares in $\mathbb C_r$ if and only if it can be written as \eqref{place_de_la_nation}.\\
{\bf Case 3:} Intersection of the two cases, i.e. rational functions satisfying \eqref{sym123321}. This case is studied in \cite{ag} (but no factorization of the kind
\eqref{sym123321} is given there).}
}
\mbox{}\vspace{1cm}

In the quaternionic setting it need not be true that a function $\C$ such that the (counterpart of the) kernel $K_\C$ has a finite number of negative squares, will
have a positive real part on the set of quaternions with real positive part. See Example \ref{567764}.\\

\section{Quaternionic setting: Preliminaries}
\setcounter{equation}{0}
\label{sec4}
In this section we recall some basic notions and results in the quaternionic setting. The skew field of quaternions contains elements of the form $q=x_0+x_1i+x_2j+x_3k$ where $i$, $j$, $k$ satisfy $i^2=j^2=-1$, $ij=-ji=k$, $jk=-kj=i$, $ki=-ik=j$. Given a quaternion $q$ its conjugate is $\bar q=x_0-x_1i-x_2j-x_3k$ and $q\bar q=\bar q q=|q|^2$ where $|q$ denotes the Euclidean norm of $q$.
\\
The set
\[
\mathbb S = \{p = x_1 i + x_2j + x_3k\ {\rm such\ that}\ x_1^2+ x_2^2+ x_3^2 = 1\}
\]
contains purely imaginary quaternions with norm $1$. It is a 2-dimensional sphere in $\mathbb H$
identified with $\mathbb R^4$. We note that an element $I\in\mathbb  S$ satisfies $I^2 = -1$ and thus it behaves like an imaginary unit.
\\
Given any nonreal quaternion $p$, we can write $p=x+Iy$ where $x=x_0$  and $y= x_1 i + x_2j + x_3k/| x_1 i + x_2j + x_3k|$, thus $p\in\mathbb C_I$. We can also define the set
\[
[p]=\{x+Jy\ | \ J\in\mathbb{S}\}
\]
which is a 2-dimensional sphere in $\mathbb{R}^4$ identified with $\mathbb H$. The sphere $[p]$ associated with $p$ can be also seen as the equivalence class of the elements equivalent to $p$ where $q$ is equivalent to $p$ if and only if $q=r^{-1}pr$ for a suitable $r\not=0$.
 If $p=x\in\mathbb R$, $p\in\mathbb{C}_I$ for any $I\in\mathbb S$. It is immediate that the sphere $[x]$ contains only $x$.\\

Let us write a quaternion $q$ in the form $q=(x_0+x_1i)+(x_2+x_3i)j=z_1+z_2j$, where $z_1$, $z_2$ belong to the complex plane $\mathbb C_i$ (we write $\mathbb C$ for simplicity) associated with the imaginary unit $i$. We can define the map
$\chi_i:\ \mathbb H\to \mathbb C^{2\times 2}$ by
\begin{equation}\label{chi}
\chi_i(q)=\begin{pmatrix} z_1&z_2\\
-\overline{z_2}&\overline{z_1}\end{pmatrix}.
\end{equation}
depends on the choice of the imaginary unit $i$, however in the sequel we do not emphasize this dependence and we omit to specify thus writing, for simplicity, $\chi$. The map $\chi$ is extended to matrices in the following
way: If $A=A_1+A_2j$ we set
\begin{equation}
\chi(A)=\begin{pmatrix}A_1&A_2\\ -\overline{A_2}&\overline{A_1}\end{pmatrix}
\end{equation}
and we have
\begin{equation}
\label{Isym}
A\in{\rm ran}\,\chi\,\,\, \iff\,\,\, A=E_n^{-1}\overline{A}E_n
\end{equation}
with
\begin{equation}
\label{EEE111}
E_n=\begin{pmatrix}0&I_n\\-I_n&0\end{pmatrix}.
\end{equation}

We note that symmetries of the type \eqref{Isym} were considered in \cite{MR3223890}, in the setting of polynomial interpolation.

As it is well known, there are various ways of extending to the quaternionic setup the notion of (matrix valued) holomorphic function. In this paper we use the so-called (left) slice hyperholomorphic functions. We will not repeat the basic information, and we refer the reader to \cite{zbMATH06658818} for more details. To our purposes, it is enough to recall the definition (given in a general setting although we use it for the matrix-valued case) and a characterization:
\begin{definition}
Let $\Omega\subseteq \mathbb H$ be an axially symmetric set and let $\mathcal{X}$ be a two sided quaternionic Banach space. A function
$f:\ \Omega\to\mathcal{X}$ of the form
$f(p)=f(x+{I}y)=\alpha (x,y) +{I}\beta (x,y)$ where $\alpha, \beta:
\Omega\to \mathcal{X}$ depend only on $x,y$, are real differentiable, satisfy the Cauchy-Riemann
equations
\begin{equation}\label{CR1}
\begin{cases}
\partial_x \alpha -\partial_y\beta=0\\
\partial_y \alpha
+\partial_x\beta=0,
\end{cases}
\end{equation}
and
\begin{equation}\label{alfabeta1}
\begin{split}
\alpha(x,-y)=\alpha(x,y),\qquad
\beta(x,-y)=-\beta(x,y)
\end{split}
\end{equation}
is said to be (left) slice hyperholomorphic.
\end{definition}
Being in a noncommutative setting, one can also give the definition of right slice hyperholomorphic functions. The definition is as above, but for functions of the form $f(p)=f(x+{I}y)=\alpha (x,y) +\beta (x,y)I$.\\
The next proposition contains a characterization of functions slice hyperholomorphic in a neighborhood of a real point:
\begin{proposition}\label{slicef}
An $\mathbb{H}^{n\times n}$-valued function $f$ is (left) slice hyperholomorphic in a ball $B\subseteq \mathbb{H}$ centered at $x_0\in\mathbb R$ if and only if $f$ is of the form
\[
f(p)=\sum_{m}(p-x_0)^mA_m, \qquad A_m\in \mathbb{H}^{n\times n}, \qquad p\in B.
\]
\end{proposition}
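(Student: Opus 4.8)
The plan is to prove Proposition \ref{slicef} by directly analyzing the representation $f(p)=f(x+Iy)=\alpha(x,y)+I\beta(x,y)$ near the real point $x_0$ and showing that slice hyperholomorphy forces $\beta$ to be divisible by $y$, so that the dependence on $I$ disappears and one is left with a genuine power series in the single (commuting) variable $p-x_0$. First I would reduce to $x_0=0$ by translation, which preserves axial symmetry and the Cauchy--Riemann system \eqref{CR1}. Then I would exploit the evenness/oddness conditions \eqref{alfabeta1}: since $\beta(x,\cdot)$ is odd and real-analytic (this follows from the Cauchy--Riemann equations, as $\alpha$ and $\beta$ are then real-analytic), we may write $\beta(x,y)=y\gamma(x,y^2)$ with $\gamma$ smooth, and similarly $\alpha(x,y)=\delta(x,y^2)$ for smooth $\delta$. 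Substituting $I y = p-x$ and absorbing $y^2 = -|p-x|^2$-type terms, the key point is that on the slice $\mathbb{C}_I$ the quantity $\alpha(x,y)+I\beta(x,y)$ becomes an ordinary $\mathbb C_I$-valued holomorphic function of the complex variable $x+Iy$, and holomorphy together with the symmetry conditions means the Taylor coefficients at $0$ are independent of $I$.

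Concretely, the cleanest route is: fix an imaginary unit $I$ and restrict $f$ to the slice $\mathbb C_I$. The conditions \eqref{CR1} say exactly that $f|_{\mathbb C_I}$ is holomorphic as a function of $z=x+Iy$ valued in $\mathbb C_I \otimes \mathbb H^{n\times n}$, hence on a disk around $0$ it has an absolutely convergent Taylor expansion $f(x+Iy)=\sum_m (x+Iy)^m C_m^{(I)}$ with $C_m^{(I)}\in \mathbb H^{n\times n}$ (one splits $f$ into scalar components and applies the classical result). I would then use the symmetry \eqref{alfabeta1}, i.e. $f(x-Iy)=\overline{\alpha}$-type reflection, more precisely that replacing $I$ by $-I$ sends $\alpha(x,y)+I\beta(x,y)$ to $\alpha(x,y)-I\beta(x,y)=\alpha(x,-y)+I\beta(x,-y)=f(x+I(-y))$, to conclude that the expansion in powers of $x-Iy=\overline{x+Iy}$ obtained from the $-I$ slice has the same coefficients: equating $\sum_m (x+Iy)^m C_m^{(I)}$ and the analogous series, and using that $\{(x+Iy)^m\}$ and $\{(x-Iy)^m\}$ together are linearly independent in the relevant sense, one forces $C_m^{(I)}$ to be independent of $I$, call it $A_m$. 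Finally, the representation formula for slice functions (or a direct argument: two slice hyperholomorphic functions agreeing on one slice $\mathbb C_I$ through a real neighborhood agree everywhere on the axially symmetric domain) shows $f(p)=\sum_m (p-x_0)^m A_m$ on all of $B$, since the right-hand side is slice hyperholomorphic and agrees with $f$ on $\mathbb C_I\cap B$.

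For the converse direction, I would simply check that $p\mapsto\sum_m(p-x_0)^mA_m$, written with $p=x+Iy$, has the form $\alpha(x,y)+I\beta(x,y)$ with $\alpha,\beta$ depending only on $x,y$ (expanding $(x+Iy-x_0)^m$ by the binomial theorem, valid since $x-x_0$ is real and commutes with $Iy$, and collecting real and $I$-parts), verify the Cauchy--Riemann system \eqref{CR1} termwise (it holds for each monomial $(x+Iy-x_0)^m$ since these are "holomorphic" in $x+Iy$), and verify the parity conditions \eqref{alfabeta1} (the $I$-part is odd in $y$ because it collects the odd powers of $Iy$). Convergence of the term-by-term manipulations is justified on the ball $B$ by absolute convergence of the power series there.

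The main obstacle, and the step that needs the most care, is the passage from "holomorphic on a single slice with some coefficients $C_m^{(I)}$" to "coefficients independent of $I$": one must argue cleanly why the symmetry \eqref{alfabeta1} pins down all coefficients and why the resulting real-point power series extends to a slice hyperholomorphic function on the whole axially symmetric ball $B$ agreeing with $f$. This is precisely where one invokes the identity principle / representation formula for slice hyperholomorphic functions (available in \cite{zbMATH06658818}), so the proof is really: holomorphy on one slice $+$ reflection symmetry $+$ identity principle for slice functions. I would present it in that order, keeping the bulk of the verification for the converse direction as routine termwise computations.
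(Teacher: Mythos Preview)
The paper does not actually prove Proposition~\ref{slicef}; it is stated there as a known characterization, with the background reference being \cite{zbMATH06658818}. So there is no ``paper's own proof'' to compare against, and your outline is in effect a reconstruction of the standard argument.

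Your sketch is essentially sound: restrict to a slice $\mathbb C_I$, use the Cauchy--Riemann system to get holomorphy and hence a Taylor expansion in $z=x+Iy$, then extend. The converse direction (termwise verification that a convergent power series in $p-x_0$ with right coefficients is slice hyperholomorphic) is routine and you describe it correctly. One point deserves tightening: your argument that the coefficients $C_m^{(I)}$ are independent of $I$ via the reflection symmetry \eqref{alfabeta1} is more involved than necessary and, as written, only yields $C_m^{(-I)}=C_m^{(I)}$ rather than full independence of $I\in\mathbb S$. The clean way is to observe that the ball $B$ meets the real axis in an interval containing $x_0$, and for real $x$ one has $f(x)=\alpha(x,0)$ (since $\beta(x,0)=0$ by oddness), so $C_m^{(I)}=\frac{1}{m!}\frac{d^m}{dx^m}f(x)\big|_{x=x_0}$ is computed purely on the reals and is manifestly slice-independent. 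Alternatively, and closer to what you say at the end, you can skip this step entirely: expand on a single slice to get coefficients $A_m$, note that $\sum_m(p-x_0)^mA_m$ is slice hyperholomorphic (by your converse computation), and invoke the identity principle from \cite{zbMATH06658818} since the two functions agree on $B\cap\mathbb C_I$. Either way the proof goes through; just streamline the independence-of-$I$ step.
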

It is immediate that $B$ may coincide with $\mathbb H$ and that polynomials with matrix coefficients written on the right are a particular case of slice hyperholomorphic functions on $\mathbb H$. For right slice hyperholomorphic functions, the series are written with coefficients on the left.
\\
Another consequence of Proposition \ref{slicef} is that pointwise multiplication of two slice hyperholomorphic functions does not belong, in general, to this class of functions. In order to have an inner operation one needs to define a suitable notion of multiplication and in the case of functions slice hyperholomorphic at the origin the operation is described in the following lemma:
\begin{lemma}
\label{jeanette}
Let $F$ and $G$ be $\mathbb H^{n\times n}$-valued functions slice hyperholomorphic in a neighborhood $V$ of the origin, ,and let $p_0\in V$. Let
$F(p)=\sum_{k=0}^\infty p^kF_k$, with $F_k\in\mathbb H^{n\times n}$. We have
\[
(G\star F)(p)=\sum_{k=0}^\infty p^kG(p)F_k
\]
If $G(p_0)=0$ we have
\[
(G\star F)(p_0)=0.
\]
If $G(p_0)=I_n$,
\begin{equation}
\label{zebda}
(G\star F)(p_0)=\sum_{k=0}^\infty p_0^kF_k=F(p_0) .
\end{equation}
\end{lemma}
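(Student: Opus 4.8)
The plan is to reduce everything to the definition of the $\star$-product in terms of the coefficient sequences, followed by a rearrangement of an absolutely convergent double series. Write $F(p)=\sum_{k\ge 0}p^kF_k$ and $G(p)=\sum_{m\ge 0}p^mG_m$; by the definition of the (left) $\star$-product for functions slice hyperholomorphic at the origin one has
\[
(G\star F)(p)=\sum_{n\ge 0}p^n\Big(\sum_{k=0}^n G_kF_{n-k}\Big).
\]
By Proposition \ref{slicef} these power series represent the corresponding functions on a ball centered at the origin, and on any strictly smaller ball they converge absolutely and uniformly. Restricting if necessary to a ball $B\subseteq V$ of absolute convergence of the series of $F$ and of $G$, and assuming — as the hypothesis that $F$ is presented by its series implicitly requires — that $p_0\in B$, all the manipulations below are legitimate.

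The first step is to prove the displayed identity $(G\star F)(p)=\sum_{k\ge 0}p^kG(p)F_k$. Substituting the expansion of $G$ into its right-hand side and using that powers of the single quaternion $p$ commute, $p^kp^m=p^{k+m}$, gives
\[
\sum_{k\ge 0}p^kG(p)F_k=\sum_{k\ge 0}\sum_{m\ge 0}p^{k+m}G_mF_k .
\]
Since $\sum_{k,m\ge 0}|p|^{k+m}\,\Vert G_m\Vert\,\Vert F_k\Vert=\big(\sum_m|p|^m\Vert G_m\Vert\big)\big(\sum_k|p|^k\Vert F_k\Vert\big)<\infty$ for $p\in B$, this double series converges absolutely and may be summed along the anti-diagonals $n=k+m$: collecting the finitely many terms with $k+m=n$ and relabeling the inner index yields $\sum_{n\ge 0}p^n\big(\sum_{m=0}^nG_mF_{n-m}\big)$, which is precisely $(G\star F)(p)$. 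This proves the formula, in particular at $p=p_0$.

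The two remaining assertions are then immediate specializations of this formula evaluated at $p_0$. If $G(p_0)=0$ every summand $p_0^kG(p_0)F_k$ vanishes, so $(G\star F)(p_0)=0$. If $G(p_0)=I_n$ then $p_0^kG(p_0)F_k=p_0^kF_k$, whence $(G\star F)(p_0)=\sum_{k\ge 0}p_0^kF_k=F(p_0)$, which is \eqref{zebda}. The only point that needs genuine care is the rearrangement of the double series into anti-diagonals, which is exactly why one works inside a ball of absolute convergence rather than an arbitrary axially symmetric neighborhood of the origin; non-commutativity causes no extra trouble here, since matrix multiplication is associative and the order $G_mF_k$ of the factors is preserved throughout the computation.
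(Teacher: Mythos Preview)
The paper states this lemma without proof, treating it as a routine consequence of the definition of the $\star$-product for power series (see the discussion around Proposition~\ref{slicef}). Your argument is a correct and complete verification: you expand both series, use that powers of the single quaternion $p$ commute with one another, rearrange the absolutely convergent double sum along anti-diagonals, and recover the Cauchy-product definition of $G\star F$; the two corollaries for $G(p_0)=0$ and $G(p_0)=I_n$ then follow by direct substitution. This is exactly the computation one expects, and your remark that the rearrangement requires working inside a common ball of absolute convergence is the right justification.
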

One peculiar behavior of the $\star$-multiplication is that if $p=p_0$ is a zero of $F(p)$, in general it is not a zero of $G\star F$. This fact has serious consequences for example in interpolation theory.
\\
It is also possible to define the $\star$-inverse of a slice hyperholomorphic function, at the points where it
is  nonzero. For the goals of this paper, it is enough to recall the formula
\[
(p+\overline q)^{-\star}=(|q|^2+2{\rm Re}(q)p+p^2)^{-1}(p+q).
\]

In \cite[p. 1767]{acs-survey}, we considered an example in which the operator of $\star$-multiplication was an isometry on the Hardy space but not contractive. We next consider a similar example, adapted to this framework.

\begin{example}
\label{567764}
Let
\[
\C(p)=\begin{pmatrix}1&i\\ j&ij\end{pmatrix}
\star\begin{pmatrix}p&0\\ 0&1\end{pmatrix}\star \begin{pmatrix}1&i\\ j&ij\end{pmatrix}^*=\begin{pmatrix}p+1&-pj+j\\
  pj-j&p+1\end{pmatrix}.
\]
Then, $\C\in \mathcal C_0^2$ but it does not hold that
\[
{\rm Re}\, \C(p)\ge 0\quad {\rm for}\quad p+\overline{p}=0.
\]
\end{example}
Indeed,
\[
(\C(p)+\C(q)^*)\star(p+\overline{q})^{-\star}=Z
Z^*\ge 0,{\rm with}\quad Z=\begin{pmatrix}1&i\\ j&ij\end{pmatrix},
\]
while, for $p+\overline{p}=0$,
\[
\begin{split}
2{\rm Re}\, \C(p)&=2{\rm Re}\,\begin{pmatrix}p+1&-pj+j\\
  pj-j&p+1\end{pmatrix}\\
&=\begin{pmatrix}2&-pj-j\overline{p}+2j\\pj+j\overline{p}-2j&2
\end{pmatrix}
\end{split}
\]
For $p=tk$ with real $t$ and $k\in\mathbb S$ such that $jk=-kj$, we have
\[
pj+j\overline{p}=2t kj
\]
and the matrix ${\rm Re}\, \C(p)$ is not positive for large enough real $t$.\\

We now present, in the setting of quaternions and in a weaker version, a result from \cite[Proposition 3.3]{bolotnikov2019lagrange}. We adapt the result in the sense that we do not require the mixing
(or compatibility) conditions
ahead of time, but rather require that mixing conditions can be met; this is the solvability of the equations \eqref{tototo123321}. In the present notation, in \cite{bolotnikov2019lagrange},
the matrices $X_{jk}$ are fixed ahead, and required to solve equations \eqref{tototo123321}. Here we only require that these equations have solutions.

\begin{proposition}
\label{monique}
Let $N,M\in\mathbb N$ and let $q_1,\ldots, q_N,p_1,\ldots ,p_M $ be two sets of distinct elements in $\mathbb H$ and let $\C_1,\ldots ,\C_N,\D_1,
\ldots ,\D_M\in\mathbb H^{n\times n}$.
Then there is a $\mathbb H^{n\times n}$-valued polynomial $T(p)=\sum_{a}p^aT_a$ such that
\[
\begin{split}
\sum_a q_j^aT_a&=\C_j,\quad j=1,\ldots, N,\\
\sum_a T_ap_k^a&=\D_k,\quad k=1,\ldots, M,
\end{split}
\]
if and only if the equations
\begin{equation}
\label{tototo123321}
q_jX_{jk}-X_{jk}p_k=\C_j-\D_k\quad j=1,\ldots, N,\quad k=1,\ldots, M
\end{equation}
are solvable.
\label{bolot}
\end{proposition}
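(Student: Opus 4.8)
The plan is to reduce the two-sided (bilateral) interpolation problem to a standard linear-algebra statement about Sylvester equations by working coefficient-wise in the quaternionic polynomial ring. First I would observe that the necessity of the solvability of \eqref{tototo123321} is essentially automatic: if such a polynomial $T(p)=\sum_a p^a T_a$ exists, then for each pair $(j,k)$ set $X_{jk}=\sum_{a,b:\,a>b}q_j^{a-b-1}T_a p_k^{b}$ — more precisely, write $q_j^a - p_k^a = (q_j - p_k)\left(\sum_{b=0}^{a-1}q_j^{a-1-b}p_k^{b}\right)$ so that, setting $X_{jk}=\sum_a\left(\sum_{b=0}^{a-1}q_j^{a-1-b}T_a p_k^{b}\right)$ appropriately rearranged, one gets $q_jX_{jk}-X_{jk}p_k = \sum_a (q_j^a T_a - T_a p_k^a) = \C_j - \D_k$. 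So the interpolation data must be compatible in exactly this sense.

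For sufficiency, the key step is to recast the problem in matrix form. Collect the unknown coefficients $T_0,T_1,\dots,T_d$ (for a degree $d$ to be chosen large enough) into a single tall block vector, and write the $N+M$ interpolation conditions as a linear system over $\mathbb H$ — or, applying $\chi$, as a linear system over $\mathbb C$, which makes the rank arguments transparent since $\chi$ is a ring homomorphism. The left conditions $\sum_a q_j^a T_a = \C_j$ and the right conditions $\sum_a T_a p_k^a = \D_k$ together form an affine system; the question is whether it is consistent. A clean way to organize this is the lateral/Lagrange approach: build a polynomial $T^{(1)}$ (using the confluent-free Lagrange/Vandermonde construction available since the $q_j$ are distinct, cf. the argument behind Proposition \ref{monique} and \cite{bolotnikov2019lagrange}) that meets the left conditions exactly, and separately a polynomial $T^{(2)}$ meeting the right conditions; then seek a correction $T = T^{(1)} + R$ where $R$ must vanish (on the left) at all $q_j$ and must absorb the right-side discrepancy $\D_k - \sum_a (T^{(1)})_a p_k^a$. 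Writing $R(p) = \left(\prod_j (p - q_j)\right)^{\star}\star S(p)$ — using the $\star$-product so that $R$ genuinely left-vanishes at each $q_j$ (here one must be careful, since $\star$-multiplication does not preserve right-zeros, which is precisely why the mixing conditions enter) — the right-interpolation requirement on $R$ becomes a set of equations on $S$ whose solvability is governed by invertibility of a matrix built from the spheres $[q_j]$ and $[p_k]$, and this matrix is invertible / the equations reduce exactly to the solvability of the Sylvester equations $q_j X_{jk} - X_{jk} p_k = \C_j - \D_k$.

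I expect the main obstacle to be handling the interaction between left-evaluation and right-evaluation in the noncommutative, slice-hyperholomorphic setting: the $\star$-product makes "vanishing at $q_j$" and "taking value $\D_k$ at $p_k$" refer to genuinely different operations (left slice evaluation vs. the substitution $\sum_a T_a p_k^a$ which is a right-type evaluation), and a polynomial that left-vanishes at $q_j$ generally does not right-vanish there. The careful bookkeeping is to express both families of conditions through a single coefficient vector, push everything through $\chi$ to land in $\mathbb C^{2\times 2}$-block matrix language, and then invoke the classical fact (Roth's removal rule / the theory of $\begin{pmatrix}q_j & \C_j - \D_k \\ 0 & p_k\end{pmatrix}$) that the block upper-triangular matrix $\begin{pmatrix} \mathrm{diag}(q_j) & (\C_j - \D_k) \\ 0 & \mathrm{diag}(p_k)\end{pmatrix}$ is similar to the block diagonal one precisely when all the Sylvester equations \eqref{tototo123321} are solvable; minimality of the degree $d$ and the distinctness of the nodes guarantee the Vandermonde-type blocks have full rank, so consistency of the affine system is equivalent to that similarity, completing the argument. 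The routine verifications I would omit are the explicit Lagrange/Vandermonde inversions and the bookkeeping that the $\chi$-image of the coefficient system is exactly the complex system one expects.
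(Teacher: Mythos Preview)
The paper does not prove this proposition; it is quoted (in a slightly weakened form) from \cite[Proposition 3.3]{bolotnikov2019lagrange}, so there is no in-paper argument to compare your attempt against.

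Your necessity direction is correct once the coefficient is sandwiched between the two powers: with
\[
X_{jk}=\sum_a\sum_{b=0}^{a-1}q_j^{\,a-1-b}\,T_a\,p_k^{\,b}
\]
one gets $q_jX_{jk}-X_{jk}p_k=\sum_a\bigl(q_j^{a}T_a-T_ap_k^{a}\bigr)=\Phi_j-\Psi_k$ by telescoping. Note that the auxiliary identity you wrote first, $q_j^{a}-p_k^{a}=(q_j-p_k)\sum_b q_j^{a-1-b}p_k^{b}$, is false over $\mathbb H$; only the version with $T_a$ in the middle survives noncommutativity, and that is the one you actually use.

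The sufficiency sketch has real gaps. First, pushing through $\chi$ lands you in a \emph{constrained} complex problem: the unknowns $\chi(T_a)$ must lie in $\mathrm{ran}\,\chi$ (the $E_n$-symmetry \eqref{Isym}), so the standard Roth removal rule for $\begin{pmatrix}\mathrm{diag}(q_j)&*\\0&\mathrm{diag}(p_k)\end{pmatrix}$ does not directly decide consistency of your affine system; you would still owe an argument that a complex solution can be averaged back into $\mathrm{ran}\,\chi$, or else work intrinsically over $\mathbb H$. Second, the step where you factor the correction as $R(p)=\bigl(\text{left annihilator of the }q_j\bigr)\star S(p)$ and then claim that the right-evaluation conditions on $S$ ``reduce exactly'' to the Sylvester equations \eqref{tototo123321} is asserted, not shown: the $\star$-product does not interact simply with the right evaluation $\sum_a T_ap_k^{a}$, and it is precisely this interaction that produces the compatibility conditions. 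In Bolotnikov's argument this reduction is carried out explicitly via two-sided divided differences, and that computation is the substance of the proof; your outline names the destination but skips the road.
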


We note that it could be that $p_j=q_k$ for some choices of indices $j$ and $k$. Then the corresponding equation \eqref{tototo123321} need not be solvable for arbitrary choices of $\C_j$ and $\D_k$.
We further remark that equations \eqref{tototo123321}  are always solvable when the $q_j$ and $p_k$ lie on different spheres.\smallskip

Last but not least, we recall that the notion of kernels with a finite number of negative squares still makes sense in the quaternionic setting; see
Remark \ref{negsq}.

\section{Proof of Theorem \ref{even}}
\setcounter{equation}{0}

Before giving the proof, we remark that, in contrast to the complex setting, the condition
\begin{equation}
\C(p)\ge0,\quad p+\overline{p}=0
\label{PR2}
\end{equation}
will not hold in general; see Example \ref{567764}.\smallskip

\begin{proof}[Proof of Theorem \ref{even}:]   It will be convenient to set, where defined, $R(x)=\chi(\C(x))$ where $x\in\mathbb R$. The main ingredients
of the proof appears in Step 3, which was proved in our previous paper \cite{MR3904447}, and in the application of Theorem \ref{ot200} in the present setting.
We first give an outline of the proof and then present the proof in a number of steps.\\

{\sl Outline of the proof:} By fixing $i\in\mathbb S$, we consider $\chi=\chi_i$, we associate with the function $\C(p)$ the rational function of a complex variable $z\in\mathbb C_i$
\begin{equation}
\label{chiR123}
R(z)=\chi(D)+\chi(C)\left(zI_{2N}-\chi(A)\right)^{-1}\chi(B)
\end{equation}
and verify that $R$ is even (Steps 1 and 2). We then verify in Steps 3 and 4 that $R(iy)\ge 0$ for real $y$ where defined. We then check in Step 5 that
$R(z)$ satisfies the symmetry:
\begin{equation}
\label{symm123}
E^{-1}\overline{R(\overline{z})}E=R(z).
\end{equation}
Using Theorem \ref{ot200}, we deduce that there is a right pseudo-spectral factorization, uniquely determined by its value, say $I_{2n}$, at infinity, such that
\[
R(z)=L_+^\sharp(z)L_+(z).
\]
We show in Steps 6, 7 and 8, that $L_+$ satisfies also the symmetry \eqref{symm123}. Uniqueness of the normalized spectral factor allows then to conclude. The result follows by inverting the map
$\chi$ and then extending from $\mathbb C_i$ to $\mathbb H$; see Step 9.\vspace{1cm}

We now go over the above strategy in a detailed manner:\\

STEP 1: {\sl The function $R(x)$ is a rational function of the real variable $x$ and satisfies
\begin{equation}
\label{batsheva}
R(-x)^*=R(x).
\end{equation}
}

To prove the claim, we restrict \eqref{PR1} to real values $p=x$ and applying the map $\chi$ we get
\[
(\chi(\C)(-x))^*=(\chi(\C))(x)
\]
 and hence \eqref{batsheva}. Furthermore, restricting \eqref{finfin} to real $x$ and applying $\chi$ now gives
\begin{equation}
\label{Rchi}
R(x)=(\chi(\C))(x)=\chi(D)+\chi(C)\left(xI_{2N}-\chi(A)\right)^{-1}\chi(B),\quad x\in \mathbb R\setminus\sigma(\chi(A)),
\end{equation}
which shows that $R(x)$ is a rational function of $x$.\\

STEP 2: {\sl $R$ admits a (uniquely defined) analytic continuation, which is even.}\\

Both sides of \eqref{chiR123} are ${\mathbb C}^{2n\times 2n}$-valued rational function of the real variable $x$, say $R(x)$, and analytic extension applied to \eqref{batsheva}
shows that $R(-\overline{z})^*=R(z)$, i.e. $R(z)$ is even.\\

STEP 3: {\sl The kernel $K_R(z,w)$ has a finite number of negative squares in $\mathbb C_r$.}\\

By taking $p=x$ and $q=y$ real and applying $\chi$ to the kernel $K_\C$ we see that $K_R(x,y)$ has a finite number of negative squares in $(0,\infty)$.
From our previous work \cite{MR3904447} we know that the meromorphic extension of $R$ to the right open half-plane, is such that $K_R(z,w)$ has a finite number of negative squares.\\

STEP 4: {\sl Where defined, $R(iy)\ge 0$ for real $y$.}\\

From Corollary \ref{cy789} we have that $R\in{\mathcal GP}$.
Since $R$ is even, we have $R(iy)=R(iy)^*$ and so $R(iy)\ge 0$.\\

STEP 5: {\sl The function $R$ satisfies the symmetry \eqref{symm123}.}\\

Indeed, for real $x$, it follows from \eqref{Rchi} that the matrix $R(x)$ is in the range of $\chi$, or, equivalently (see \cite[Lemma 3.3]{MR3904447})
\begin{equation}
E^{-1}\overline{R(x)}E=R(x),
\end{equation}
which extends analytically to
\begin{equation}
\label{symm123}
E^{-1}\overline{R(\overline{z})}E.
\end{equation}


This ends the proof of Step 5. As mentioned in the outline of the proof, there is a pseudo-spectral factorization, uniquely determined by its value, say $I_{2n}$, at infinity, such that
$R(z)=L^\sharp(z)L(z)$. Since the symmetry \eqref{symm123} is multiplicative, and since $R$ is invariant under \eqref{symm123} we have
\[
\begin{split}
R(z)&=E^{-1}\overline{R(\overline{z})}E\\
&=E^{-1}\overline{L(-z)^*}EE^{-1}\overline{L(\overline{z})}E\\
&=L^\sharp(z)L(z).
\end{split}
\]
To pursue the proof,  we now want to check that, under the normalization $L(\infty)=I_n$ it holds that
\begin{equation}
E^{-1}\overline{L(\overline{z})}E=L(z).
\label{ronit}
\end{equation}
To show that \eqref{ronit} hold, we need to show that $E^{-1}\overline{L(\overline{z})}E$ is the (uniquely defined after normalization) right pseudo-spectral factor associated with $R$ using the
minimal realization \eqref{real2} of $R(z)$. By uniqueness of the factor we will then get \eqref{ronit}.\\


For the following Step 6, see also \cite[Section 5, p. 27]{abgr1}. We use the notation
\begin{equation}
\begin{pmatrix}
\sf A&\sf B\\ \sf C& \sf D\end{pmatrix}=\begin{pmatrix}\chi(A)&\chi(B)\\ \chi(C)&\chi(D)\end{pmatrix},
\end{equation}
so that \eqref{chiR123} becomes
\begin{equation}
\label{reaR1}
R(z)={\sf D}+{\sf C} (zI_{2N}- {\sf A})^{-1}{\sf B}.
\end{equation}

STEP 6: {\sl There exists a uniquely defined invertible matrix $S\in\mathbb C^{2N\times 2N}$ such that
\begin{equation}
\begin{pmatrix}S&0\\0&I_{2n}\end{pmatrix}\begin{pmatrix}\sf  A&\sf  B\\ \sf  C&\sf  D\end{pmatrix}=
\begin{pmatrix}I_{2N}&0\\0&E^{-1}\end{pmatrix}
\begin{pmatrix}\overline{\sf  A}&\overline{\sf  B}\\ \overline{\sf  C}&\overline{\sf  D}\end{pmatrix}
\begin{pmatrix}I_{2N}&0\\0&E\end{pmatrix}
\begin{pmatrix}S&0\\0&I_{2n}\end{pmatrix}.
\label{hivers86tauee}
\end{equation}
Furthermore,
\begin{equation}
S\overline{S}=-I_{2N}
\label{hivers86taugoh}
\end{equation}
and
\begin{equation}
\overline{H}=-S^{-*}HS^{-1}.
\label{hivers86tau}
\end{equation}
}

A minimal realization of $E^{-1}\overline{R(\overline{z})}E$ is given by
\begin{equation}
E^{-1}\overline{R(\overline{z})}E=E^{-1}\overline{\sf  D}E+E^{-1}\overline{\sf  C}(zI_{2N}-\overline{\sf  A})^{-1}\overline{\sf  B}.
\label{real2}
\end{equation}
The first claim in Step 6 comes then from the uniqueness of the minimal realization and the equality \eqref{symm123}.
To prove \eqref{hivers86taugoh} we rewrite \eqref{hivers86tauee} as
\begin{equation*}
\begin{pmatrix}\overline{S}&0\\0&I_{2n}\end{pmatrix}\begin{pmatrix}\overline{\sf A}&\overline{\sf B}\\ \overline{\sf C}&\overline{\sf D}\end{pmatrix}=
\begin{pmatrix}I_{2N}&0\\0&E^{-1}\end{pmatrix}
\begin{pmatrix}{\sf A}&{\sf B}\\ {\sf C}&{\sf  D}\end{pmatrix}
\begin{pmatrix}I_{2N}&0\\0&E\end{pmatrix}
\begin{pmatrix}\overline{S}&0\\0&I_{2n}\end{pmatrix},
\end{equation*}
that is, since $E=-E^{-1}$
\begin{equation*}
  \begin{pmatrix}\overline{S}&0\\0&-I_{2n}\end{pmatrix}\begin{pmatrix}I_{2N}&0\\0&E^{-1}\end{pmatrix}
  \begin{pmatrix}\overline{\sf A}&\overline{\sf B}\\ \overline{\sf C}&\overline{\sf D}\end{pmatrix}\begin{pmatrix}I_{2N}&0\\0&E\end{pmatrix}=
\begin{pmatrix}{\sf A}&{\sf B}\\ {\sf C}&{\sf D}\end{pmatrix}
\begin{pmatrix}\overline{S}&0\\0&-I_{2n}\end{pmatrix},
\end{equation*}
which is equivalent to
\begin{equation}
    \begin{pmatrix}I_{2N}&0\\0&E^{-1}\end{pmatrix}
  \begin{pmatrix}\overline{\sf A}&\overline{\sf B}\\ \overline{\sf C}&\overline{\sf D}\end{pmatrix}\begin{pmatrix}I_{2N}&0\\0&E\end{pmatrix}\begin{pmatrix}-\overline{S}^{-1}&0\\0&I_{2n}\end{pmatrix}=
\begin{pmatrix}-\overline{S}^{-1}&0\\0&-I_{2n}\end{pmatrix}\begin{pmatrix}{\sf A}&{\sf B}\\ {\sf C}&{\sf D}\end{pmatrix},
\label{hivers86taueeyarkon}
\end{equation}
and hence the result by uniqueness of $S$.\smallskip

To prove \eqref{hivers86tau} we use the uniqueness of $H$ satisfying
the equations
\begin{eqnarray}
\label{rubeole1}
H{\sf A}&=&-{\sf A}^*H\\
H{\sf B}&=&{\sf C}^*.
\label{rubeole2}
\end{eqnarray}
From \eqref{rubeole1} and \eqref{hivers86tauee} we can write
\[
\begin{split}
\overline{H}\overline{\sf A}&=-\overline{\sf A^*}\overline{H}\\
&\iff\\
\overline{H}S{\sf A}S^{-1}&=-S^{-*}{\sf A}^*S^{*}\overline{H}\\
&\iff\\
S^*\overline{H}S{\sf A}&=-{\sf A}^*S^{*}\overline{H}S.
\end{split}
\]

Next, using $E^{-1}=-E$ and $\overline{S}=-S$ we have
\[
\overline{\sf C}=E^{-1}{\sf C}\overline{S}=E{\sf C}S^{-1},
\]
and so starting from \eqref{rubeole2},
\[
\begin{split}
\overline{H}\overline{\sf B}&=\overline{\sf C}^*\\
&\iff\\
\overline{H}\overline{\sf B}E&=\overline{\sf C}^*E\\
&\iff\\
\overline{H}SB&=S^{-*}{\sf C}^*E^*E\\
&\iff\\
S^*\overline{H}S{\sf B}&={\sf C}^*.
\end{split}
\]
So, both $H$ and $-S^*HS$ satisfy the equations \eqref{rubeole1}-\eqref{rubeole2} characterizing $H$, and hence
\eqref{hivers86tau}.\\

At this stage we consider the two minimal realizations \eqref{reaR1} and \eqref{real2} of $R(z)$ and show, using formula \eqref{corona123}, that they lead to the same pseudo-spectral factors.\\

STEP 7: {\sl Let $\mathcal M_\pm$ and $\mathcal M^\times_\pm$ be the four
subspaces of $\mathbb C^{2N}$ defined as in the statement of Theorem \ref{ot200} associated with the realization \eqref{reaR1} (i.e. \eqref{chiR123}).
Then the corresponding spaces associated with the realization \eqref{real2} are $S\mathcal M_\pm$ and $S\mathcal M_\pm^\prime$.}\\

Since $\overline{\sf A}=S{\sf A}S^{-1}$ the spaces $M_\pm$ are invariant under conjugation
We have
\[
\begin{split}
  \overline{\sf A}S\mathcal M_+&=S{\sf A}S^{-1}S\subset \mathcal M_+\\
  &=S{\sf A}\mathcal M_+\\
  &\subset S\mathcal M_+\quad\mbox{\rm since ${\sf A}\mathcal M_+\subset \mathcal M_+$}
\end{split}
\]
and $\sigma(\overline{\sf A}|_{\mathcal M_+})\subset\mathbb C_r$
and similarly for $\mathcal M_-$.\\

Note that
\[
\overline{\sf A^\times}=\overline{\sf A}-\overline{\sf B}\overline{\sf C}=S{\sf A}S^{-1}-S{\sf B}E^{-1}E{\sf C}S^{-1}=S{\sf A}^\times S^{-1}
\]
So  $\overline{\sf A^\times}S \mathcal M^\times_\pm\subset S\mathcal M^\times_\pm$ and $\sigma(\overline{{\sf A}^\times}|_{\mathcal M^\times_+})\subset\mathbb C_r$ and
$\sigma(\overline{\sf A^\times}|_{\mathcal M^\times_-})\subset\mathbb C_\ell$.\\

Since we have the matching conditions \eqref{mathcond} we get
\begin{equation}
\label{sum1}
\mathbb C^{2N}=S\mathcal M_-\stackrel{\cdot}{+}S\mathcal M_+^\times
\end{equation}

and

\begin{equation}
\label{sum1}
\mathbb C^{2N}=S\mathcal M_+\stackrel{\cdot}{+}S\mathcal M_-^\times
\end{equation}

We now check condition ${\rm (v)}$ from Theorem \ref{ot200}. The first condition is
\begin{equation}
\label{567ui}
\overline{H}(S\mathcal M_+)=(S\mathcal M_+)^{\perp}
\end{equation}
i.e.
\[
\langle \overline{H}Sm_+,Sn_+\rangle_{\mathbb C^{2N}}=0,\quad m_+,n_+\in\mathcal  M_+
\]
But this can be rewritten as
\[
\langle S^*\overline{H}Sm_+,n_+\rangle=0
\]
which holds since $S^*\overline{H}S=H$ and $H\mathcal M_+=\mathcal M_+^{\perp}$. The other claims in ${\rm (v)}$ are proved in the same way.\\


STEP 8: {\sl The spectral factors associated to the two realizations of $R$ coincide.}\\

Let $\Pi_+$ be the projection onto $\mathcal M_+^\times$ along $\mathcal M_-$. Then, the projection onto $S\mathcal M_+^\times$ along $S\mathcal M_-$ is $S\Pi_+S^{-1}$.
Indeed, $P=S\Pi_+S^{-1}$ is a projection since
\[
P^2=S\Pi_+S^{-1}S\Pi_+S^{-1}=S\Pi_+^2S^{-1}=P
\]
with range $S({\rm ran}\,\Pi_+)$ and kernel $S({\ker}\,\Pi_+)$. So using formula \eqref{corona123}
the pseudo-spectral factor associated with the realization
\[
R(z)=I_{2n}+E^{-1}\overline{\sf C}(zI_{2N}-\overline{\sf A})^{-1}\overline{\sf B}E,
\]
with value $I_{2n}$ at infinity is
\[
\begin{split}
  L_1(z)&=I_{2n}+E^{-1}\overline{\sf C}(S\Pi_+S^{-1})(zI_N-\overline{\sf A})^{-1}\overline{\sf B}E\\
  &=I_{2n}+{\sf C} S^{-1}(S\Pi_+S^{-1})(zI_{2N}-S{\sf A}S^{-1})^{-1}S\sf B\\
    &=I_{2n}+{\sf C} \Pi_+(zI_{2N}-{\sf A})\sf B\\
  &=L_+(z),
\end{split}
\]
which is formula \eqref{corona123} for the right spectral factor, and hence, by uniqueness of the factor, this concludes the proof of the step.\\

STEP 9: {\sl The factorization \eqref{ronron} holds.}\\

Let
\[
L(x)=\chi(M(x))
\]
where $M$ is $\mathbb H^{n\times n}$-valued.
We have
\[
\chi(\C(x))=R(x)=\chi(M(-x)^*)\chi(M(x))
\]
and the result follows by inverting $\chi$ and then extending first $x$ to $z\in\mathbb C_i$ and then from $\mathbb C_i$ to $\mathbb H$. We note that such an extension obviously exists since we can let $i\in\mathbb S$ vary and it is unique by the Identity Principle, see \cite{zbMATH06658818}.
\end{proof}

\begin{corollary}
Let $\C$ be as in Theorem \ref{even}.
Then, there is an $\mathbb H^{n\times n}$-valued slice-hyperholomorphic rational function $L_-$, without poles and zeros in the left right half-space, uniquely determined by the condition
$L_-(\infty)=I_n$,
and such that
\begin{equation}
\label{ronron}
\C(p)=L_-^\sharp(p)\star L_-(p).
\end{equation}
\label{even123321}
\end{corollary}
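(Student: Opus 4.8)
The plan is to mimic the proof of Theorem \ref{even} almost verbatim, replacing the right pseudo-spectral factor by the left one. Concretely, as in the proof of Theorem \ref{even}, fix $i\in\mathbb S$, set $\chi=\chi_i$, and associate to $\C(p)$ the $\mathbb C^{2n\times 2n}$-valued rational function of a complex variable
\[
R(z)=\chi(D)+\chi(C)\left(zI_{2N}-\chi(A)\right)^{-1}\chi(B).
\]
Steps 1 through 5 of the proof of Theorem \ref{even} apply unchanged: $R$ is an even rational function, the kernel $K_R(z,w)$ has a finite number of negative squares in $\mathbb C_r$, $R(iy)\ge 0$ where defined, and $R$ satisfies the symmetry $E^{-1}\overline{R(\overline z)}E=R(z)$. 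In particular $R$ is in ${\mathcal GP}$ and satisfies the hypotheses of Theorem \ref{ot200}.

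By Theorem \ref{ot200}, $R$ admits a \emph{left} pseudo-spectral factorization $R(z)=L^\sharp(z)L(z)$ where $L$ has no poles or zeros in the open right half-plane, and this $L$ is uniquely determined by the normalization $L(\infty)=I_{2n}$; it is given by formula \eqref{mainform2}, i.e. $L(z)=I_{2n}+{\sf C}\Pi_-(zI_{2N}-{\sf A})^{-1}{\sf B}$ with $\Pi_-$ the projection onto $\mathcal M_-^\times$ along $\mathcal M_+$. I would then repeat Steps 6 through 8 with $\Pi_-$ in place of $\Pi_+$: Step 6 produces the same invertible $S$ with $\overline{\sf A}=S{\sf A}S^{-1}$, $\overline{\sf A^\times}=S{\sf A}^\times S^{-1}$, $S\overline S=-I_{2N}$ and $S^*\overline H S=H$; Step 7 shows that the four subspaces attached to the realization \eqref{real2} are $S\mathcal M_\pm$ and $S\mathcal M_\pm^\times$ (the argument for $\mathcal M_+$ and $\mathcal M_-^\times$, which enter $\Pi_-$, is identical to the one given for $\mathcal M_-$ and $\mathcal M_+^\times$); and Step 8 shows that the left pseudo-spectral factor built from the realization \eqref{real2} equals $L(z)$, since $S\Pi_-S^{-1}$ is the projection onto $S\mathcal M_-^\times$ along $S\mathcal M_+$. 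Uniqueness of the normalized left factor then forces $E^{-1}\overline{L(\overline z)}E=L(z)$, so $L$ lies in the range of $\chi$ on the real line, hence $L(x)=\chi(M_-(x))$ for an $\mathbb H^{n\times n}$-valued rational function $M_-$.

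Finally, as in Step 9 of the proof of Theorem \ref{even}, from $\chi(\C(x))=R(x)=\chi(M_-(-x)^*)\chi(M_-(x))$ one inverts $\chi$ to get $\C(x)=M_-(-x)^*M_-(x)$ for real $x$, then extends from $\mathbb R$ to $\mathbb C_i$ and from $\mathbb C_i$ to $\mathbb H$ (the extension exists by letting $i\in\mathbb S$ vary and is unique by the Identity Principle), obtaining $\C(p)=L_-^\sharp(p)\star L_-(p)$ with $L_-(\infty)=I_n$ and $L_-$ without poles and zeros in the open right half-space. Since all of the structural steps were already carried out for the right factor in the proof of Theorem \ref{even}, there is no genuinely new obstacle here; the only thing to double-check is that every place in Steps 7 and 8 where $\mathcal M_-$ and $\mathcal M_+^\times$ appeared is correctly swapped for $\mathcal M_+$ and $\mathcal M_-^\times$, and that the matching condition $\mathbb C^{2N}=\mathcal M_+\stackrel{\cdot}{+}\mathcal M_-^\times$ (rather than the other one) is the one invoked. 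In fact the cleanest write-up simply observes that Theorem \ref{even} already \emph{asserts} the existence of $L_-$ with exactly these properties, so this corollary is nothing more than the "$L_-$ half" of that theorem recorded separately for later reference.
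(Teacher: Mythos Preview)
Your proposal is correct and matches the paper's treatment. The paper gives no separate proof of this corollary; it is simply the $L_-$ half of Theorem \ref{even}, whose proof in Section 6 was written out explicitly only for $L_+$ (Steps 6--9 use $\Pi_+$), and your observation that one just swaps $\Pi_+$ for $\Pi_-$ and $\mathcal M_-,\mathcal M_+^\times$ for $\mathcal M_+,\mathcal M_-^\times$ throughout is exactly the intended reading.
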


The same limiting process as in the proof of Corollary \ref{cycy} gives:

\begin{corollary}
Let $\C\in\mathcal{GPE}(\mathbb H)$ be analytic at infinity. Then there exist factorizations $\C(p)=L_+^\sharp(p)\star L_+(p)=
L_-^\sharp(p)\star L_-(p)$, where the poles and zeros of $L_+$ (resp. $L_-$) are in
the closed left half-space (resp. the closed right half-space). When $\C$ is a polynomial so are the factors $L_\pm(z).$
\end{corollary}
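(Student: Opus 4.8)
The plan is to transfer the problem to the complex setting through the map $\chi$, apply the complex Corollary \ref{cycy} to $R=\chi(\C)$, and carry the factorization back to $\mathbb H$; the one step that goes beyond the proof of Theorem \ref{even} is to follow the symmetry \eqref{symm123} through the limiting argument that handles a degenerate value at infinity.

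Fix $i\in\mathbb S$, put $\chi=\chi_i$ and form $R(z)=\chi(D)+\chi(C)(zI_{2N}-\chi(A))^{-1}\chi(B)$ with $D=\C(\infty)$. Steps 1--5 of the proof of Theorem \ref{even} use only that $\C\in\mathcal{GPE}(\mathbb H)$ is slice hyperholomorphic at infinity, so they apply verbatim: $R$ is an even $\mathbb C^{2n\times 2n}$-valued rational function, $R\in\mathcal{GP}$, one has $R(iy)\ge 0$ wherever defined, and $R$ satisfies $E^{-1}\overline{R(\overline z)}E=R(z)$. Letting $y\to\infty$ in $R(iy)\ge 0$ gives $R(\infty)=\chi(D)\ge 0$, hence $D\ge 0$ as a quaternionic Hermitian matrix and $\e I_{2n}+R(\infty)>0$ for every $\e>0$. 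Thus $R$ satisfies the hypotheses of Corollary \ref{cycy}, which yields $\mathbb C^{2n\times 2n}$-valued rational functions $L_{+}$ and $L_{-}$, analytic at infinity, with $R=L_+^\sharp L_+=L_-^\sharp L_-$, the poles and zeros of $L_+$ (respectively $L_-$) lying in the closed left (respectively right) half-plane, and with $L_\pm$ polynomial whenever $R$ is. Since $\chi$ is an injective ring homomorphism, $\chi(A)$ is nilpotent exactly when $A$ is, so $R$ is a polynomial exactly when $\C$ is.

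It remains to see that $L_\pm$ inherit the symmetry $E^{-1}\overline{L_\pm(\overline z)}E=L_\pm(z)$. One revisits the limiting construction inside the proof of Corollary \ref{cycy}: $L_+=\lim_{\e\to 0}L_{+,\e}$, where $L_{+,\e}$ is the unique normalized right pseudo-spectral factor of $\e I_{2n}+R$, a function nondegenerate at infinity; the limit exists because $(\chi(A),\chi(B))$ is controllable (minimality of the realization) and because $L_{+,\e}(iy_j)^*L_{+,\e}(iy_j)=\e I_{2n}+R(iy_j)$ is uniformly bounded for $\e\in(0,1]$ at $2N$ suitably chosen points $iy_1,\dots,iy_{2N}$, exactly as in that proof. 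Now $\e I_{2n}+R=\e\chi(I_n)+R$ again satisfies the $E$-symmetry, and Steps 5--8 of the proof of Theorem \ref{even} use only that symmetry together with uniqueness of the normalized pseudo-spectral factor; applied to $\e I_{2n}+R$ they show each $L_{+,\e}$ is $E$-symmetric, and similarly for the left factor. Since the $E$-symmetry is a pointwise linear, hence closed, condition, the limits $L_+$ and $L_-$ are $E$-symmetric as well.

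Finally, by \cite[Lemma 3.3]{MR3904447} the $E$-symmetry of $L_\pm$ means $L_\pm(x)\in{\rm ran}\,\chi$ for real $x$, and then for $z\in\mathbb C_i$, so $M:=\chi^{-1}(L_+)$ is a well-defined $\mathbb H^{n\times n}$-valued rational function on $\mathbb R$, slice hyperholomorphic by Proposition \ref{slicef}, which extends to $\mathbb C_i$ and then, uniquely by the Identity Principle, to $\mathbb H$. Applying $\chi^{-1}$ to $R=L_+^\sharp L_+$ on $\mathbb R$ and using that $\chi$ intertwines $\sharp$ with the adjoint and the $\star$-product with the matrix product (as in Steps 1 and 9 of the proof of Theorem \ref{even}) gives $\C(x)=M^\sharp(x)\star M(x)$ on $\mathbb R$, hence $\C=L_+^\sharp\star L_+$ on all of $\mathbb H$; the pole and zero locations and the polynomial conclusion are preserved under $\chi^{-1}$, and $L_-$ is obtained the same way from the left pseudo-spectral factor (or via Corollary \ref{even123321}). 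The main obstacle is precisely this symmetry-tracking as $\e\to 0$: the normalization $L_\pm(\infty)=I_n$ that drives the uniqueness argument in Theorem \ref{even} is lost once $\C(\infty)$ is degenerate, so one cannot quote uniqueness of the limiting factor directly and must instead run the symmetry argument at each positive $\e$, where $\e I_{2n}+R$ is still nondegenerate at infinity, and then pass to the limit using that ${\rm ran}\,\chi$ is closed.
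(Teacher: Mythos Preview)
Your proposal is correct and follows essentially the same route as the paper: verify $\C(\infty)\ge 0$ via $\chi$ and a limit on the imaginary axis, then run the $\e$-perturbation and limiting argument of Corollary~\ref{cycy}. The paper's own proof is a two-line sketch (``goes as in Corollary~\ref{cycy}''), whereas you spell out the one point it suppresses, namely that the $E$-symmetry must be tracked through the limit by applying Steps~5--8 of Theorem~\ref{even} to each $\e I_{2n}+R$ and then using closedness of ${\rm ran}\,\chi$; the only small caveat is that those steps are written under the normalization $L(\infty)=I_{2n}$, so when you invoke them for $\e I_{2n}+R$ you should note that $(\e I_{2n}+R(\infty))^{1/2}=\chi\bigl((\e I_n+D)^{1/2}\bigr)$ is itself $E$-symmetric, which is what makes the uniqueness argument survive the change of normalization.
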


\begin{proof}
The proof goes as in Corollary \ref{cycy}, but we need first to check that $\C(\infty)\ge 0$. To that purpose we use the map $\chi$ and taking the limit as $x\in\mathbb R$ goes to infinity.
\end{proof}
\section{Interpolation}
\setcounter{equation}{0}
In this section we consider some interpolation problems in the class of quaternionic rational generalized positive functions. We begin with
the following factorization result, counterpart of Corollary \ref{cor567}.

\begin{proposition}
Let $L_1,\ldots, L_U$ be rational $\mathbb H^{n\times n}$-valued functions analytic at infinity and assume that
\begin{equation}
\sum_{u=1}^U(L_u(\infty))^*L_u(\infty)>0.
\end{equation}
Then, there exists a $\mathbb H^{n\times n}$-valued rational function analytic at infinity and such that
\begin{equation}
\sum_{u=1}^UL_u^\sharp(p)\star L_u(p)=L^\sharp(p)\star L(p)
\end{equation}
\end{proposition}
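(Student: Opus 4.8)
The plan is to reduce the statement to Theorem~\ref{even}. Put
\[
\C(p)=\sum_{u=1}^{U}L_u^\sharp(p)\star L_u(p),
\]
an $\mathbb{H}^{n\times n}$-valued rational slice-hyperholomorphic function, analytic at infinity, with $\C(\infty)=\sum_{u=1}^{U}(L_u(\infty))^*L_u(\infty)=:D_0>0$. Since $\sharp$ is an involution that reverses $\star$-products, each summand satisfies $(L_u^\sharp\star L_u)^\sharp=L_u^\sharp\star L_u$, so $\C$ is even. It then remains to normalize the value at infinity, to check that the normalized function lies in $\mathcal{GPE}(\mathbb{H})$, to apply Theorem~\ref{even}, and to repackage the resulting factorization.

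\emph{Normalization and membership in $\mathcal{GPE}(\mathbb{H})$.} As $D_0>0$ is a quaternionic Hermitian matrix, it admits a positive Hermitian square root $D_0^{1/2}$ by the spectral theorem for such matrices (\cite{MR97h:15020}). Set $\C_1(p):=D_0^{-1/2}\C(p)D_0^{-1/2}$. Then $\C_1$ is again rational, slice-hyperholomorphic, even (conjugation by a constant Hermitian matrix preserves evenness), and $\C_1(\infty)=I_n$. To see that $\C_1\in\mathcal{GPE}(\mathbb{H})$, fix $i\in\mathbb{S}$ and pass to $R_1:=\chi(\C_1)$ on $\mathbb{C}_i$. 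Since $\chi$ carries $\star$-products to ordinary products and intertwines the quaternionic and complex notions of $\sharp$ (this is the passage from \eqref{finfin} to \eqref{chiR123} in the proof of Theorem~\ref{even}), one gets $R_1=\sum_{u=1}^{U}M_u^\sharp M_u$ with $M_u:=\chi(L_u)\,\chi(D_0^{-1/2})$, a complex rational function analytic at infinity with $R_1(iy)=\sum_{u=1}^{U}M_u(iy)^*M_u(iy)\ge 0$ wherever defined. By Corollary~\ref{cy789}$(a)$ the kernel $K_{R_1}$ has a finite number of negative squares in $\mathbb{C}_r$; and the equivalence between the number of negative squares of the quaternionic kernel $K_{\C_1}$ and of $K_{R_1}$ --- the device used in Step~3 of the proof of Theorem~\ref{even} and established in \cite{MR3904447} --- shows that $K_{\C_1}$ has a finite number of negative squares. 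Hence $\C_1\in\mathcal{GPE}(\mathbb{H})$.

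\emph{Conclusion.} Theorem~\ref{even} now provides an $\mathbb{H}^{n\times n}$-valued rational slice-hyperholomorphic $L_1$, without poles or zeros in the open right half-space, normalized by $L_1(\infty)=I_n$, such that $\C_1(p)=L_1^\sharp(p)\star L_1(p)$. Set $L(p):=L_1(p)D_0^{1/2}$, which is rational and analytic at infinity with $L(\infty)=D_0^{1/2}$. Since $D_0^{1/2}$ is a constant Hermitian matrix one has $L^\sharp=D_0^{1/2}L_1^\sharp$ and $F\star(G\,D_0^{1/2})=(F\star G)D_0^{1/2}$ (cf. Lemma~\ref{jeanette}), so
\[
L^\sharp(p)\star L(p)=D_0^{1/2}\bigl(L_1^\sharp(p)\star L_1(p)\bigr)D_0^{1/2}=D_0^{1/2}\C_1(p)D_0^{1/2}=\C(p)=\sum_{u=1}^{U}L_u^\sharp(p)\star L_u(p),
\]
which is the asserted factorization (and $L$ can even be chosen without poles or zeros in the open right half-space, with $L(\infty)=D_0^{1/2}$).

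The only point that is not formal bookkeeping is the finiteness of the number of negative squares of the $\star$-kernel $K_{\C_1}$; I obtain it by passing through $\chi$ to the complex setting --- where it follows from Corollary~\ref{cy789}$(a)$ --- and invoking the negative-squares equivalence of \cite{MR3904447}. Everything else rests on results already in hand (Theorem~\ref{even}, Corollary~\ref{cy789}, Lemma~\ref{jeanette}) together with elementary $\star$-product identities for constant matrices. If one prefers to bypass the reduction to Theorem~\ref{even}, one can instead rerun its proof with $R=\sum_{u=1}^{U}\chi(L_u)^\sharp\chi(L_u)$ in place of $\chi(\C)$: the required properties of $R$ (evenness, positivity on $i\mathbb{R}$, finitely many negative squares, and the $\chi$-symmetry \eqref{symm123}) are immediate from this expression, and Theorem~\ref{ot200} is applied with value $\chi(D_0)>0$ at infinity rather than $I_{2n}$, formulas \eqref{corona123}--\eqref{mainform2} accommodating a general positive value there.
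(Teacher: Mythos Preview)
Your reduction to Theorem~\ref{even} is exactly what the paper intends; the proposition is stated there without proof, as the direct quaternionic analogue of Corollary~\ref{cor567}. There is, however, a genuine slip in the normalization step that is specific to the quaternionic setting. The pointwise product $D_0^{-1/2}\C(p)D_0^{-1/2}$ is in general \emph{not} left slice-hyperholomorphic: left pointwise multiplication of $\sum_k p^k\C_k$ by a constant quaternionic matrix $A$ yields $\sum_k Ap^k\C_k$, which is not of the form $\sum_k p^kB_k$ unless the entries of $A$ commute with $p$. The same objection applies to your intermediate claim $L^\sharp=D_0^{1/2}L_1^\sharp$, which is false as a pointwise identity.

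The fix is to replace these by $\star$-products: set $\C_1:=D_0^{-1/2}\star\C\star D_0^{-1/2}$, i.e.\ the slice-hyperholomorphic function with coefficients $D_0^{-1/2}\C_kD_0^{-1/2}$, and note that $L^\sharp=D_0^{1/2}\star L_1^\sharp$. With this interpretation your coefficient-level computation $(L^\sharp\star L)_n=D_0^{1/2}(L_1^\sharp\star L_1)_nD_0^{1/2}=D_0^{1/2}(\C_1)_nD_0^{1/2}=\C_n$ is correct and the argument closes. Your alternative at the end --- bypassing the normalization and applying Theorem~\ref{ot200} directly to $R=\sum_u\chi(L_u)^\sharp\chi(L_u)$ with $D=\chi(D_0)>0$, then invoking uniqueness of the pseudo-spectral factor exactly as in the proof of Theorem~\ref{even} --- avoids the issue entirely and is the cleanest route.
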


We begin with a lemma; note that, in the lemma we allow for two points to be symmetric, i.e. it can be that $p_j+\overline{p_u}=0$.
\begin{lemma}
Let $N,M\in\mathbb N$ and let $q_1,\ldots, q_N,r_1,\ldots ,r_M $ be pairwise different points in $\mathbb H$. Assume that no three of the points
$q_1,\ldots, q_N$ and $-\overline{r_1},\ldots ,-\overline{r_M}$
are on the same sphere, and let $\C_1,\ldots ,\C_N,\D_1,
\ldots ,\D_M\in\mathbb H^{n\times n}$.
There exists a polynomial $T(p)$ such that
\begin{equation}
T(q_j)=\C_j,\quad j=1,\ldots, N
\end{equation}
and
\begin{equation}
T^\sharp(r_k)=\D_k,\quad j=1,\ldots, M.
\label{conditionc}
\end{equation}
\label{tali}
\end{lemma}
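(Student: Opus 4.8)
The plan is to recast the two conditions on $T$ and $T^\sharp$ as a single two-sided (left and right) polynomial interpolation problem, and then to apply Proposition \ref{bolot}, whose hypothesis — solvability of the mixing equations \eqref{tototo123321} — will have to be verified in the present situation.

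First I would eliminate the $\sharp$. Writing $T(p)=\sum_a p^aT_a$, one has $T^\sharp(r_k)=\sum_a(-r_k)^aT_a^*$; taking adjoints of both sides of \eqref{conditionc} and using $\overline{(-r_k)^a}=(-\overline{r_k})^a$, the condition $T^\sharp(r_k)=\D_k$ becomes
\[
\sum_a T_a\,(-\overline{r_k})^a=\D_k^*,\qquad k=1,\ldots,M .
\]
Thus, setting $p_k:=-\overline{r_k}$, the lemma asks exactly for a polynomial $T(p)=\sum_a p^aT_a$ with $\sum_a q_j^aT_a=\C_j$ for $j=1,\ldots,N$ and $\sum_a T_ap_k^a=\D_k^*$ for $k=1,\ldots,M$. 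The nodes $p_1,\ldots,p_M$ are pairwise distinct because the $r_k$ are, and $q_1,\ldots,q_N$ are pairwise distinct by hypothesis, so we are precisely in the setting of Proposition \ref{bolot} (the prescribed values being $\C_1,\ldots,\C_N$ on the left and $\D_1^*,\ldots,\D_M^*$ on the right).

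By Proposition \ref{bolot}, such a $T$ exists if and only if the mixing equations \eqref{tototo123321} for these data, namely
\[
q_j\,X_{jk}-X_{jk}\,p_k=\C_j-\D_k^*,\qquad j=1,\ldots,N,\quad k=1,\ldots,M ,
\]
are solvable. Here I would use the remark following Proposition \ref{bolot}: each such Sylvester-type equation over $\mathbb H^{n\times n}$ is solvable for an arbitrary right-hand side as soon as $q_j$ and $p_k=-\overline{r_k}$ lie on different spheres, since in that case the operator $X\mapsto q_jX-Xp_k$ has trivial kernel and is therefore invertible. The assumption that no three of $q_1,\ldots,q_N,-\overline{r_1},\ldots,-\overline{r_M}$ lie on a common sphere, together with their pairwise distinctness, is what I would invoke to place every pair $(q_j,-\overline{r_k})$ in this situation; all mixing equations are then solvable, and Proposition \ref{bolot} produces the desired $T$.

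The step I expect to be the main obstacle is exactly this last verification — showing that the hypothesis on spheres really does force every pair $(q_j,-\overline{r_k})$ onto distinct spheres, and treating directly the borderline case in which such a pair shares a sphere. In the latter case the representation formula on that sphere shows that $T^\sharp(r_k)$ is already determined by the values of $T$ on $[q_j]$, so simultaneously matching $\C_j$ at $q_j$ and $\D_k^*$ as a right value at $-\overline{r_k}$ reduces once more to solving precisely the corresponding equation \eqref{tototo123321}, and it is there that the sphere hypothesis must be brought to bear. Everything else in the argument is a routine rewriting together with the citation of Proposition \ref{bolot}.
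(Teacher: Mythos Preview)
Your approach is essentially the same as the paper's: you rewrite $T^\sharp(r_k)=\D_k$ as the right-sided condition $\sum_a T_a(-\overline{r_k})^a=\D_k^*$ and then invoke Bolotnikov's two-sided Lagrange interpolation (Proposition \ref{bolot}). The paper does exactly this, citing Bolotnikov's compatibility conditions directly and asserting that they are solvable because no three of the points $q_1,\ldots,q_N,-\overline{r_1},\ldots,-\overline{r_M}$ lie on a common sphere; your write-up is in fact more explicit than the paper's about where the sphere hypothesis enters.
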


\begin{proof}
We look for  $T$ of the form (with $N\in\mathbb N$ to be determined)
\[
T(p)=\sum_{a=0}^{N}p^aT_a.
\]
Conditions \eqref{conditionc} can be rewritten as
\[
\sum_{a=0}^N(-r_k)^aT_a^*=\Psi_k,\quad k=1,\ldots, N,
\]
that is, taking conjugate,
\begin{equation}
\label{theotherside}
\sum_{a=0}^NT_a(-\overline{r_k})^a=\Psi_k^*,\quad k=1,\ldots, N .
\end{equation}
The system of conditions in the lemma  corresponds thus to a two-sided interpolation problem of the kind considered by Vladimir Bolotnikov in \cite{bolotnikov2019lagrange}, without the compatibility
conditions being pre-assigned. A solution will then always exist since no three of the points $q_1,\ldots, q_N$ and $-\overline{r_1},\ldots ,-\overline{r_M}$ are on the same sphere, and hence the compatibility
conditions \cite[(3.2)]{bolotnikov2019lagrange} have always a solution.
\end{proof}

We consider the following problem:

\begin{problem}
\label{vartyu}
Given $p_1,\ldots, p_N\in\mathbb H$, not three of them on a common sphere,
and $\C_1,\ldots, \C_N\in\mathbb H^{n\times n}$,  find an even generalized positive $\mathbb H^{n\times n}$-valued function, slice hyperholomorphic in
$\mathbb H_+$ and such that
\begin{equation}
\C(p_j)=\C_j,\quad j=1,\ldots, N.
\end{equation}
\end{problem}

We first consider the case
\begin{equation}
p_u+\overline{p_v}\not=0,\quad \forall u,v\in\left\{1,\ldots, N\right\}
\label{inforce!!!}.
\end{equation}
In particular no points are purely imaginary. The condition holds in particular when
all the points are in the open right half-space.\\

\begin{proposition}
Assume \eqref{inforce!!!}.
Then Problem \ref{vartyu} has a solution.
\end{proposition}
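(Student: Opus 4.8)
The plan is to mimic, in the quaternionic setting, the Lagrange-type construction already used for Proposition~\ref{cro123321} in the complex case, exploiting the hypothesis that no three of the relevant points lie on a common sphere so that Lemma~\ref{tali} (equivalently, the two-sided interpolation result of \cite{bolotnikov2019lagrange}) applies without having to prescribe compatibility conditions. The key observation is that for each index $u$ we want a polynomial $P_u$ which vanishes (after $\star$-multiplication) at the other nodes, interpolates the prescribed value at $p_u$, and is normalized so that $P_u^\sharp\star P_u$ contributes exactly $\C_u$ at $p_u$. Summing the $P_u^\sharp\star P_u$ then produces an element of $\mathcal{GPE}(\mathbb H)$ meeting all the interpolation conditions, and Corollary~\ref{even123321} (or the factorization Corollary following it) finishes, although for mere solvability of Problem~\ref{vartyu} one does not even need the factorization --- the sum itself is the sought function, since $\mathcal{GPE}(\mathbb H)$ is a convex cone closed under the $\star$-product by the remark following Theorem~\ref{even}.

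In detail, first I would fix $i\in\mathbb S$ and recall that under \eqref{inforce!!!} none of the $p_u$ is purely imaginary and $p_u+\overline{p_v}\neq 0$ for all $u,v$. For each $u\in\{1,\ldots,N\}$ I would invoke Lemma~\ref{tali} (applied to the nodes $q_1,\ldots$ being the $p_j$ with $j\neq u$ together with $p_u$, and the $r_k$-nodes being $-\overline{p_u}$; here the no-three-on-a-sphere hypothesis on the $p_j$, together with \eqref{inforce!!!}, guarantees that the enlarged configuration of points $p_j$ and $-\overline{p_u}$ still has no three on a common sphere, which is exactly what Lemma~\ref{tali} requires) to obtain a $\mathbb H^{n\times n}$-valued polynomial $P_u$ with
\[
P_u(p_j)=0\ \ (j\neq u),\qquad P_u(p_u)=\C_u^{1/2}\ \text{(a fixed hermitian square root; cf.\ below)},\qquad P_u(-\overline{p_u})=I_n .
\]
The normalization $P_u(-\overline{p_u})=I_n$ is what makes $P_u^\sharp\star P_u$ behave correctly at $p_u$: by the definition \eqref{notsosharp} of $\sharp$ and Lemma~\ref{jeanette} (the identity \eqref{zebda}, using that $P_u^\sharp(-\overline{p_u})=P_u(p_u\ \text{with }p\text{ replaced appropriately})^*$ evaluates to $I_n$), one gets $(P_u^\sharp\star P_u)(p_u)=\C_u$, while $(P_u^\sharp\star P_u)(p_j)=0$ for $j\neq u$ because the right factor already vanishes there and the $\star$-product of a function vanishing at $p_j$ on the right stays zero at $p_j$. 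Then set
\[
\C(p)=\sum_{u=1}^N P_u^\sharp(p)\star P_u(p).
\]
Each summand is even (since $(F^\sharp\star F)^\sharp=F^\sharp\star F$) and its associated kernel \eqref{CCCC} is of the form $Z\star Z^*$-type, hence positive; a finite sum therefore has a finite number of negative squares (in fact, here, zero), so $\C\in\mathcal{GPE}(\mathbb H)$, and $\C(p_j)=\C_j$ by construction.

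The main obstacle I anticipate is the interpolation value at $p_u$ itself: I wrote $P_u(p_u)=\C_u^{1/2}$, but the data $\C_u$ are arbitrary quaternionic matrices, not necessarily positive (indeed not even necessarily hermitian), so a genuine square root need not exist, and even when it does the naive identity $(P_u^\sharp\star P_u)(p_u)=P_u(p_u)^*P_u(p_u)$ fails in the quaternionic setting --- precisely the phenomenon flagged after Definition~\ref{notsosharp}, namely $\C^\sharp(p)\neq\C(p)^*$ when $p+\overline p=0$, and more generally the $\star$-product does not evaluate pointwise. The remedy, which I would carry out carefully, is \emph{not} to factor $\C_u$ but to prescribe the value of the product directly: use Lemma~\ref{tali} to build a polynomial $T_u$ with $T_u(p_j)=0$ for $j\neq u$, $T_u(p_u)=\C_u$, and $T_u(-\overline{p_v})=0$ for $v\neq u$, and then massage $\sum_u$ of a suitably normalized combination so that the even-ization $\tfrac12(T+T^\sharp)$ of $T=\sum_u T_u$ already interpolates the $\C_j$ at the $p_j$ (this uses that $T^\sharp(p_j)$ is controlled because $T$ vanishes at $-\overline{p_j}$, together with \eqref{inforce!!!} ensuring $p_j\neq-\overline{p_k}$); finally add a $\mathcal{GPE}(\mathbb H)$ polynomial vanishing at all the $p_j$ and $-\overline{p_j}$ (again available from Lemma~\ref{tali}) with a large enough positive real coefficient to push the whole thing into $\mathcal{GPE}(\mathbb H)$, exactly as in Example~2.15 in the complex case and as indicated in the paragraph following Corollary~2.14. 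The delicate point throughout is bookkeeping of the $\star$-product versus pointwise evaluation, and verifying that the no-three-on-a-sphere hypothesis survives when one enlarges the node set by the reflected points $-\overline{p_j}$; both are manageable but must be stated explicitly.
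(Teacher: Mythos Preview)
Your first approach contains a genuine error in the $\star$-product bookkeeping. You write that $(P_u^\sharp\star P_u)(p_j)=0$ for $j\neq u$ ``because the right factor already vanishes there and the $\star$-product of a function vanishing at $p_j$ on the right stays zero at $p_j$.'' This is exactly backwards: Lemma~\ref{jeanette} says that a zero of the \emph{left} factor propagates to the product, while a zero of the right factor need not (this is the remark immediately following the lemma). Similarly, your normalization $P_u(-\overline{p_u})=I_n$ is a \emph{left} evaluation, but what you need in order to apply \eqref{zebda} to $P_u^\sharp\star P_u$ at $p_u$ is $P_u^\sharp(p_u)=I_n$, and taking the adjoint shows this is a \emph{right} evaluation of $P_u$ at $-\overline{p_u}$; the two do not coincide over $\mathbb H$.

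The paper's proof repairs both issues with one move: it reverses the order of the factors, setting $\Phi(p)=\sum_u L_u(p)\star L_u^\sharp(p)$. Now $L_u$ is the left factor, so imposing the left-evaluation conditions $L_u(p_j)=0$ for $j\neq u$ and $L_u(p_u)=I_n$ makes Lemma~\ref{jeanette} and \eqref{zebda} apply directly. The remaining requirement $(L_u\star L_u^\sharp)(p_u)=\C_u$ then reduces, via \eqref{zebda}, to the single condition $L_u^\sharp(p_u)=\C_u$, which after conjugation is a right-evaluation condition on $L_u$ at $-\overline{p_u}$ --- precisely the two-sided setup of Lemma~\ref{tali}. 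No square root of $\C_u$ is ever needed, so the obstacle you identified simply disappears.

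Your fallback plan (even-ize an interpolant, then add a large multiple of a $\mathcal{GPE}(\mathbb H)$ polynomial vanishing at the nodes) is the second approach from the complex Section~\ref{INTER}, and the paper explicitly notes that extending it to the quaternionic setting does not seem possible. The difficulty is that membership in $\mathcal{GPE}(\mathbb H)$ is a negative-squares condition on the kernel \eqref{CCCC}, not a pointwise positivity condition on a boundary, so the ``add $\beta\Phi_0$ with $\beta$ large'' argument has no obvious quaternionic analogue.
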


\begin{proof}
Since $p_u$ and $-\overline{p_u}$ do not belong to the same sphere, using Lemma \ref{tali} we build for $u=1,\ldots, N$  a rational slice hyperholomorphic ${\mathbb H}^{n\times n}$-valued function
$L_u$ such that
\[
  L_u(p_j)=\begin{cases}\,\, 0_{n\times n},\quad j\not =u\\
    \,\, I_n,\hspace{4mm}\quad j=u,\end{cases}
\]
and
\[
L_u^\sharp(p_u)=\C_u.
\]
Then, \eqref{zebda} gives
\[
(L_u\star L_u^\sharp)(p_u)=L_u^\sharp(p_u)=\C_u,
\]
and
\[
\Phi(p)=  \sum_{u=1}^N
L_u(p)\star L_u^\sharp(p)
\]
answers the question.
\end{proof}

When  one has symmetric points, or purely imaginary points, among the interpolation nodes, that is $p_u+\overline{p_v}=0$ for some $u,v\in\left\{1,\ldots, N\right\}$
the above procedure needs to be adapted. One adds supplementary compatibility conditions. More precisely, we look for $L_u$ such that

\begin{equation}
L_u(p_j)=\begin{cases}\,\, 0_{n\times n},\quad j\not =u\\
 \,\, I_n,\hspace{4mm}\quad j=u,\end{cases}
\end{equation}

and
\[
L_u^\sharp(p_u)=\C_u .
\]
Let $L(p)=\sum_{k}p^kL_k$, with $L_k\in\mathbb H^{n\times n}$. The condition $L_u^\sharp(p_u)=\C_u$, i.e.
\[
\sum_{k}(-p_u)^kL_k=\C_u
\]
becomes after taking conjugate
\begin{equation}
\label{deb}
\sum_{k}L_k^*(-\overline{p_u})^k=\C_u^*.
\end{equation}
Equation \eqref{deb} is a right-sided interpolation condition, and following the results in \cite{bolotnikov2019lagrange} (see Proposition \ref{monique})
we need to solve the corresponding equation \eqref{tototo123321}, i.e.
\[
p_uX_{uv}+X_{uv}\overline{p_v}=I_n-\C_v^*.,\quad i.e.  \quad p_uX_{uv}-X_{uv}p_u=I_n-\C_v^*.
\]
Then,  using \eqref{zebda} we have
\[
(L\star L^\sharp)(p_u)=\sum_{v=0}^Np_u^vL_v=\C_u.
\]

\begin{remark}{\rm This paper dealt with  an aspect of quaternionic linear system theory. We refer to the papers \cite{MR733953,pereira2006quaternionic,MR2131920}
for other studies in this field, and to the papers \cite{acls_milan,acs1,acs-survey}
and books \cite{zbMATH06658818,zbMATH07183320}
for an approach to quaternionic linear system theory in connection with Schur analysis.\smallskip

In the arguments in this paper the uniqueness of the spectral factor played a key role.
We here remark that extending the above interpolation approach beyond the case of even functions, is out of the scope of this work, and
requires different methods. There are still whole sections of quaternionic linear system theory which remain to be developed.}
\end{remark}

\end{document}